\documentclass{amsart}
\usepackage[pdftex,pdftitle={The Dehn function of
  SL(n;Z)},pdfauthor={Robert Young}]{hyperref}

\usepackage{amssymb}
\usepackage{graphicx}
\usepackage{color}

\newcommand{\cM}{\ensuremath{\mathcal{M}}}
\DeclareMathOperator{\diagmat}{diag}
\DeclareMathOperator{\SL}{SL}
\DeclareMathOperator{\SO}{SO}

\DeclareMathOperator{\Lip}{Lip}

\DeclareMathOperator{\Sol}{Sol}
\DeclareMathOperator{\diam}{diam}
\DeclareMathOperator{\area}{area}
\DeclareMathOperator{\trace}{trace}

\newcommand{\sqD}{\ensuremath{D^2}}
\newcommand{\F}{\ensuremath{\mathbb{F}}}
\newcommand{\D}{\ensuremath{\mathcal{D}}}
\newcommand{\cE}{\ensuremath{\mathcal{E}}}
\newcommand{\cEH}{{\ensuremath{{\mathcal{E}}_H}}}
\newcommand{\cP}{\ensuremath{\mathcal{P}}}
\newcommand{\cS}{\ensuremath{\mathcal{S}}}
\newcommand{\K}{\ensuremath{\mathbb{K}}}
\newcommand{\N}{\ensuremath{\mathbb{N}}}
\newcommand{\ellw}{\ell}
\newcommand{\ellc}{\ell}
\newcommand{\R}{\ensuremath{\mathbb{R}}}
\newcommand{\Q}{\ensuremath{\mathbb{Q}}}
\newcommand{\Z}{\ensuremath{\mathbb{Z}}}
\newcommand{\trdf}[1]{\delta^{\text{tri}}_{#1}}
\newcommand{\reldehn}[2]{\delta^{\text{rel}}_{#1\subset #2}}
\newcommand{\short}[1]{\hat{#1}}
\newcommand{\emptyword}{\varepsilon}
\newcommand{\blog}{\overline{\log}}

\newtheorem{thm}{Theorem}[section]

\newtheorem{lem}[thm]{Lemma}
\newtheorem{lemma}[thm]{Lemma}
\newtheorem{prop}[thm]{Proposition}
\newtheorem{cor}[thm]{Corollary}
\theoremstyle{remark}
\newtheorem{rem}[thm]{Remark}
\newtheorem{conj}[thm]{Conjecture}
\newcommand{\U}{\ensuremath{\mathcal{U}}}

\author{Robert Young}
\address{University of Toronto\\
  Dept.\ of Mathematics\\
  40 St. George Street, Rm.\ BA6290\\
  Toronto, ON  M5S 2E4\\
  Canada } \date{\today} \title{The Dehn function of $\SL(n;\Z)$}
\email{ryoung@math.toronto.edu}

\begin{document}

\begin{abstract}
  We prove that when $n\ge 5$, the Dehn function of $\SL(n;\Z)$ is
  quadratic.  The proof involves decomposing a disc in
  $\SL(n;\R)/\SO(n)$ into triangles of varying sizes.  By mapping
  these triangles into $\SL(n;\Z)$ and replacing large elementary
  matrices by ``shortcuts,'' we obtain words of a particular form, and
  we use combinatorial techniques to fill these loops.
\end{abstract}

\maketitle

\bibliographystyle{halpha}

\section{Introduction}

The Dehn function of a group is a geometric invariant that measures
the difficulty of reducing a word that represents the identity to the
trivial word.  Likewise, the Dehn function of a space measures the
difficulty of filling a closed curve in a space with a disc.  If a
group acts properly discontinuously, cocompactly, and by isometries on
a space, then the Dehn functions of the group and space grow at the
same rate.  Thus, for example, since a curve in the plane can be
filled by a disc of quadratic area, the Dehn function of $\R^2$ grows
like $n^2$ and the Dehn function of $\Z^2$, which acts on the plane,
also grows like $n^2$.

Dehn functions can grow very quickly.  For example, if $\Sol_3$ is the
space consisting of the 3-dimensional solvable Lie group
$$\Sol_3=\left\{\begin{pmatrix}
    e^t & 0 & x \\
    0 & e^{-t} & y \\
    0 & 0 & 1 \\
  \end{pmatrix}\middle| x,y,t\in \R\right\},$$
with a left-invariant metric, then its Dehn function grows
exponentially.  One reason for this is that $\Sol_3$ is isomorphic to
a horosphere in
the rank 2 symmetric space $H^2\times H^2$, where $H^2$ is the
hyperbolic plane.  This space contains 2-dimensional
flats which intersect $\Sol_3$ in large loops.  Since they are
contained in flats, the loops have fillings in $H^2\times H^2$ of
quadratic area, but since these fillings go far from $\Sol_3$, they
are exponentially difficult to fill in $\Sol_3$.

Subsets of symmetric spaces of rank at least $3$ often have smaller
Dehn functions.  For example, $(H^2)^3$ has a horosphere isometric to
the solvable group
$$\Sol_5=\left\{\begin{pmatrix}
    e^{t_1} & 0 & 0 & x \\
    0 & e^{t_2} & 0 & y \\
    0 & 0 & e^{t_3} & z \\
    0 & 0 & 0 & 1 \\
  \end{pmatrix}\middle| x,y,z,t_i\in \R, t_1+t_2+t_3=0\right\}.$$
As before, there are flats in $(H^2)^3$ which intersect $\Sol_5$, but
since $(H^2)^3$ has rank $3$, the intersections may be spheres instead
of loops.  Indeed, loops contained in unions of flats have fillings
contained in unions of flats and $\Sol_5$ has a quadratic Dehn
function.  This result was first stated by Gromov \cite[5.A$_9$]{GroAII}; a proof of a more
general case along the lines stated here was given by Dru\c{t}u
\cite{DrutuFilling}.

This suggests that the filling invariants of subsets of symmetric
spaces depend strongly on rank.  Some of the main test cases for this
idea are lattices acting on high-rank symmetric spaces.  If a lattice
acts on a symmetric space with non-compact quotient, one can remove an
infinite union of horoballs from the space to obtain a space on which
the lattice acts cocompactly.  When the symmetric space has rank 2,
removing these horoballs may create difficult-to-fill holes in flats,
as in $\Sol_3$, but when the rank is 3 or more, Gromov conjectured 
\begin{conj}[{\cite[5.D(5)(c)]{GroAII}}]\label{conj:mainConj}
  If $\Gamma$ is an irreducible lattice in a symmetric space with $\R$-rank at
  least 3, then $\Gamma$ has a polynomial
  Dehn function.
\end{conj}
(see also \cite{BestEskWort} for a more general conjecture which
generalizes the Lubotzky-Mozes-Raghunathan theorem.)  A special case
of this conjecture is the following conjecture of Thurston (see
\cite{GerstenSurv}):
\begin{conj}
  When $p\ge 4$, $\SL(p;\Z)$ has a quadratic Dehn function.
\end{conj}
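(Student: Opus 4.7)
The plan is to prove the conjecture for all $n\ge 4$ by constructing fillings of quadratic area in three stages: a geometric stage in the symmetric space $X = \SL(n;\R)/\SO(n)$, a decomposition stage that triangulates filling discs at many scales, and a combinatorial stage that uses Steinberg relations to fill bounded-complexity loops in $\SL(n;\Z)$ itself.

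First, I would start from the fact that $X$ is non-positively curved, so any word $w$ of length $\ell$ representing the identity in $\SL(n;\Z)$ bounds a disc $D\subset X$ of area $O(\ell^2)$. The problem is that $\SL(n;\Z)$ acts cocompactly on the \emph{thick part} $X_{\ge\epsilon}$, obtained by removing a $\SL(n;\Z)$-invariant family of horoballs associated to rational parabolic subgroups; a generic $D$ will dip deep into these horoballs. I would triangulate $D$ by a scale-varying triangulation whose side-lengths shrink as one approaches horoball boundaries, so that each triangle, after being projected to $X_{\ge \epsilon}$, has uniformly bounded distortion. A Besicovitch/LMR-style estimate should keep the total area of the triangulated disc $O(\ell^2)$.

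Second, I would lift each projected triangle to a short relator in $\SL(n;\Z)$. Using the Iwasawa decomposition and the cocompact action on $X_{\ge\epsilon}$, each triangle becomes a word in the unipotent generators $e_{ij}(t)$ of $\SL(n;\Z)$, where the integer parameters $t$ encode how deeply the original triangle reached into a horoball and can therefore be as large as $e^{O(\ell)}$. The key move is to replace each large elementary matrix $e_{ij}(t)$ by a \emph{shortcut}: a word of length $O(\log t)$ obtained by expanding $t$ in some base and using the Steinberg commutator identity $[e_{ik}(a),e_{kj}(b)] = e_{ij}(ab)$ together with an auxiliary index $k\notin\{i,j\}$. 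After shortcutting, every triangle becomes a relator of length bounded by a constant (up to polylogarithmic factors that are absorbed by the fact that large-$t$ triangles are rare). These relators are then filled using the Steinberg presentation of $\SL(n;\Z)$, the $\SL(2;\Z)$-presentation for Weyl-element conjugators, and the combinatorial identities $e_{ij}(a)e_{ij}(b)=e_{ij}(a+b)$ and $[e_{ij}(a),e_{jk}(b)]=e_{ik}(ab)$; each fill has bounded area, so the total area is $O(\ell^2)$.

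The hard part — and the reason the conjecture is stated for $n\ge 4$ rather than being provable by the argument above uniformly — is the shortcut construction when $n=4$. For $n\ge 5$ one can choose the auxiliary index $k$ entirely outside the $3\times 3$ block in which a given triangle lives, so shortcuts commute with the rest of the word and the combinatorial fill decouples across triangles. With only four indices, any shortcut for $e_{ij}(t)$ shares its auxiliary index with some other generator of the triangle, and the commutator manipulations needed to slide the shortcut past those generators generate new large elementary matrices that themselves need shortcutting, threatening an unbounded recursion. Resolving this is the principal obstacle; I expect it to require a more delicate bookkeeping that exploits the $\SL(3;\Z)$-subgroups of $\SL(4;\Z)$ to provide ``virtual'' auxiliary indices — essentially, proving a quadratic filling inequality for certain parabolic subgroups of $\SL(4;\Z)$ first and then using those inequalities as building blocks to shortcut triangles without introducing uncontrolled new generators.
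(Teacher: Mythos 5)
You should be clear that the statement you were asked to prove is stated in the paper as a \emph{conjecture} (Thurston's), and the paper itself does not prove it in the generality given: the main theorem only establishes the quadratic bound for $n\ge 5$, and the case $n=4$ is left open. Your outline is, in broad strokes, the same strategy the paper uses for $n\ge 5$ (a quadratic filling in the non-positively curved space $\SL(n;\R)/\SO(n)$, an adaptive triangulation whose triangle sizes vary with depth in the cusp so that large triangles have labels in a parabolic subgroup, shortcuts $\short{e}_{ij}(t)$ of length $O(\log t)$ for large elementary matrices, and Steinberg-style combinatorics to fill the resulting loops). But your final paragraph concedes exactly the point at issue: for $n=4$ you offer only a speculation (``I expect it to require\dots''), not an argument. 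So as a proof of the statement as given there is a genuine gap --- the $n=4$ case is not proved, and indeed it is not proved in the paper either.

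Two further inaccuracies are worth flagging. First, your accounting of the combinatorial stage is too optimistic: after shortcutting, the triangle fills are not of bounded area, and there is no step where ``polylogarithmic factors are absorbed because large-$t$ triangles are rare.'' In the paper each shortcut relation (e.g.\ $\short{e}_{ij}(x)\short{e}_{ij}(y)\to\short{e}_{ij}(x+y)$, commutators, conjugations) is filled with area quadratic in the logarithms of the entries, i.e.\ quadratic in the edge lengths of the triangle, and the global quadratic bound comes from the adaptive triangulation's property $\sum_{\Delta}(\perim\Delta)^2=O(\ell^2)$, together with an induction over parabolic subgroups $U(S_1,\dots,S_k)$ and a separate argument for the exceptional case $U(p-1,1)$, which is handled by Lipschitz fillings in $\SL(p-1;\R)\ltimes\R^{p-1}$ (Dru\c{t}u's quadratic filling) and a second template. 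Second, your diagnosis of why $n=4$ is hard is not quite the paper's obstruction: the shortcuts are built inside solvable groups $H_{S,T}\cong(\R^{s-1}\times\R^{t-1})\ltimes(\R^S\otimes\R^T)$, and the manipulations require these (and related groups such as $\Sol_{2s-1}$) to have quadratic Dehn functions, which needs a block of size at least $3$; in addition the reduction lemmas require blocks of size at most $p-2$ with enough complementary room ($\#S\ge 3$, $\#T\ge 2$ in the conjugation lemma). For $p=4$ these size constraints cannot all be met simultaneously, and the relevant small solvable groups (e.g.\ $\Sol_3$) have exponential Dehn function, which is why the argument genuinely breaks rather than merely requiring more careful bookkeeping of auxiliary indices.
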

In this paper, we will prove Thurston's conjecture when $p\ge 5$:
\begin{thm}\label{thm:mainThm} When $p\ge 5$, $\SL(p;\Z)$ has a
  quadratic Dehn function.
\end{thm}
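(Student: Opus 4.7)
The plan is to convert a filling in the CAT(0) symmetric space $X = \SL(n;\R)/\SO(n)$ into a combinatorial filling in $\SL(n;\Z)$. Given a null-homotopic word $w$ of length $\ell$, view it as a loop $\gamma \subset X$ of length $O(\ell)$. Non-positive curvature of $X$ produces a Lipschitz disc $D$ bounding $\gamma$ with area $O(\ell^2)$. The difficulty is that $D$ need not lie in the thick part of $X$ on which $\SL(n;\Z)$ acts cocompactly: it may dip arbitrarily deep into the horoballs corresponding to parabolic cusps, where direct approximation by lattice elements is prohibitively expensive.

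To handle this, I would decompose $D$ by a geodesic triangulation whose scale adapts to how deep each region penetrates a horoball. Triangles in the thick part would have bounded diameter, while triangles sitting well inside a horoball would be allowed to be much larger, exploiting the fact that symmetric-space distance into a horoball is only logarithmic in the corresponding unipotent coordinate. Approximating each vertex by a nearby element of $\SL(n;\Z)$ and reading off each edge as a product of elementary matrices $e_{ij}(t)$ produces, for each triangle, a closed word in $\SL(n;\Z)$. When the triangle is deep in a horoball the exponents $|t|$ are exponentially large in the depth, but the word length of $e_{ij}(t)$ in $\SL(n;\Z)$ is still only $O(\log|t|)$: this is achieved by iterating the Steinberg-type identity $[e_{ik}(a),e_{kj}(b)] = e_{ij}(ab)$ to build short ``shortcut'' expressions representing $e_{ij}(t)$.

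Once each edge has been replaced by a shortcut, the boundary of each triangle becomes a loop in $\SL(n;\Z)$ whose length is comparable to the triangle's intrinsic diameter in $X$. Filling these shortcutted loops combinatorially via Steinberg relations and assembling the results across the $O(\ell^2)$ triangles of $D$ (weighted by the scale-dependent sizes) should yield a van Kampen diagram for $w$ of total area $O(\ell^2)$. The hypothesis $n \ge 5$ would enter in the most delicate shortcuts: those require splitting $\{1,\dots,n\}\setminus\{i,j\}$ into two disjoint index blocks large enough to support independent copies of $\SL(k;\Z)$-style commutator manipulations, and $n \ge 5$ is what guarantees enough room.

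The main obstacle is the last combinatorial step: showing that the shortcutted loops admit genuinely quadratic fillings. Naively expanding the nested commutators that define a shortcut gives area polylogarithmic per triangle, and this aggregates to a super-quadratic total across the disc. The crux will be to isolate a restricted family of ``nice'' words---products and commutators of shortcuts of a controlled form---that both captures the loops produced by the triangulation and admits fillings whose area is essentially linear in the word length. Building this combinatorial calculus inside $\SL(n;\Z)$, and verifying that the triangulation outputs only nice words, is where the real work of the proof lies.
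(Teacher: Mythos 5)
Your outline follows the same strategy as the paper: fill the loop in the symmetric space, triangulate the disc adaptively so that triangle size grows with depth into the cusp, label vertices by lattice elements, replace large elementary matrices by logarithmic-length shortcuts, and fill the resulting shortcut words combinatorially. But the step you defer --- "building this combinatorial calculus" --- is not a detail; it is the bulk of the proof, and your framing of what is needed there is off in two ways. First, you do not need per-triangle fillings of area "essentially linear in the word length": the adaptive triangulation can be built so that $\sum_i (\mathrm{perim}\,\Delta_i)^2 = O(\ell^2)$, so fillings of each $\omega$-triangle with area \emph{quadratic} in its perimeter already aggregate to $O(\ell^2)$; your worry that polylogarithmic-area triangle fillings force a super-quadratic total is misplaced. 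Second, and more importantly, the geometric step must deliver more than "exponents are large but shortcuts are short": one needs that for every large triangle all three vertex labels differ by elements of a \emph{single} proper parabolic $U(k,n-k)$ (this comes from a quantitative reduction-theory statement about which lattice translates of a Siegel set can meet a ball deep in the cusp). Without that, there is no structure to induct on and no reason the shortcut words admit quadratic fillings.

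Concretely, the missing machinery is: (a) fillings of the "shortened Steinberg relations" --- e.g. transforming $\short{e}_{ij}(x)\short{e}_{ij}(y)$ into $\short{e}_{ij}(x+y)$, or commuting/conjugating shortcuts --- with area $O((\log|x|+\log|y|)^2)$; in the paper these come from solvable-by-abelian groups $H_{S,T}=(\R^{s-1}\times\R^{t-1})\ltimes(\R^S\otimes\R^T)$ that lie in the thick part and have quadratic Dehn function when one of the blocks has size at least $3$, and this --- needing $\#S\ge 3$ or $\#T\ge 3$ while both blocks used in the parabolic reduction have size at most $n-2$ --- is the precise place where $n\ge 5$ enters, rather than your rough "enough room for two blocks" heuristic. (b) A reduction of $\omega$-triangles with vertices in $U(S_1,\dots,S_k)$ to words in the diagonal blocks $\SL(S_i;\Z)$, which fails for the parabolic $U(n-1,1)$ and requires a separate argument: one uses the quadratic filling of $\SL(n-1;\R)\ltimes\R^{n-1}$ (Dru\c{t}u), upgraded to Lipschitz discs, to build a second adaptive template inside that group. (c) A dedicated quadratic filling for words in the shortcut generators of the $\SL(2;\Z)$ blocks, where the Lubotzky--Mozes--Raghunathan undistortion fails and one must again work with a template whose boundary curve leaves the thick part. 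Without (a)--(c) and the induction on block size that ties them together, the proposal is a plausible plan but not a proof.
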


When $p$ is small, the Dehn function of $\SL(p;\Z)$ is known; when
$p=2$, the group $\SL(2;\Z)$ is virtually free, and thus hyperbolic.
As a consequence, its Dehn function is linear.  When $p=3$, Epstein
and Thurston \cite[Ch.\ 10.4]{ECHLPT} proved that the Dehn function of
$\SL(3;\Z)$ grows exponentially; Leuzinger and Pittet generalized this
result to any non-cocompact lattice in a rank $2$ symmetric space
\cite{LeuzPitRk2}.  This exponential growth has applications to
finiteness properties of arithmetic groups as well; Bux and Wortman
\cite{BuxWortman} describe a way that the constructions in
\cite{ECHLPT} lead to a proof that $\SL(3;\F_q[t])$ is not finitely
presented (this fact was first proved by Behr \cite{Behr77}), then generalize to a large class of lattices in reductive
groups over function fields.  The previous best known bound for the
Dehn function of $\SL(p;\Z)$ when $p\ge 4$ was exponential; this
result is due to Gromov, who sketched a proof that the Dehn function
of $\Gamma$ is bounded above by an exponential function
\cite[5.A$_7$]{GroAII}.  A full proof of this fact was given by
Leuzinger \cite{LeuzingerPolyRet}.

Notable progress toward Conj.~\ref{conj:mainConj} was made by
Dru\c{t}u \cite{DrutuFilling} in the case that $\Gamma$ is a lattice
in $G$ with $\Q$-rank 1.  In this case, $\Gamma$ acts cocompactly on a
subset of $G$ constructed by removng infinitely many disjoint
horoballs.  Dru\c{t}u showed that if $G$ has $\R$-rank 3 or greater,
then the boundaries of these horoballs satisfy a quadratic filling
inequality and that if $\Gamma$ has $\Q$-rank 1, then it enjoys an
``asymptotically quadratic'' Dehn function, i.e., its Dehn function is
bounded by $n^{2+\epsilon}$ for any $\epsilon>0$.  More recently,
Bestvina, Eskin, and Wortman \cite{BestEskWort} have made progress
toward a higher-dimensional generalization of
Conj.~\ref{conj:mainConj} by proving filling estimates for
$S$-arithmetic lattices.

The basic idea of the proof of Theorem~\ref{thm:mainThm} (we will give
a more detailed sketch in Sec.~\ref{sec:sketchProof}) is to use
fillings of curves in the symmetric space $\SL(p;\R)/\SO(p)$ as
templates for fillings of words in $\SL(p;\Z)$.  Fillings which lie in
the thick part of $\SL(p;\R)/\SO(p)$ correspond directly to fillings
in $\SL(p;\Z)$, but in general, an optimal filling of a curve in the
thick part may have to go deep into the cusp of $\SL(p;\Z)\backslash
\SL(p;\R)/\SO(p)$.  Regions of this cusp correspond to parabolic
subgroups of $\SL(p;\Z)$, so we develop geometric
techniques to cut the filling into pieces which each lie in one such
region.  This reduces the problem of filling the original word to the
problem of filling words in parabolic subgroups of $\Gamma$.  This
step is fairly general, and these geometric techniques may be applied
to a variety of groups.  We fill these words using combinatorial
techniques, especially the fact that $\Gamma$ contains many
overlapping solvable subgroups.  This step is specific to $\SL(p;\Z)$
and is the step that fails in the case $p=4$.

In Section~\ref{sec:prelims}, we define some of the notation and
concepts that will be used in the rest of the paper.  Readers who are
already familiar with Dehn functions may wish to skip parts of this
section, but note that Sec.~\ref{subsec:slpr} introduces much of the
notation we will use to describe subgroups and elements of $\SL(p)$,
and that Sec.~\ref{subsec:templates} introduces the new notion of
``templates'' for fillings.  

In Section~\ref{sec:sketchProof}, we give an outline of the proof of
Theorem~\ref{thm:mainThm}.  This outline reduces
Theorem~\ref{thm:mainThm} to a series of lemmas which decompose words
in $\SL(p;\Z)$ into words in smaller and smaller subgroups of
$\SL(p;\Z)$.  In Section~\ref{sec:redParaProof} we describe the main
geometric technique: a method for decomposing words in $\SL(p;\Z)$
into words in maximal parabolic subgroups.  Then, in
Sections~\ref{sec:normalForms} and \ref{sec:shortManip}, we describe a
normal form for elements of $\SL(p;\Z)$ and prove several
combinatorial lemmas giving ways to manipulate this normal form.
Finally, in Sections~\ref{sec:redDiagProof} and
\ref{sec:baseCaseProof}, we apply these techniques to prove the
lemmas, and in Section~\ref{sec:open}, we ask some open questions.


Some of the ideas in this work were inspired by discussions at the
American Institute of Mathematics workshop, ``The Isoperimetric
Inequality for $\SL(n;\Z)$,'' and the author would like to thank the
organizers, Nathan Broaddus, Tim Riley, and Kevin Wortman; and
participants, especially Mladen Bestvina, Alex Eskin, Martin Kassabov,
and Christophe Pittet.  The author would also like to thank Tim Riley,
Yves de Cornulier, Kevin Wortman, and Mladen Bestvina for many helpful
conversations while the author was visiting Bristol University,
Universit\'e de Rennes, and University of Utah.  Parts of this paper
were completed while the author was a visitor at the Institut des
Hautes \'Etudes Scientifiques and a Courant Instructor at New York University.

\section{Preliminaries}\label{sec:prelims}
In this section, we will describe some of the concepts and notation we
will use throughout this paper.

We use a variant of big-O notation throughout this paper; the notation
$$f(x,y,\dots)=O(g(x,y,\dots))$$ means that there is a $c>0$
such that $|f(x,y,\dots)|\le c g(x,y,\dots)+c$ for all values of the
parameters.  In most cases, $c$ will also depend implicitly on $p$.

If $f:X\to Y$ is Lipschitz, we say that $f$ is $c$-Lipschitz if
$d(f(x),f(y))\le cd(x,y)$ for all $x,y\in X$, and we let $\Lip(f)$ be
the infimal $c$ such that $f$ is $c$-Lipschitz.


\subsection{Words and curves}
If $G$ is a group with finite generating set $S$, we call a formal
product of elements of $S$ and their inverses a {\em word} in $S$.  By
abuse of notation, we will also call this a word in $G$ and leave $S$
implicit.  We denote the empty word by $\emptyword$.  There is a
natural evaluation map taking words in $G$ to $G$, and we say that a
word {\em represents} its corresponding group element.  If
$w=s_1^{\pm1}\dots s_n^{\pm1}$, we say that $w$ has length $\ell(w)=n$.

If $G$ acts on a space $X$, words in $G$ correspond to
curves in $X$.  Let $X$ be a connected simplicial
complex or riemannian manifold and let $G$ act on $X$ by maps of
simplicial complexes or by isometries, respectively.  Let $S$ be a
finite generating set for $G$ and for all $s\in S$, let
$\gamma_s:[0,1]\to X$ be a curve connecting $x_0$ to $sx_0$.  Let
$\gamma_s^{-1}$ be the same curve with the reverse parameterization.
If $w=s_1^{\pm1}\dots s_n^{\pm1}$ is a word in $S$ which represents
$g$, we can construct a curve $\gamma_w$ in $X$ by concatenating
translates of the $\gamma_{s_i}^{\pm1}$'s.  The resulting
curve connects $x_0$ and $gx_0$ and its length is bounded by the
length of $w$:
$$\ell(\gamma_w)\le \ell(w) \max_{s\in S} \ell(\gamma_s).$$

\subsection{Dehn functions and the Filling Theorem}

A full introduction to the Dehn function can be found in \cite{Bridson}.
We will just summarize some necessary results and notation here.  
If
$$G=\langle h_1,\dots,h_d \mid r_1,\dots,r_s\rangle$$ 
is a finitely presented group and $w$ is a word representing the
identity, there is a sequence of steps which reduces $w$ to the empty word,
where each step is a free reduction or insertion or the application of
a relator.  We call the number of applications of
relators in a sequence its {\em cost}, and we call the minimum cost of
a sequence which reduces $w$ to $\emptyword$ the {\em
  filling area} of $w$, denoted by $\delta_G(w)$.  We then define the
{\em Dehn function} of $G$ to be
$$\delta_G(n)=\max_{\ellw(w)\le n} \delta_G(w),$$
where the maximum is taken over words representing the identity.  For
convenience, if $v,w$ are two words representing the same element of
$H$, we define $\delta_G(v,w)=\delta_G(vw^{-1})$; this is the
minimum cost to transform $v$ to $w$.

Likewise, if $X$ is a simply-connected riemannian manifold or
simplicial complex (more generally a local Lipschitz neighborhood
retract) and $\gamma:S^1\to X$ is a Lipschitz closed curve, we define
its filling area $\delta_X(\gamma)$ to be the infimal area of a
Lipschitz map $D^2\to X$ which extends $\gamma$.  We can define the
Dehn function of $X$ to be
$$\delta_X(n)=\sup_{\ellc(\gamma)\le n} \delta_X(\gamma),$$
where the supremum is taken over null-homotopic closed curves.  As in
the combinatorial case, if $\beta$ and $\gamma$ are two curves
connecting the same points and which are homotopic with their
endpoints fixed, we define $\delta_X(\beta,\gamma)$ to be the infimal
area of a homotopy between $\beta$ and $\gamma$ which fixes their
endpoints.

Note that combinatorial fillings can be converted into geometric
fillings.  Gromov stated the following theorem connecting geometric and
combinatorial Dehn functions, a proof of which can be found in
\cite{Bridson}.
\begin{thm}[Gromov's Filling Theorem]\label{thm:GroFill}
  If $X$ is a simply connected riemannian manifold or simplicial
  complex and $G$ is a finitely presented group acting properly
  discontinuously, cocompactly, and by isometries on $M$, then
  $\delta_G\sim \delta_M$.
\end{thm}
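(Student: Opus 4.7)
The plan is to prove $\delta_H \sim \delta_X$ by exhibiting the standard Milnor--Schwarz quasi-isometry between $H$ and $X$ and transporting fillings in both directions with only multiplicative and additive constant loss in length and in area. Fix a basepoint $x_0 \in X$ and a compact set $K \subset X$ whose $H$-translates cover $X$; by proper discontinuity and cocompactness, the set $S = \{h \in H : hK \cap K \ne \emptyset\}$ is finite and generates $H$. For each $s \in S$ choose a curve $\gamma_s$ from $x_0$ to $sx_0$ of length at most a fixed $L$, so that the concatenation construction of Section~\ref{sec:wordscurves} converts a word of length $n$ into a curve in $X$ of length $O(n)$; conversely, any $1$-dense sample $p_0,\dots,p_N$ of a length-$n$ curve in $X$ yields, by choosing $g_i \in H$ with $g_i x_0$ nearest $p_i$, a word in $S$ of length $O(n)$ representing the same element.

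\textbf{From combinatorial to geometric fillings.} Given a null-homotopic word $w$ of length $n$, its associated curve $\gamma_w$ has length $O(n)$. A cost-$A$ sequence of relator applications reducing $w$ to $\emptyword$ assembles into a van Kampen diagram with $A$ faces. Mapping this diagram equivariantly into $X$, by sending each vertex $g$ to $gx_0$, each edge $g \to gs$ to the translate $g\gamma_s$, and each $2$-cell to a fixed Lipschitz disc filling its (bounded-length) boundary curve, produces a Lipschitz extension of $\gamma_w$ whose area is $O(A + n)$. This gives $\delta_X(n) \le C\delta_H(Cn) + Cn$.

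\textbf{From geometric to combinatorial fillings.} Conversely, given a null-homotopic word $w$ of length $n$, the curve $\gamma_w$ admits a Lipschitz filling $f : \sqD \to X$ of area at most $\delta_X(O(n))$. Triangulate $\sqD$ so that each image triangle has diameter at most $\diam(K)$ and the triangulation on $\partial\sqD$ refines the subdivision coming from $w$; using the bounded geometry of $X$ inherited from the cocompact action, this can be arranged with $O(\area(f) + n)$ triangles. Label each vertex $v$ by some $g_v \in H$ with $g_v x_0$ within $\diam(K)$ of $f(v)$, keeping the boundary labels already dictated by $w$. The boundary of each triangle is then a word of length at most $3$ in $S$ representing the identity, which equals a product of a uniformly bounded number of relators. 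Summing over triangles produces a combinatorial filling of $w$ of cost $O(\area(f) + n)$, so $\delta_H(n) \le C\delta_X(Cn) + Cn$.

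\textbf{Main obstacle.} The delicate step is the triangulation in the second direction: one must approximate a merely Lipschitz disc by a simplicial map without inflating area by more than a constant factor. In the simplicial case this reduces to standard simplicial approximation after a barycentric subdivision whose mesh is chosen by the Lebesgue number of the pullback of a uniform cover of $X$. For a Riemannian $X$ one exploits cocompactness to cover $X$ by charts of uniformly bounded distortion and triangulates $\sqD$ adapted to the pullback of this cover, so that the $2$-simplex count is bounded by a constant times $\area(f) + \ellc(\gamma_w)$. Once this approximation is secured, the labelling and relator-by-relator accounting are bookkeeping, and the two inequalities above combine to yield $\delta_H \sim \delta_X$.
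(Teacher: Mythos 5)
Your overall architecture is the standard one (the paper does not prove this theorem itself; it cites \cite{Bridson} and \cite{BurTab}, and isolates the needed content in Lemma~\ref{lem:approx}), but the step you flag as the ``main obstacle'' is in fact the entire content of the theorem in the geometric-to-combinatorial direction, and the mechanism you propose for it does not work. A triangulation of $\sqD$ chosen by a Lebesgue-number/mesh argument adapted to the pullback of a uniform cover of $X$ has its number of $2$-simplices controlled by the size of the domain and the Lipschitz constant of $f$, not by $\area(f)+\ellc(\gamma_w)$. A near-minimal Lipschitz filling can be badly behaved: its image may contain long one-dimensional ``tendrils'' or high-frequency oscillations that contribute nothing to area (and nothing to boundary length), yet force arbitrarily many triangles of bounded image diameter. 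So the assertion ``this can be arranged with $O(\area(f)+n)$ triangles'' is exactly the nontrivial claim, and simplicial approximation after barycentric subdivision, or charts of bounded distortion, do not yield it. The correct tool is the Federer--Fleming Deformation Lemma \cite{FedFlem} or the Cellulation Lemma of \cite[5.2.3]{Bridson}: one pushes the Lipschitz disc into the $2$-skeleton of a suitable equivariant complex by radial projections from generic centers, which collapses the low-area parts into the $1$-skeleton, so that the number of $2$-cells of the resulting cellular (van Kampen) filling is genuinely bounded by a constant times $\area(f)+\ellc(\gamma_w)$. This is precisely what the paper invokes as Lemma~\ref{lem:approx}(2); without it, your second inequality $\delta_H(n)\le C\delta_X(Cn)+Cn$ is not established.

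There is also a smaller gap in the other direction: $\delta_X$ is defined over \emph{all} null-homotopic Lipschitz curves of length at most $n$, whereas your argument only fills curves of the form $\gamma_w$ arising from words. To conclude $\delta_X(n)\le C\delta_H(Cn)+Cn$ you must first approximate an arbitrary curve $s$ by a word $w$ (your $1$-dense sampling) and then fill the annulus between $s$ and $f\circ\gamma_w$ with area $O(\ellc(s))$; this is the content of Lemma~\ref{lem:approx}(1) and uses simple connectedness together with the uniformly bounded filling areas of short loops coming from cocompactness. That repair is routine, but it should be stated; the deformation/cellulation step, by contrast, is a genuinely missing idea in your write-up.
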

Here, $f \sim g$ if $f$ and $g$ grow at the same rate.  Specifically,
if $f,g:\N\to \N$, let $f\lesssim g$ if and only if there is a $c$
such that
$$f(n)\le c g(cn+c)+c\text{ for all }n$$
and $f\sim g$ if and only if $f\lesssim g$ and $g\lesssim f$.

\subsection{$\SL(p;\R)$ and $\SL(p;\Z)$}\label{subsec:slpr}

Let $\Gamma=\SL(p;\Z)$ and let $G=\SL(p)=\SL(p;\R)$.  One of the main
geometric features of $G$ is that it acts on a non-positively curved
symmetric space which we denote by $\cE$.  Let $\cE=\SL(p;\R)/\SO(p)$.
We consider $\cE$ with the metric obtained from the inner product
$\langle u,v\rangle=\trace(u^{tr}v)$ on the space of symmetric
matrices.  Under this metric, $\cE$ is a non-positively curved
symmetric space.  The lattice $\Gamma$ acts on $\cE$ with finite
covolume, but the action is not cocompact.  Let $\cM:=\Gamma\backslash
\cE$.  If $x\in G$, we write the equivalence class of $x$ in $\cE$ as
$[x]_\cE$; similarly, if $x\in G$ or $x\in \cE$, we write the
equivalence class of $x$ in $\cM$ as $[x]_\cM$.

If $g\in G$ is a matrix with coefficients $\{g_{ij}\}$, we define
$$\|g\|_2=\sqrt{\sum_{i,j}g_{ij}^2},$$
$$\|g\|_\infty=\max_{i,j}|g_{ij}|.$$
Note that for all $g,h\in G$, we have $\log \|g\|_2 = O(d_G(I,g)).$

One key fact about the geometry of $\SL(p;\Z)$ is a theorem of
Lubotzky, Mozes, and Raghunathan \cite{LMRComptes}:
\begin{thm}\label{thm:LMR}
  The word metric on $\SL(p;\Z)$ for $p\ge 3$ is equivalent to the
  restriction of the riemannian metric of $\SL(p;\R)$ to $\SL(p;\Z)$.
  That is, there is a $c$ such that for all $g\in \SL(p;\Z)$, we have
  $$c^{-1} d_G(I,g)\le d_{\Gamma}(I,g)\le c d_G(I,g).$$
\end{thm}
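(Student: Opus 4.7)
The inequality $d_G(I,g)\le c\, d_\Gamma(I,g)$ is immediate from the setup: each generator $s\in\Sigma$ is associated with a curve in $\cE(1/2)$ of length at most $c_\Sigma$ connecting $[I]_\cE$ to $[s]_\cE$, so any word $w$ of length $\ellw(w)$ representing $g$ traces out a curve of length at most $c_\Sigma\ellw(w)$, giving $d_G(I,g)\le c_\Sigma\, d_\Gamma(I,g)$.

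The nontrivial direction is $d_\Gamma(I,g)\le c\, d_G(I,g)$. The preliminaries give $\log\|g\|_2=O(d_G(I,g))$, and the reverse bound $d_G(I,g)=O(\log\|g\|_2)$ holds on $\Gamma$ (from the singular value decomposition, together with the cofactor estimate $\|g^{-1}\|_\infty\le (p-1)!\,\|g\|_\infty^{p-1}$ which bounds the small singular values), so it suffices to produce for each $g\in\Gamma$ a word in $\Sigma$ of length $O(\log\|g\|_2)$ representing $g$. My plan is to first perform a controlled integer Bruhat/Hermite reduction, expressing $g$ as a bounded product of factors of the form $u_\pm\cdot d\cdot w$, where $u_\pm$ are unipotent integer matrices whose entries are bounded polynomially in $\|g\|_2$, $d\in D$ is a diagonal matrix of bounded size, and $w$ is a Weyl permutation. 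This reduces the problem to showing that a unipotent integer matrix $u$ with $\|u\|_\infty\le M$ admits a word in $\Sigma$ of length $O(\log M)$.

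For unipotent matrices, the Steinberg commutator relation $[e_{ik}(a),e_{kj}(b)]=e_{ij}(ab)$ allows one to assemble large elementary matrices from smaller ones by nested commutators, and the hypothesis $p\ge 3$ is needed precisely to provide the third index $k$. However, the naive doubling recursion that this relation suggests yields only an $O((\log n)^2)$ word-length bound for $e_{ij}(n)$, whereas the theorem demands $O(\log n)$. Closing this gap is the main obstacle, and it is where the deeper content of Lubotzky--Mozes--Raghunathan \cite{LMRComptes} really lies: one must exploit the \emph{higher-rank} structure of $\Gamma$, conjugating short unipotent words by carefully chosen hyperbolic elements of embedded $\SL(2;\Z)$-subgroups so that each conjugation rescales elements of a root subgroup multiplicatively by a fixed factor, and then telescoping $O(\log n)$ such conjugations to produce $e_{ij}(n)$ with exactly the linear cost. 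Verifying that these local constructions patch together globally -- for instance via Mahler's compactness criterion applied to a suitable fundamental domain in $\cM$ to uniformize the choice of hyperbolic elements across all $g$ -- is the crux of the argument and is where essentially all of the technical work would lie.
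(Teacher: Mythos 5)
First, a point of comparison: the paper does not prove Theorem~\ref{thm:LMR} at all --- it is quoted from Lubotzky--Mozes--Raghunathan \cite{LMRComptes}, and the only ingredient the paper reconstructs is the ``shortcut'' construction of Lemma~\ref{lem:shortcuts}, producing words of length $O(\log|x|)$ representing $e_{ij}(x)$. So your attempt has to stand on its own, and as written it does not: your easy direction and the reduction to ``find a word of length $O(\log\|g\|_2)$ representing $g$'' are fine, but the hard direction is left unproved, and you have the relative difficulty of its two halves backwards.

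The step you present as the main obstacle --- getting $e_{ij}(n)$ in word length $O(\log n)$ rather than the $O((\log n)^2)$ of naive commutator doubling --- is in fact the easy, standard part: for $p\ge 3$ embed $\SL(2;\Z)\ltimes\Z^2$ (or $\Z\ltimes\Z^2$) into $\SL(p;\Z)$, fix one hyperbolic element $A$, write the vector $n z_i$ as $\sum_k A^k u_k$ with the $u_k$ bounded, and telescope; no Mahler compactness or ``uniformization across all $g$'' is needed, and this is exactly what Lemma~\ref{lem:shortcuts} carries out. The genuine content of the theorem is the step you dispose of in one sentence: the ``controlled integer Bruhat/Hermite reduction'' expressing an arbitrary $g\in\SL(p;\Z)$ as a product of a \emph{bounded} number of integral unipotent factors with entries polynomially bounded in $\|g\|_2$. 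Over $\Z$ this is not a routine normal-form computation: Bruhat/LU factorizations are only rational, and Hermite/Gaussian elimination produces a number of elementary factors growing like $\log\|g\|_2$, which after replacing each factor by its logarithmic shortcut yields only the classical quadratic bound $d_\Gamma(I,g)=O\bigl((\log\|g\|_2)^2\bigr)$ --- precisely the estimate the theorem improves on. Proving the bounded decomposition with norm control (bounded generation by unipotent/horospherical subgroups with polynomial bounds) is where the work of \cite{LMRComptes} actually lies, and your proposal neither proves it nor reduces it to anything proved; you yourself concede that ``essentially all of the technical work'' remains. So the proposal establishes only the quadratic upper bound, not the linear one claimed in the theorem.
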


We define subset of $G$ on which $\Gamma$ acts cocompactly by
interpreting $\cE$ as the set of unimodular bases of $\R^p$ up to
rotation; if $v_1,\dots,v_n$ are the rows of a matrix $g$, then the
point $[g]_\cE$ corresponds to the basis $\{v_1,\dots,v_n\}$.  An
element of $\Gamma$ acts on $\cE$ by replacing the basis elements by
integer combinations of basis elements.  This preserves the lattice
that they generate, so we can think of $\cM$ as the set of
unit-covolume lattices in $\R^p$ up to rotation.  Nearby points in
$\cM$ or $\cE$ correspond to bases or lattices which can be taken into
each other by small linear deformations of $\R^p$.  Note that this set
is not compact -- for instance, the injectivity radius of a lattice is
a positive continuous function on $\cM$, and there are lattices with
arbitrarily small injectivity radiuses.

Let $\cE(\epsilon)$ be the set of points which correspond to lattices
with injectivity radius at least $\epsilon$.  This is invariant under
$\Gamma$, and when $0<\epsilon \le 1/2$, it is contractible and
$\Gamma$ acts on it cocompactly \cite{ECHLPT}.  We call
$\cE(\epsilon)$ the {\em thick part} of $\cE$, and its preimage
$G(\epsilon)$ in $G$ the thick part of $G$.  ``Thick'' here refers to
the fact that the quotients $\Gamma\backslash \cE(\epsilon)$ and
$\Gamma\backslash G(\epsilon)$ have injectivity radius bounded below.

Epstein, et al. construct a Lipschitz deformation retraction from
$\cE$ to $\cE(\epsilon)$, so $\cE(\epsilon)$ is a local Lipschitz
neighborhood retract in $\cE$.  
The results of \cite{GroftA} imply that Gromov's Filling Theorem
extends to such retracts, so proving a filling inequality for $\Gamma$
is equivalent to proving one for $\cE(\epsilon)$.

We will also define some subgroups of $G$.  In the following, $\K$
represents either $\Z$ or $\R$.  Let $z_1,\dots,z_p$ be the standard
generators for $\Z^p$, and if $S\subset \{1,\dots,p\}$, let
$\R^S=\langle z_s\rangle_{s\in S}$ be a subspace of $\R^p$.  If $q\le
p$, there are many ways to include $\SL(q)$ in $\SL(p)$.  Let $\SL(S)$
be the copy of $\SL(\#S)$ in $\SL(p)$ which acts on $\R^S$ and fixes
$z_t$ for $t\not \in S$.  If $S_1,\dots,S_n$ are disjoint subsets of
$\{1,\dots, p\}$ and $S:=\bigcup S_i$, let
$$U(S_1,\dots,S_n)\subset \SL(S;\Z)$$
be the subgroup of matrices preserving the flag
$$\R^{S_i}\subset \R^{S_i\cup S_{i-1}} \subset \dots\subset \R^S $$
when acting on the right.  If the $S_i$ are sets of consecutive
integers in increasing order, $U(S_1,\dots,S_n)$ is block upper-triangular.  For example, $U(\{1\},\{2,3\})$ is the subgroup of
$\SL(3;\K)$ consisting of matrices of the form:
$$\begin{pmatrix}
  * & *  & * \\
  0 & *  & * \\
  0 & *  & *
\end{pmatrix}.$$ If $d_1,\dots d_n>0$, let $U(d_1,\dots,d_n)$ be
the group of upper block triangular matrices with blocks of the given
lengths, so that the subgroup illustrated above is $U(1,2)$.  If
$\sum_i d_i=p$, this is a parabolic subgroup of $\Gamma$; if $\sum_i
d_i<p$ then it is a parabolic subgroup of $\SL(\sum_i d_i;\Z)$.  Let
$\cP$ be the set of groups $U(d_1,\dots,d_n)$ with $\sum_i d_i=p$,
including $U(p)=\Gamma$.  Any parabolic subgroup of $\Gamma$ is
conjugate to a unique such group.

One feature of $\SL(p;\Z)$ is that it has a particularly simple
presentation, the Steinberg presentation.
If $1\le i\ne j\le p$, let $e_{ij}(x)\in \Gamma$ be the matrix which
consists of the identity matrix with the $(i,j)$-entry replaced by
$x$; we call these {\em elementary matrices}.  Let
$e_{ij}:=e_{ij}(1)$.  When $p\ge 3$, there is a finite presentation which has the
matrices $e_{ij}$ as generators \cite{Steinberg, Milnor}:
\label{sec:origSteinberg}
\begin{align}
  \notag  \Gamma=\langle e_{ij} \mid \; &[e_{ij},e_{kl}]=I & \text{if $i\ne l$ and $j\ne k$}\\
  & [e_{ij},e_{jk}]=e_{ik} & \text{if $i\ne k$}\label{eq:steinberg}\\
  \notag  & (e_{ij} e_{ji}^{-1} e_{ij})^4=I \rangle,
\end{align}
where we adopt the convention that $[x,y]=xyx^{-1}y^{-1}$.  
We will use a slightly expanded set of generators.  Let
$$\Sigma:=\{e_{ij}\mid 1\le i\ne j\le p\}\cup D,$$
where $D\subset \Gamma$ is the set of diagonal matrices in
$\SL(p;\Z)$; note that this set is finite.  If $R$ is the set of relators given
above with additional relations expressing each element of $D$ as a
product of elementary matrices, then $\langle \Sigma\mid R\rangle$ is
a finite presentation of $\Gamma$ with relations $R$.  Furthermore, if
$H=\SL(q;\Z)\subset \SL(p;\Z)$ or if $H$ is a subgroup of
block-upper-triangular matrices, then $H$ is generated by $\Sigma \cap
H$.

\subsection{Templates and relative Dehn functions}\label{subsec:templates}

In this section, we introduce some new definitions which we will use
in the proof of Thm.~\ref{thm:mainThm}.

The \emph{relative Dehn function}, $\reldehn{H}{G}$, of a subgroup
$H\subset G$ describes the difficulty of filling words in $H$ by discs
in $G$.  If $G=\{S|R\}$ is a finite presentation for $G$ and
$S_0\subset S$ is a generating set for $H$ and $S_0^*$ represents the
set of words in $S_0$, we define
$$\reldehn{H}{G}(n)=\max_{w\in S_0^*,\ellw(w)\le n} \delta_G(w).$$
By definition, $\reldehn{G}{G}(n)=\delta_G(n).$

If $\omega:G\to S^*$ is a map such that for all $g$, $\omega(g)$ is a
word representing $g$ which has length $\sim \ellw(g)$, we say that
$\omega$ is a \emph{normal form} for $G$.  We will define a
\emph{triangular relative Dehn function}, $\trdf{H,\omega}$, which
describes the difficulty of filling ``$\omega$-triangles'' with
vertices in $H$.  If $g_1,g_2,g_3\in G$, we say that
$$\Delta_\omega(g_1,g_2,g_3)=\omega(g_1^{-1}g_2)\omega(g_2^{-1}g_3)\omega(g_3^{-1}g_1)$$
is the \emph{$\omega$-triangle} with vertices $g_1, g_2, g_3$.  Then we can
define
$$\trdf{H,\omega}(n)=\mathop{\max_{h_1, h_2, h_3\in
    H}}_{\diam\{h_1, h_2, h_3\}\le n}
\delta_G\left(\Delta_\omega(h_1,h_2,h_3)\right).$$

If $h\in H$, then even though $\omega(h)$ will have endpoints in
$H$, it need not be a word in $H$, so upper bounds on $\reldehn{H}{G}$
might not lead to upper bounds on $\trdf{H,\omega}$.  On the other
hand, we can bound $\reldehn{H}{G}$ by decomposing words in $H$ into
$\omega$-triangles.  We can describe these decompositions using
\emph{templates}.  Let $\tau$ be a triangulation of $D^2$ whose
vertices are labeled by elements of $G$; this is a template.  If the
boundary vertices of $\tau$ are labeled (in order), $g_1,\dots, g_n$,
we let
$$w_\tau=\omega(g_1^{-1}g_2)\dots\omega(g_{n-1}^{-1}g_n)\omega(g_{n}^{-1}g_1),$$
and call $w_\tau$ the {\em boundary word} of $\tau$.  If
$w=w_1\dots w_n$ is a word and the boundary of $\tau$ is an $n$-gon
with labels $I$, $w_1$, $w_1w_2$, $\dots$, $w_1\dots w_{n-1}$, we call
$\tau$ a {\em template for $w$}.

\begin{figure}
  \def\svgwidth{4in}
  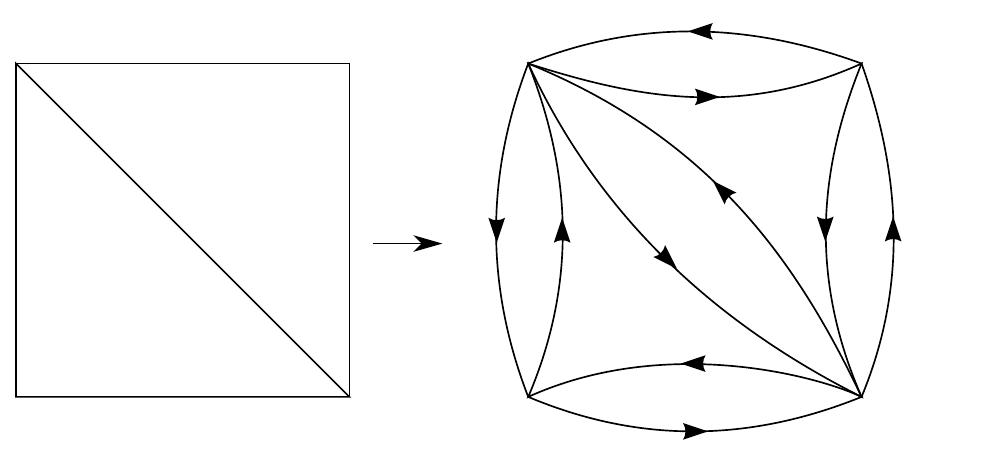
  \caption{\label{fig:template} The boundary word of the template on
    the left is
    $w_\tau=\omega(g_1^{-1}g_2)\omega(g_2^{-1}g_3)\omega(g_3^{-1}g_4)\omega(g_4^{-1}g_1)$.
    On the right, we use the template to break $w_\tau$ into five
    $\omega$-bigons of the form
    $\omega(g_i^{-1}g_j)\omega(g_j^{-1}g_i)$ and two
    $\omega$-triangles of the form $\Delta_\omega(g_i,g_j,g_k)$.}
\end{figure}

We say that we can \emph{break} a word $w$ into some words $w_i$ at
cost $C$ if each of the $w_i$'s represent the identity and there exist
words $g_i$ such that
$$\delta_\Gamma(w,\prod_i g_i w_i g_i^{-1})=C.$$
In particular, this means that 
$$\delta_\Gamma(w)\le C+ \delta_\Gamma(\prod_i g_i w_i g_i^{-1})\le
C+\sum_i \delta_\Gamma(w_i).$$ If $\tau$ is a template, we can break
$w_\tau$ into the $\omega$-triangles and $\omega$-bigons corresponding
to faces and edges of $\tau$ at cost $0$, as in
Figure~\ref{fig:template}.  If $\tau$ is a template for $w$, then
$w_\tau$ can be transformed to $w$ at cost $O(n)$, implying the
following lemma.
\begin{lemma}\label{lem:template}
  Let $w= w_1\dots w_{n}$ be a word of length $n$ and let $\tau$ be a
  template for $w$.  If the $i$th face of $\tau$ has vertices
  $g_{i1},g_{i2},g_{i3}$ and the $j$th edge of $\tau$ has vertices
  $h_{j1},h_{j2}$, then
  $$\delta_G(w)\le \sum_{i}
  \delta_G(\Delta_\omega(g_{i1},g_{i2},g_{i3}))+\sum_j \delta_G(\omega(h_{j1}^{-1}h_{j2})\omega(h_{j2}^{-1}h_{j1}))+O(n).$$
\end{lemma}

Many Dehn function bounds involve a divide-and-conquer strategy which
breaks a complicated word into smaller, simpler words, and templates
are useful to describe such strategies.  For example, one
divide-and-conquer strategy uses the template in
Figure~\ref{fig:dyadic} to build a filling of arbitrary words in a
group out of $\omega$-triangles.  A strategy like this is used, for
instance, in \cite[5.A$''_3$]{GroAII}, \cite{LeuzPitQuad}, and
\cite{deCorTess}; in fact, the following lemma is essentially
equivalent to Lemma~4.3 in \cite{deCorTess}.
\begin{lemma}\label{lem:dyadic}
  If there is an $\alpha>1$ such that for all $h_i\in H$ such that
  $$\trdf{H,\omega}(n)\lesssim n^\alpha,$$
  then
  $$\reldehn{H}{G}(n)\lesssim n^\alpha.$$
\end{lemma}
\begin{figure}
  \includegraphics[width=2.5in]{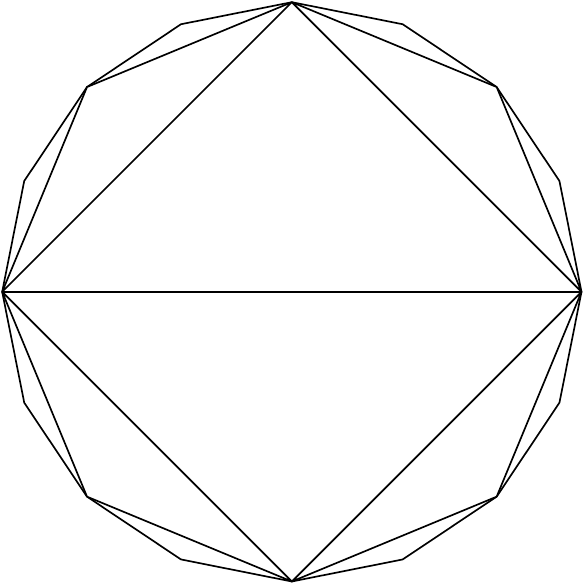}
  \caption{\label{fig:dyadic}A dyadic template}
\end{figure}
\begin{proof}
  Let $S_0\subset S$ be a generating set for $H$, as in the definition
  of $\reldehn{H}{G}$.  Without loss of generality, we may assume that
  the identity $I$ is in $S_0$.  Let $w=w_1\dots w_n$.  It suffices to
  consider the case that $n=2^k$ for some $k\in \Z$; otherwise, we may
  pad $w$ with the letter $I$ until its length is a power of 2.  Let
  $w(i)=w_1\dots w_i$.  Let $\tau$ be the template consisting of
  $2^k-2$ triangles as in Fig.~\ref{fig:dyadic}, where the
  vertices of $\tau$ are labeled by $w(i)$.

  Each triangle of $\tau$ has vertices labeled
  $$w(i2^j),w((i+1/2)2^{j}),w((i+1)2^j)$$
  for some $1\le j<k$ and $0\le i < 2^{-j}n$, which are separated by
  distances at most $2^{j}$.  By the hypothesis, the corresponding
  $\omega$-triangle has a filling of area $O(2^{\alpha j})$.
  Similarly, each edge has vertices labeled $w(i2^j)$ and
  $w((i+1)2^j)$, and corresponds to an $\omega$-bigon which can be
  filled at cost $O(2^{\alpha} j)$.  There are $\sim 2^{-j}n$ bigons
  and edges of size $2^j$, so after summing all the contributions, we
  find that $\delta_H(w)\lesssim n^\alpha$.
\end{proof}

\section{Sketch of proof}\label{sec:sketchProof}

Note that since $\SL(p;\Z)$ is not hyperbolic when $p\ge 3$, its Dehn
function is at least quadratic.  To prove Theorem~\ref{thm:mainThm},
it suffices to show that any word in $\SL(p;\Z)$ has a quadratic
filling.  We proceed by induction on subgroups of $\SL(p;\Z)$.  Very
roughly, we decompose words in $\SL(p;\Z)$ into words in subgroups of
$\SL(p;\Z)$ and then repeat the process inductively to get a filling
of the original word.  We reduce in two main ways.  First, a word in
$\SL(p;\Z)$ corresponds to a curve in the symmetric space
$\cE=\SL(p;\R)/\SO(n)$, and since $\cE$ is nonpositively curved, it
has a filling of quadratic area.  By breaking this filling into pieces
lying in different horoballs, we can break the original word into
pieces lying in maximal parabolic subgroups.

Second, a parabolic subgroup of $\SL(p;\Z)$ is conjugate to an upper
triangular subgroup, and can be written as a semidirect product of a
unipotent group (the off-diagonal part) and a product of
$\SL(q;\Z)$'s (the diagonal blocks).  We use techniques like those
used by Leuzinger and Pittet \cite{LeuzPitQuad} to reduce words in
$\SL(p;\Z)$ to words in the diagonal blocks.  Since each diagonal
block is smaller than the original matrix, repeating these two steps
eventually simplifies the word.

We describe this process more rigorously in the following lemmas.  In
all of these lemmas, $\omega$ will represent a normal form for
$\SL(p;\Z)$; we will define $\omega$ in Section~\ref{sec:normalForms}.
One key property of $\omega$ will be that it is a product of words
representing elementary matrices, which we call \emph{shortcuts}.
These shortcuts are based on the constructions in \cite{LMRComptes}.
Lubotzky, Mozes, and Raghunathan showed that the transvection
$e_{ij}(x)$ can be represented by a word of length logarithmic in $x$;
we denote this word by $\short{e}_{ij}(x)$ and call it a
\emph{shortcut for $e_{ij}(x)$.}  If $H\subset \SL(p;\Z)$, we say that
$w$ is a \emph{shortcut word} in $H$ if we can write $w=\prod_{i=1}^n
w_i$, where each $w_i$ is either a diagonal matrix in $H$ or a
shortcut $\short{e}_{a_ib_i}(x_i)$ where $e_{a_ib_i}(x_i)\in H$.  Our
normal form $\omega$ will express elements $g\in \SL(q;\Z)$ as
shortcut words, and if $g\in \SL(q;\Z)$ or $g\in U(s_1,\dots,s_k)$,
then $\omega(g)$ will be a shortcut word in $\SL(q;\Z)$ or in
$U(s_1,\dots,s_k)$ respectively.

First, we will break loops in $\SL(q;\Z)\subset \SL(p;\Z)$ into
$\omega$-triangles with vertices in maximal parabolic subgroups:
\begin{lemma}[Reduction to maximal parabolics]\label{lem:redPara}
  Let $p\ge 5$ and $2<q\le p$.  There is a $c>0$ such that if $w$ is a word in
  $\SL(q;\Z)$ of length $\ell$, then there are words $w_1,\dots, w_k$ such
  that we can break $w$ into the $w_1,\dots, w_k$ at cost $O(\ell)$;
  each $w_i$ either has length $\le c$ or is an $\omega$-triangle with vertices
  in some $U(q_i,q-q_i)$; and
  $$\sum_i \ell(w_i)^2=O(\ell^2).$$

  As a consequence, if
  $$\trdf{U(s,q-s),\omega}(n)\lesssim n^2$$
  for $s=1,\dots, q-1$, then 
  $$\reldehn{\SL(q;\Z)}{\SL(p;\Z)}(n)\lesssim n^2.$$
\end{lemma}

By our choice of $\omega$, each $w_i$ above is a shortcut word in some
parabolic subgroup.  Each parabolic subgroup is a semi-direct product
of a unipotent subgroup and a (virtual) product of copies of
$\SL(q_i;\Z)$, so we fill the triangles obtained in the previous lemma
by reducing them to shortcut words in the diagonal blocks and shortcut
words in the unipotent subgroup.  The word in the unipotent subgroup
can be filled by combinatorial methods, leaving just the words in the
diagonal blocks.

\begin{rem}
  Ideally, we would be able to construct a projection from an
  $\omega$-triangle in a parabolic subgroup $P$ to shortcut words in
  each diagonal block, and thus break an $\omega$-triangle $w$ in $P$
  into one shortcut word for each diagonal block of $P$ at cost
  $O(\ell(w)^2)$.  When $P\ne U(p-1,1)$, this is possible, but when
  $P=U(p-1,1)$, a different method of proof is necessary.
\end{rem}

\begin{lemma}[Reduction to diagonal blocks]\label{lem:redDiag}
  Let $p\ge 5$ and $q<p$.  Let $1\le s_1,\dots, s_k\le q$ be such that
  $\sum_i s_i\le p$ and suppose that $w$ is an $\omega$-triangle with
  vertices in $U(s_1,\dots,s_k)$ of length $\ell$.  There are words
  $w_1,\dots, w_n$ such that we can break $w$ into the $w_i$'s at cost
  $O(\ell^2)$.  Furthermore, for all $i$ there is a $q_i<q$ such that
  $w_i$ is a shortcut word in $\SL(q_i;\Z)$, and
  $$\sum_i \ell(w_i)^2=O(\ell^2).$$
\end{lemma}

To apply Lemma~\ref{lem:redPara} to these $w_i$ and complete the
induction, we need to replace these shortcut words with words in
$\SL(q;\Z)$.  When $q$ is sufficiently large, this can be done at
quadratic cost.
\begin{lemma}[Moving shortcuts into subgroups]\label{lem:relToTri}
  Let $p\ge 5$ and $2<q\le p$.  If $w$ is a shortcut word in
  $\SL(q;\Z)$, there is a word $w'$ in $\SL(q;\Z)$ such that 
  $\ell(w')=O(\ell(w)$ and $\delta_\Gamma(w,w')=O(\ell(w)^2)$.
\end{lemma}

Ultimately, the previous three lemmas break loops in $\SL(p;\Z)$ into
shortcut words in $\SL(2;\Z)$.  Even though $\SL(2;\Z)$ is virtually
free and has linear Dehn function, shortcut words may leave
$\SL(2;\Z)$ and may have quadratic fillings:
\begin{lemma}[Base case]\label{lem:baseCase}
  Let $p\ge 5$ and let $w$ be a shortcut word in $\SL(2;\Z)$ of length
  $\ell$.  Then 
  $$\delta_\Gamma(w)=O(\ell^2).$$
\end{lemma}

These four lemmas prove Theorem~\ref{thm:mainThm}:
\begin{proof}[Proof of Theorem~\ref{thm:mainThm}]
  We claim that if $w$ is a shortcut word in $\SL(q;\Z)$, then 
  $$\delta_\Gamma(w)=O(\ell^2).$$
  This implies that 
  $$\reldehn{\SL(q;\Z)}{\SL(p;\Z)}(n)\lesssim n^2,$$
  and when $q=p$, this proves the theorem.

  We proceed by induction.  When $q=2$, the statement is
  Lemma~\ref{lem:baseCase}.  Otherwise, since $q>2$, we can apply
  Lemma~\ref{lem:relToTri} to replace $w$ by a word $w'$ in
  $\SL(q;\Z)$, and apply Lemma~\ref{lem:redPara} and
  Lemma~\ref{lem:redDiag} to break $w'$ into words $w_{i}$, each a
  shortcut word in some $\SL(q_i;\Z)$'s, 
  such that $\sum_i \ell(w_i)^2=O(\ell^2).$  This has cost
  $O(\ell^2)$, and by the inductive hypothesis, the total filling area
  of the $w_i$'s is also $O(\ell^2)$, as desired.
\end{proof}

In the next two subsections, we will describe some of the ideas behind
the proofs of these lemmas.  Then, Lemma~\ref{lem:redPara} will be
proved in Section~\ref{sec:redParaProof}, Lemma~\ref{lem:relToTri}
will be proved in Section~\ref{sec:relToTriProof},
Lemma~\ref{lem:redDiag} will be proved in
Section~\ref{sec:redDiagProof}, and Lemma~\ref{lem:baseCase} will be
proved in Section~\ref{sec:baseCaseProof}.

\subsection{Constructing templates from Lipschitz fillings}
\label{subsec:redParaSketch}
One idea behind the proof of Theorem~\ref{thm:mainThm} is that we can use a
Lipschitz filling of a curve in a symmetric space to construct a
template for a filling of $w$.  In this section, we will sketch how to
use the pattern of intersections between the filling and the horoballs
in the symmetric space to break $w$ into $\omega$-triangles lying in
parabolic subgroups.

If $w$ is a word in $\SL(q;\Z)$, it corresponds to a curve $\gamma_w$
in the non-positively curved symmetric space $\cE=\SL(q;\R)/\SO(q)$ of
length $\ell$, and this curve has a quadratic filling.  Indeed, if
$D^2(\ell)$ is the disc $[0,\ell]\times [0,\ell]$, there is a filling
$f:D^2(\ell)\to \cE$ which has Lipschitz constant at most $2$.  We can
construct $f$ by choosing a basepoint on the curve and contracting the
curve to the basepoint along geodesics.  Choose a Siegel set
$\cS\subset \cE$; this is a fundamental set for the action of
$\SL(q;\Z)$ on $\cE$ (see Sec.~\ref{sec:depthFunction}).  Each point of $\cE$ lies in some
translate of $\cS$; we can define a map $\rho:\cE\to \SL(q;\Z)$ by
sending each point $x$ to a group element $\rho(x)$ such that $x\in
\rho(x)\cS$.  Then, if $\tau$ is a triangulation of $D^2(\ell)$, we
can label each vertex $v$ by the element $\rho(f(v))$.  This is a
template, and if the boundary edges of $\tau$ each have length bounded
by a constant, then the boundary word $w_\tau$ of the template is
uniformly close to $w$.

As a simple application, we will show that for any $q$, the Dehn
function of $\SL(q;\Z)$ is bounded by an exponential function.  It is
straightforward to show that the injectivity radius of $z\in
\cE/\SL(q;\Z)$ shrinks exponentially quickly as $z\to \infty$; that
is, that there is a $c$ such that if $x,y\in \cE$, $d_\cE(I,x)\le r$,
and $d_{\cE}(x,y)\le e^{-c r}$, then
$d_{\SL(q;\Z)}(\rho(x),\rho(y))\le c$.  Let $\tau$ be a triangulation
of $D^2(\ell)$ by triangles with side lengths at most $e^{-2c\ell}$.
If an edge of $\tau$ connects vertices $u$ and $v$, then
$d_{\SL(q;\Z)}(\rho(f(u)),\rho(f(v)))\le c$, so
$$\delta(w_\tau)\le F \delta(3c)+E \delta(2c),$$
where $F$ is the number of faces of $\tau$ and $E$ is the number of
edges.  Since we can construct $\tau$ to have
at most exponentially many triangles, $\delta(w_\tau)\lesssim e^\ell$.

Triangulations with larger simplices lead to larger $\omega$-triangles
but potentially stronger bounds on the Dehn function; for example, in
\cite{YoungQuart}, we used a triangulation by triangles of diameter
$\sim 1$ to prove a quartic bound on $\SL(q;\Z)$ when $q\ge 5$.  The
basic idea behind that proof was that if $x, y\in \cE$ are
sufficiently close together, then either $\rho(x)^{-1}\rho(y)$ is
bounded or it lies in a parabolic subgroup of $\SL(q;\Z)$, so the
methods above produce a template whose triangles all either have
bounded size or lie in a parabolic subgroup.  Furthermore, since each
edge is short, the group elements corresponding to edges satisfy
bounds which make the triangles easy to fill.

We use a similar idea to prove Lemma~\ref{lem:redPara}.  One can show
(see Cor.~\ref{cor:parabolicNbhds}) that if $x$ is deep in the cusp of
$\cM$, i.e., if $r(x)=d_\cE(x,[\SL(p,\Z)]_\cE)$ is large, then there
is a ball around $x$ of radius $\sim r(x)$ which is contained in a
horoball corresponding to a maximal parabolic subgroup of $\SL(q;\Z)$.
In particular, if $d(x,y)\ll r(x)$, then $\rho(x)^{-1}\rho(y)$ lies in
a maximal parabolic subgroup of $\SL(q;\Z)$.

If $f:D^2(\ell)\to \cE$ is a Lipschitz filling of a curve $\gamma$, we
can construct a triangulation of $D^2(\ell)$ where the size of each
triangle is proportional to its distance from the thick part.  By
labeling the vertices of this triangulation as above, we get a
template made of triangles which are either ``small'' or ``large''.
Small triangles are those whose image under $f$ is in the thick part;
since the injectivity radius of $\cE$ is bounded away from zero in the
thick part, the
vertex labels of a small triangle are a bounded distance apart in
$\SL(q;\Z)$.  Large triangles are those whose image is in the thin
part.  The image of a large triangle under $f$ lies in a horoball, and
its vertex labels lie in a conjugate of one of the maximal parabolic
subgroups.  Ultimately, this lets us break words in $\SL(q;\Z)$ into
$\omega$-triangles with vertices in parabolic subgroups.  This is a key step in the proofs of
Lemmas~\ref{lem:redPara} and \ref{lem:baseCase}, and in a special case
of Lemma~\ref{lem:redDiag}.

\subsection{Shortcuts in $\SL(p;\Z)$}

Another idea behind the proof of Theorem~\ref{thm:mainThm} is the idea of
\emph{shortcuts}, words of length $\sim \log n$ which represent
transvections in $\SL(p;\Z)$ with coefficients of order $n$.  These
shortcuts are a key ingredient in the construction of the normal form
$\omega$.  Transvections satisfy Steinberg relations, and one of
the key combinatorial lemmas (Lemma~\ref{lem:infPres}) states that when these Steinberg
relations are written in terms of shortcuts, the resulting words have
quadratic fillings.

Our shortcuts are based on constructions of Lubotzsky, Mozes, and
Raghunathan \cite{LMRComptes}, who used them to show that distances in
the word metric on $\SL(p;\Z)$ are comparable to distances in the
riemannian metric on the symmetric space $\SL(p;\Z)/\SO(p)$ when $p\ge
3$.  In particular, if $M\in \SL(p;\Z)$ is a matrix with coefficients
bounded by $\|M\|_\infty$, there is a word $w$ which represents $M$ as
a product of $\sim \log \|M\|_\infty$ generators of $\SL(p;\Z)$.  They
construct this $w$ by decomposing $M$ into a product of transvections
with integer coefficients, then writing each transvection as a word in
$\SL(p;\Z)$.  This can be done efficiently because unipotent subgroups
of $\SL(p;\Z)$ are exponentially distorted; a transvection with
$L^\infty$ norm $N$ can be written as a word of length $\sim \log N$.
In fact, a transvection can be written as a word of length $\sim \log
N$ in many ways.  

One advantage of working with $\SL(p;\Z)$ instead of an arbitrary
lattice in a high-rank Lie group is that shortcuts in $\SL(p;\Z)$ can
be written with just a few generators, and that many of the generators
of $\SL(p;\Z)$ commute.  For example, if we define $e_{ij}(x)$ to be
the elementary matrix obtained by replacing the $(i,j)$-entry of the
identity matrix by $x$, there is a word $\short{e}_{13}(x)$ in the
alphabet $\{e_{12},e_{21},e_{13},e_{23}\}$ which represents
$e_{13}(x)$ and has length $\sim \log |x|$.  If $w$ is a product of
generators that commute with this alphabet, it's easy to fill words
like $[\short{e}_{13}(x), w]$.  Furthermore, when $p\ge 5$, different
ways of constructing shortcuts are close together; we can arrange
things so that if $\short{e}$ and $\short{e}'$ are shortcuts for the
same elementary matrix written in different alphabets, then
\begin{equation}\label{eq:differentShorts}
  \delta(\short{e}, \short{e}')\lesssim \ell(\short{e})^2.
\end{equation}
This lets us write elementary matrices in terms of whichever alphabet
is most convenient.  The fact that \eqref{eq:differentShorts} is not
true when $p=4$ is the biggest obstacle to extending these techniques
to $\SL(4;\Z)$; an analogue of \eqref{eq:differentShorts} for
$\SL(4;\Z)$ would lead to a polynomial bound on its Dehn function.

\noindent \textit{Remark on notation:} We will generally use hats
to denote shortcuts, so $e_{ij}(x)$ and $u(V)$ will denote
unipotent matrices and $\short{e}_{ij}(x)$ and $\short{u}(V)$
will denote words of logarithmic length that represent the
corresponding matrices.  

We prove Lemma~\ref{lem:redDiag} using these shortcuts.  The
normal form $\omega$ expresses elements of $\SL(p;\Z)$ in terms of
shortcuts, and the proof of Lemma~\ref{lem:redDiag} mostly consists of
combinatorial calculations involving these shortcuts.  For example,
as mentioned above, one step in the proof involves constructing
fillings of Steinberg 
relations.  The elementary matrices $e_{ij}(x)$ satisfy relations
like $[e_{ij}(x),e_{kl}(y)]=I$ and $[e_{ij}(x),e_{jk}(y)]=e_{ik}(xy)$,
so the corresponding products of shortcuts (e.g.,
$[\short{e}_{ij}(x),\short{e}_{kl}(y)]$) are words representing the
identity.  By rewriting these shortcuts in appropriate alphabets, we
can fill these words efficiently.

\section{Siegel sets and the depth function}\label{sec:depthFunction}

Let $G=\SL(p;\R)$ and $\Gamma=\SL(p;\Z)$.  Given a fundamental set $F$
for the action of $\Gamma$ on $\cE$, one can construct a map $\cE\to
\Gamma$ which sends each point $x$ of $\cE$ to an element $g\in
\Gamma$ such that $x\in gF$.  In general, this map need not be
well-behaved, but if $F$ is a Siegel set, this map has many useful
properties.  In this section, we will define a Siegel set $\cS$ and
describe some of its properties.  Note that the constructions in this
section generalize to many reductive and semisimple Lie groups with
the use of precise reduction theory, but we will only state the
results for $\SL(p;\Z)$, as stating the theorems in full generality
requires a lot of additional background (see Sec.~\ref{sec:open} for
some discussion of the general case).

Let $\diagmat(t_1,\dots, t_p)$ be the diagonal matrix with entries
$(t_1,\dots, t_p)$.  Let $A$ be the set of diagonal matrices in $G$
and if $\epsilon>0$, let
$$A^+_{\epsilon}=\{\diagmat(t_1,\dots, t_p)\mid \prod t_i=1, t_i > 0,
t_i\ge \epsilon t_{i+1}\}.$$ Let $\cM=\SL(p;\Z)\backslash \cE$.  One
of the main features of $\cM$ is that it is Hausdorff equivalent to
$A^+_{\epsilon}$; our main goal in this section is to describe this
Hausdorff equivalence and its ``fibers''.  Let $N$ be the set of upper
triangular matrices with 1's on the diagonal and let $N^+$ be the
subset of $N$ with off-diagonal entries in the interval $[-1/2,1/2]$.
Translates of the set $N^+A^+_\epsilon$ are known as Siegel sets.  The
following properties of Siegel sets are well known (see for instance
\cite{BorHar-Cha}).
\begin{lemma}\label{lem:redThe}\ \\
  There is an $1>\epsilon_{\cS}>0$ such that if we let 
  $$\cS:=[N^+A^+_{\epsilon_{\cS}}]_\cE\subset \cE,$$
  then
  \begin{itemize}
  \item $\Gamma\cS=\cE$. \label{lem:redThe:cover}
  \item There are only finitely many elements $\gamma\in \Gamma$
    such that $\gamma \cS \cap \cS \ne \emptyset$.  \label{lem:redThe:fundSet}
  \end{itemize}
\end{lemma}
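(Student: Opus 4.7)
The plan is to invoke classical reduction theory for arithmetic subgroups of semisimple Lie groups, as developed by Siegel, Borel, and Harish-Chandra. Both statements are essentially the defining properties of a Siegel fundamental set for $\Gamma$ acting on $\cE$, and I would cite \cite{BorHar-Cha} for the full proof while sketching the constructive ingredients.

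For the first assertion $\Gamma \cS = \cE$, the starting point is the Iwasawa decomposition $G = NAK$ with $K = \SO(p)$, which identifies $\cE = G/K$ with $NA$. Given $g \in G$, write $g = nak$. Two successive reductions bring $[g]_\cE$ into $\cS$: first act by $\Gamma \cap N$, which is the group of upper unitriangular integer matrices and is cocompact in $N$ with $N^+$ a fundamental domain, to arrange $n \in N^+$; second, iteratively swap adjacent diagonal entries that violate the Siegel inequality $t_i \ge \epsilon_\cS t_{i+1}$ using elements of the Weyl group realized inside $\Gamma$. A cleaner reformulation uses the lattice viewpoint: interpret $g$ as the unimodular lattice $g \Z^p$ and apply Minkowski's theorem to produce a reduced basis; for a suitable $\epsilon_\cS$ depending only on $p$, the ``shape matrix'' of a Minkowski-reduced basis automatically lies in $N^+ A^+_{\epsilon_\cS}$.

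For the finiteness statement, suppose $\gamma \in \Gamma$ with $\gamma \cS \cap \cS \ne \emptyset$, and choose $n_1 a_1, n_2 a_2 \in N^+ A^+_{\epsilon_\cS}$ and $k \in K$ with $\gamma n_1 a_1 = n_2 a_2 k$. Computing matrix entries in the standard basis of $\R^p$, the Siegel inequalities force each off-diagonal entry $\gamma_{ij}$ with $i>j$ to be bounded by a universal constant times a product of ratios of diagonal entries of $a_1$ and $a_2$, which in turn is at most $C\epsilon_\cS^{\,j-i}$. Shrinking $\epsilon_\cS$ if necessary, these bounds become strictly less than $1$; since $\gamma$ is integral, the subdiagonal entries vanish, placing $\gamma$ in the upper unitriangular integer group. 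A similar estimate bounds the diagonal and superdiagonal entries of $\gamma$, and integrality then forces $\gamma$ into a finite set.

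The main obstacle to a fully self-contained proof is the existence of the correct $\epsilon_\cS$ in the first statement, which requires a careful reduction algorithm or Minkowski's reduction theorem and the interplay of the $N$- and $A$-reductions (the $N$-reduction must be redone after each $A$-swap). Since this is entirely classical and the theorem is stated only to motivate the geometric description of $\cM$ that follows, I would simply refer to \cite{BorHar-Cha} rather than reprove it here.
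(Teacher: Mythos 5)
The paper gives no proof of this lemma at all: it records the two properties as well known and cites \cite{BorHar-Cha}, so your plan of deferring to classical reduction theory is exactly what the paper does. Your sketch of the covering property $\Gamma\cS=\cE$ (reduce by the integral unipotent lattice to put the $N$-part in $N^+$, then correct violations of the Siegel inequality by Weyl-group elements, or equivalently use Minkowski reduction of the lattice) is the standard argument and is fine as a sketch, modulo the termination issue you yourself flag.

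Your sketch of the finiteness property, however, contains a genuine error. It is not true that $\gamma\cS\cap\cS\ne\emptyset$ forces the subdiagonal entries of $\gamma$ to vanish: already for $p=2$ the element $\begin{pmatrix}0&-1\\1&0\end{pmatrix}\in\SL(2;\Z)$ lies in $\SO(2)$ and fixes $[I]_\cE$, which belongs to $\cS$ (take $n=I$, $a=I$), yet it is not upper triangular; more generally any signed permutation matrix in $\Gamma\cap\SO(p)$ gives such an intersection. The direction of your ``shrinking $\epsilon_\cS$'' step is also backwards: decreasing $\epsilon_\cS$ weakens the condition $t_i\ge\epsilon_\cS t_{i+1}$, so $A^+_{\epsilon_\cS}$ and hence $\cS$ only get larger and the set of intersecting $\gamma$ can only grow --- and in any case $\epsilon_\cS$ has already been fixed to make the covering property hold, so it cannot be shrunk afterwards. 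The correct classical argument (Siegel, Borel--Harish-Chandra) first shows that if $\gamma n_1a_1=n_2a_2k$ with both points in $\cS$, then the diagonal parts are comparable, i.e.\ each ratio of corresponding entries of $a_1$ and $a_2$ is bounded above and below by constants depending only on $p$ and $\epsilon_\cS$; one then bounds all entries of $\gamma$ and of $\gamma^{-1}$, and integrality confines $\gamma$ to a finite set --- it does not annihilate any entries. Since you ultimately cite \cite{BorHar-Cha} anyway, the proposal as a whole is acceptable and matches the paper, but the finiteness sketch as written would not survive being fleshed out.
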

In particular, the quotient map $\cS\to \cM$ is a surjection.  We
define $A^+:=A^+_{\epsilon_{\cS}}$.

The inclusion $A^+ \hookrightarrow \cS$ is a Hausdorff equivalence.
That is, if we give $A$ the riemannian metric inherited from its
inclusion in $G$, so that
$$d_{A}(\diagmat(d_1,\dots,d_p),\diagmat(d'_1,\dots,d'_p))=\sqrt{\sum_{i=1}^p
  \left|\log \frac{d'_i}{d_i}\right|^2},$$
then
\begin{lemma}[{\cite{JiMacPherson}}] \label{lem:easyHausdorff} There
  is a $c$ such that if $n\in N^+$ and $a\in A^+$, then
  $d_\cE([na]_\cE,[a]_\cE)\le c$.  In particular, if $x\in \cS$, then
  $d_\cE(x,[A^+]_\cE)\le c$.

  Furthermore, if $x,y\in A^+$, then $d_{A}(x,y)=d_{\cS}(x,y)$.
\end{lemma}
\begin{proof}
  For the first claim, note that if $x=[na]_\cE$, then $x=[a(a^{-1} n
  a)]_\cE$, and $a^{-1}na\in N$.  Furthermore,
  $$\|a^{-1}na\|_\infty\le \epsilon_\cS^{-p},$$
  so 
  $$d_\cE([x]_\cE,[a]_\cE)\le d_G(I,a^{-1}na)$$
  is bounded independently of $x$.

  For the second claim, we clearly have $d_{A}(x,y)\ge
  d_{\cS}(x,y)$.  For the reverse inequality, it suffices to note that
  the map $\cS\to A^+$ given by $na\mapsto a$ for all $n\in N^+$,
  $a\in A^+$ is distance-decreasing.
\end{proof}
Siegel conjectured that the quotient map from $\cS$ to $\cM$ is also a
Hausdorff equivalence, that is:
\begin{thm}\label{thm:SiegConj}
  There is a $c'$ such that if $x,y\in \cS$, then 
  $$d_{\cE}(x,y)-c'\le d_{\cM}([x]_\cM,[y]_\cM)\le d_{\cE}(x,y)$$
\end{thm}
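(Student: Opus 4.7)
The upper bound $d_\cM([x]_\cM, [y]_\cM) \le d_\cE(x,y)$ is immediate, since the quotient map $\cE \to \cM$ is $1$-Lipschitz: any curve in $\cE$ from $x$ to $y$ descends to one of the same length in $\cM$.

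For the lower bound, my plan is to lift an almost-optimal path from $\cM$ to $\cE$ and modify it so that its endpoints are exactly $x$ and $y$. Fix $\epsilon>0$; take $\bar\alpha:[0,1]\to\cM$ from $[x]_\cM$ to $[y]_\cM$ of length at most $d_\cM([x]_\cM,[y]_\cM)+\epsilon$, and lift it to a path $\alpha:[0,1]\to\cE$ with $\alpha(0)=x$ and $\alpha(1)=\gamma y$ for some $\gamma\in\Gamma$. Since $\Gamma\cS=\cE$ by Lemma~\ref{lem:redThe}, a Lebesgue-number argument applied to the open cover $\{\gamma'\cdot\inter(\cS')\}_{\gamma'\in\Gamma}$ (for a slight enlargement $\cS'$ of $\cS$) lets me partition $\alpha$ into sub-arcs $\alpha_1,\dots,\alpha_k$ with each $\alpha_i$ lying in some $\gamma_i\cS'$, arranged so that $\gamma_1=e$ and $\gamma_k=\gamma$. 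Translating by $\gamma_i^{-1}$ produces arcs $\beta_i:=\gamma_i^{-1}\alpha_i$ in $\cS'$ whose concatenation starts at $x$ and ends at $y$. At each junction, the endpoints of $\beta_i$ and $\beta_{i+1}$ differ by $\gamma_{i+1}^{-1}\gamma_i$, which satisfies $\gamma_{i+1}^{-1}\gamma_i\cS'\cap\cS'\neq\emptyset$ and therefore belongs to a finite set by Lemma~\ref{lem:redThe}, so bridging geodesics of length at most some universal $C$ suffice.

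The main obstacle is that this naive subdivide-and-translate argument yields only a quasi-isometric bound $d_\cE(x,y)\le C' d_\cM([x]_\cM,[y]_\cM)+C'$: when $\alpha$ is long, the number of sub-arcs grows linearly with its length and the $k-1$ junctions each contribute an additive $C$. Upgrading to the sharp additive bound of the theorem is the essence of Siegel's conjecture and requires a genuine use of reduction theory rather than a soft covering argument. My plan here would be to use the Langlands decomposition $G=NAK$ and the stratification of $\partial A^+$ by faces corresponding to rational parabolic subgroups, together with a $\Gamma$-equivariant, distance-nonincreasing deformation of $\cE$ onto a bounded neighborhood of $\cS$. Such a retraction lets one ``straighten'' $\alpha$ so that it crosses between $\Gamma$-translates of $\cS$ only a bounded number of times, at which point the finiteness in Lemma~\ref{lem:redThe} gives total bridging cost $O(1)$ rather than $O(d_\cM)$.

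The hardest part of the plan is constructing (and justifying the relevant estimates for) the equivariant retraction onto a neighborhood of $\cS$: one must show that pushing a near-geodesic path out of a deep stratum of the thin part of $\cM$ and onto its boundary changes the length only by an additive constant, uniformly in how deep into the cusp the path penetrates. This requires control over the geometry of each rational parabolic subgroup's unipotent radical in tandem with the Euclidean directions coming from $A$, which is exactly where precise reduction theory enters.
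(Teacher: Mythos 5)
Your upper bound is correct, and it is exactly the easy half: the quotient map $\cE\to\cM$ is $1$-Lipschitz, so $d_\cM([x]_\cM,[y]_\cM)\le d_\cE(x,y)$. But for the lower bound you have not given a proof --- you have given a plan, and you yourself identify the missing step. As you note, the subdivide-and-translate argument using Lemma~\ref{lem:redThe} only produces a quasi-isometric estimate $d_\cE(x,y)\le C'\,d_\cM([x]_\cM,[y]_\cM)+C'$, and upgrading the multiplicative constant to $1$ with purely additive error is precisely the content of Siegel's conjecture. The ingredient you propose to close the gap --- a $\Gamma$-equivariant, distance-nonincreasing deformation of $\cE$ onto a bounded neighborhood of $\cS$ that changes lengths of near-geodesics only by an additive constant, uniformly in how deep they go into the cusp --- is never constructed, and constructing it with those estimates is essentially as hard as the theorem itself. (An equivariant polyhedral retraction does exist, cf.\ \cite{LeuzingerPolyRet}, but it is not distance-nonincreasing in the strong sense you need, and the additive control does not come from soft covering or Lebesgue-number arguments; the known proofs instead compare both $d_\cM$ and $d_\cE$ restricted to $\cS$ with the flat distance on $A^+$ via explicit estimates in horospherical coordinates and precise reduction theory.) So there is a genuine gap: the substantive direction of the inequality is asserted, not proved.

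For context, the paper does not prove this statement either: it is exactly Siegel's conjecture, and the text simply cites the proofs of Leuzinger, Ji, and Ding \cite{Leuzinger,Ji,Ding}. The appropriate treatment here is therefore a citation (or a reproduction of one of those arguments); your write-up correctly isolates where the difficulty lives, but it does not discharge it.
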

Proofs of this conjecture can be found in \cite{Leuzinger,Ji,Ding}.
As a consequence, the natural quotient map $A^+\to \cM$ is a Hausdorff
equivalence.

Since $\cS$ is a fundamental set, any point $x\in \cE$ can be written
(possibly non-uniquely) as $x=[\gamma na]_\cE$ for some $\gamma\in
\Gamma$, $n\in N^+$ and $a\in A^+$.  Theorem~\ref{thm:SiegConj} implies that these
different decompositions are a bounded distance apart:
\begin{cor}[{see \cite{JiMacPherson}, Lemmas 5.13, 5.14}]\label{cor:Hausdorff}
  There is a constant $c''$ such that if $x,y\in \cM$,
  $n,n' \in N^+$ and $a,a'\in A^+$ are
  such that $x=[na]_\cM$ and $y=[n'a']_\cM$, then
  $$|d_{\cM}(x,y)- d_{A}(a,a')|\le c''.$$ 

  In particular, if $[\gamma na]_\cE=[\gamma' n' a']_\cE$ for some
  $\gamma,\gamma'\in \Gamma$, then $d_{A}(a,a')\le c''.$
\end{cor}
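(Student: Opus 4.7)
The plan is to derive the corollary by chaining Theorem \ref{thm:SiegConj} with the two bullets of Lemma \ref{lem:easyHausdorff}, using the triangle inequality. Each of the three comparisons is up to an additive constant, so $c''$ will emerge as an explicit combination of $c$ (from Lemma \ref{lem:easyHausdorff}) and $c'$ (from Theorem \ref{thm:SiegConj}).

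First, I would apply the first bullet of Lemma \ref{lem:easyHausdorff} to get $d_\cE([na]_\cE, [a]_\cE) \le c$ and $d_\cE([n'a']_\cE, [a']_\cE) \le c$; the triangle inequality then yields
\[ \bigl| d_\cE([na]_\cE, [n'a']_\cE) - d_\cE([a]_\cE, [a']_\cE) \bigr| \le 2c. \]
Next, since $A^+$ sits as a flat in $\cE$, the second bullet of Lemma \ref{lem:easyHausdorff} (together with the fact that the orbit map $G \to \cE$ is distance non-increasing on $A$) identifies $d_\cE([a]_\cE, [a']_\cE)$ with $d_A(a,a')$. Finally, since both $[na]_\cE$ and $[n'a']_\cE$ lie in $\cS$, Theorem \ref{thm:SiegConj} gives
\[ \bigl| d_\cM(x, y) - d_\cE([na]_\cE, [n'a']_\cE) \bigr| \le c'. \]
Chaining these three estimates produces $|d_\cM(x,y) - d_A(a,a')| \le 2c + c'$, which I take as $c''$.

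The ``in particular'' clause is then immediate: if $[na]_\cM = [n'a']_\cM$, then $d_\cM(x,y) = 0$, so $d_A(a,a') \le c''$. I do not expect any real obstacle here, since all of the geometric depth is already packed into Siegel's conjecture; this corollary is essentially a bookkeeping exercise to transfer that estimate from pairs in $\cS$ to their $A^+$-coordinates.
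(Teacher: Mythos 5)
Your proposal is correct and takes essentially the same route as the paper: both proofs combine the two bullets of Lemma~\ref{lem:easyHausdorff} with Theorem~\ref{thm:SiegConj} via the triangle inequality and arrive at the same constant $c''=2c+c'$. The only cosmetic difference is that you apply Siegel's theorem at $[na]_\cE,[n'a']_\cE$ and do the bookkeeping in $\cE$, whereas the paper first pushes everything to $\cM$ (using $d_\cM(x,[a]_\cM)\le c$, $d_\cM(y,[a']_\cM)\le c$) and applies the theorem at $a,a'\in A^+\subset\cS$; the content is identical.
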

\begin{proof}
  Note that by Lemma~\ref{lem:easyHausdorff}, 
  $$d_{\cM}(x,[a]_\cM)\le d_{\cE}([na]_\cE,[a]_\cE)\le c$$
  and likewise $d_{\cM}(y,[a']_\cM)\le c$.  Furthermore, by the
  theorem and the lemma, 
  $$d_A(a,a')-c'=d_\cS(a,a')-c' \le d_{\cM}([a]_\cM,[a']_\cM) \le
  d_A(a,a'),$$
  so if we let $c''=c'+2c$, the corollary follows.
\end{proof}

Let $\rho:\cE\to \Gamma$ be a map such that $\rho(\cS)=I$ and $x\in
\rho(x)\cS$ for all $x$.  Any point $x\in \cE$ can be uniquely written
as $x=[\rho(x)na]_\cE$ for some $n\in N^+$ and $a\in A^+$.  Let
$\phi:\cE\to A^+$ be the map $[\rho(x)na]_\cE\mapsto a$.  There are
many choices for $\rho$, but, by Cor.~\ref{cor:Hausdorff}, they only
affect the definition of $\phi$ by a bounded amount.  If
$\phi(x)=\diagmat(a_1,\dots,a_p)$, let $\phi_i(x)=\log a_i$.  If
$x,y\in \cE$, then $|\phi_i(x)-\phi_i(y)|\le d_\cE(x,y)+ c''$; let
$c_\phi:=c''$.\label{sec:cphi}

Define the \emph{depth function}
$r:\cE\to \R^+$, $r(x)=d_{\cM}([x]_\cM,[I]_{\cM})$ which measures the
distance between $x$ and the thick part of $\cE$; the results above
imply that 
$$r(x)\sim \log \|\phi(x)\|_2\sim \phi_1(x)-\phi_p(x).$$
Since the injectivity radius of the cusp decreases exponentially as
one gets further away from $\Gamma$, the distortion of $\rho$ depends
on depth:
\begin{lemma} \label{lem:depthLength}
  There is a $c$ such that if $x,y\in \cE$, then 
  $$d_\Gamma(\rho(x),\rho(y))\le c(d_\cE(x,y)+r(x)+r(y))+c$$
\end{lemma}
\begin{proof}
  By Thm.~\ref{thm:SiegConj}, there is a $c_0$ such that
  $d_\cE([\rho(x)]_\cE,x)\le r(x)+c_0,$
  so 
  $$d_\cE([\rho(x)]_\cE,[\rho(y)]_\cE) \le r(x)+r(y)+d_\cE(x,y)+2c_0.$$
  The lemma follows by Thm.~\ref{thm:LMR}.
\end{proof}

The depth function governs $\rho$ in other ways as well.  Recall that
if $x\in \cE$ and $\tilde{x}\in G$ is a representative of $x$, we can
construct a lattice $\Z^p \tilde{x}\subset \R^p$ and a different choice
of $\tilde{x}$ corresponds to a lattice that differs by a rotation.
When $r(x)$ is large, then the lattice has short vectors.  If $y$ is
close to $x$, then vectors that are short in $\Z^p \tilde{y}$ are also
short in $\Z^p
\tilde{x}$.  These vectors define a subspace in $\Z^p$, and
$\rho(x)^{-1}\rho(y)$ must preserve that subspace; i.e.,
$\rho(x)^{-1}\rho(y)$ must lie in a parabolic subgroup.  The next
lemmas make this argument formal.  
If $x\in \cE$ and $\tilde{x}\in G$ is such that $x=[\tilde{x}]_\cE$, 
let
$$V(x,r)=\langle v\in \Z^p\mid \|v \tilde{x}\|_2 \le r  \rangle;$$
we call this the \emph{$r$-short subspace of $x$} and it is
independent of the choice of $\tilde{x}$.  Let $z_1,\dots, z_p\in
\Z^p$ be the standard generators of $\Z^p$.

\begin{lemma}\label{lem:phiFlag}
  There is a $c_V>0$ depending only on $p$ such that if $x=[\gamma n
  a]_\cE$, where $\gamma\in \Gamma$, $n\in N^+$, 
  $$a=\diagmat(a_1,\dots,a_p)\in A^+,$$
  and
  $$e^{c_V}a_{k+1}<r<e^{-c_V}a_{k},$$
  then $V(x,r)=Z_k\gamma^{-1}$, where $Z_k:=\langle z_{k+1},\dots, z_p
  \rangle$.
\end{lemma}
\begin{proof}
  Note that $V(\gamma' x',r)=V(x',r){\gamma'}^{-1}$, so we may assume that
  $\gamma=I$ without loss of generality.  Let $n=\{n_{ij}\}\in N^+$
  and let $\tilde{x}=na$.  We have
  \begin{align*}
    z_j \tilde{x} &= z_j na \\
    &= a_j z_j+\sum_{i=j+1}^p n_{ji} z_ia_{i}.
  \end{align*}
  Since $a_{i+1}\le a_i \epsilon_{\cS}^{-1}$, we have
  $a_i\le a_{k+1}\epsilon_{\cS}^{-p}$ for $i\ge k+1$ and 
  $a_i\ge a_{k}\epsilon_{\cS}^{p}$ for $i\le k$.  Since
  $|n_{ji}|\le 1/2$ when $i>j$,
  we have
  $$\|z_j \tilde{x}\|_2 \le a_{k+1}\sqrt{p}\epsilon_{\cS}^{-p}$$
  when $j>k$.
  Thus 
  $$V(x,a_{k+1}\sqrt{p}\epsilon_{\cS}^{-p})\supset Z_k.$$  

  On the
  other hand, assume that $v\not \in Z_k$, and let $v=\sum_i v_i z_i$ for some
  $v_i\in \Z$.  Let $j$ be the smallest integer such that $v_j\ne 0$; by
  assumption, $j\le k$.  The $z_{j}$-coordinate of $v\tilde{x}$ is
  $v_{j} a_{j}$, so
  $$\|v \tilde{x} \|_2 \ge a_{j} > a_{k} \epsilon_{\cS}^{p}$$
  and thus if $t<a_{k} \epsilon_{\cS}^{p}$, then $V(x,t)\subset Z_k$.
  Therefore, if 
  $$a_{k+1}\sqrt{p}\epsilon_{\cS}^{-p}\le t<a_{k} \epsilon_{\cS}^{p},$$
  then $V(\tilde{x},t)=Z_j$.  We can choose $c_V=\log \sqrt{p}\epsilon_{\cS}^{-p}$.
\end{proof}
In particular, since different choices in the construction of
$\rho(x)$ must still lead to the same $V(x,r)$, this means that if
$\phi_k(x)-\phi_{k+1}(x)$ is sufficiently large, then 
different choices of $\rho(x)$ must differ by an element of $U(k,p-k)$.
The next lemma extends this by noting that nearby points of $\cE$ must
have the same $r$-short subspaces:
\begin{lem} \label{lem:parabolicSubsets}
  Let 
  $$B_j(c):=\{x\in \cE\mid \phi_j(x)-\phi_{j+1}(x)>c\}.$$
  There is a $c>0$ depending only on $p$ such that if $1\le j<p$,
  $x,y\in \cE$ are in the same connected component of $B_j(c)$, and
  $g,h\in \Gamma$ are such that $x\in g\cS$, $y\in h\cS$, then
  $g^{-1}h\in U(j,p-j)$.  In particular, $\rho(x)^{-1}\rho(y)\in
  U(j,p-j)$.
\end{lem}
\begin{proof}
  Define $s(z)=\exp\frac{\phi_{j+1}(z)+\phi_j(z)}{2}$, so that if
  $z\in B_j(c)$, then
  $$e^{c/2}e^{\phi_{j+1}(z)}<s(z)<e^{-c/2}e^{\phi_{j}(z)}.$$
  We will show that if $c$ is sufficiently large, then the function
  $z\mapsto V(z,s(z))$ is constant on each connected component of
  $B_j(c)$.  Let $c=2(c_V+ c_\phi+1)$.

  Note that if $z,z'\in \cE$, if $\tilde{z},\tilde{z}'\in G$ are
  representatives of $z$ and $z'$, and if $v\in \Z^p$, then
  $$\left|\log \|v \tilde{z}\|_2-\log \|v \tilde{z}'\|_2\right| \le  d_\cE(z,z').$$
  Furthermore, if $z,z'\in B_j(c)$, then 
  $$|\log s(z)-\log s(z')|\le d_\cE(z,z')+c_\phi.$$

  Fix $z$.  Since $z=[\rho(z)n\phi(z)]_\cE$ for some $n\in
  N^+$, Lemma~\ref{lem:phiFlag} states that if
  $$\exp(c_V+\phi_{j+1}(z))<r<\exp(-c_V+\phi_j(z)),$$
  then $V(z,r)=Z_j\rho(z)^{-1}.$  In particular, if
  $$s(z)e^{-c_\phi-1}<r<s(z)e^{c_\phi+1}$$
  then $V(z,r)=Z_j\rho(z)^{-1}.$
  So if $z'\in\cE$ is distance at most $1/2$ from $z$, then
  $$|\log s(z)-\log s(z')|\le 1/2+c_\phi,$$
  and 
  $$V(z,s(z) e^{-c_\phi-1})\subset V(z,s(z') e^{-1/2})\subset V(z',s(z'))\subset V(z,s(z) e^{c_\phi1}),$$
  so $V(z',s(z'))=V(z,s(z))$.  Thus the function
  $z\mapsto V(z,s(z))$ is locally constant at each point of $B_j(c)$
  and thus it is constant on each connected component of $B_j(c)$.

  Say that $x\in B_j(c)$ and that $x\in g\cS$ for some $g\in \Gamma$.  We
  can write $x=[gna]_\cE$ for some $n\in N^+$, $a=\diagmat(a_1,\dots,a_p)\in A^+$, and
  Cor.~\ref{cor:Hausdorff} implies that $d_{A}(a,\phi(x))\le c_\phi$
  and thus $|\log a_i-\phi_i(x)|\le c_\phi$ for all $i$.  In particular, 
  $$e^{c_V}a_{j+1}<s(x)<e^{-c_V}a_{j},$$
  so Lemma~\ref{lem:phiFlag} shows that $V(x,s(x))=Z_jg^{-1}.$

  In particular, if $y\in B_j(c)$ is in the same connected component
  as $x$ and if $y\in h\cS$, then $V(x,s(x))=V(y,s(y))$, so $Z_jg^{-1}=Z_jh^{-1},$
  and $g^{-1}h$ stabilizes $Z_j$.  This implies $g^{-1}h\in U(j,p-j)$ as desired.
\end{proof}

Since $r(x)\sim \phi_1(x)-\phi_p(x),$ if $r(x)$ is large, then $x\in
B_j(c)$ for some $j$.  As a consequence, if $x$ is deep in the cusp of
$\cM$, there is a large ball $B$ around $x$ such that $\rho(B)$ is
contained in a coset of a maximal parabolic subgroup (see Fig.\
\ref{fig:cusps}).  

\begin{figure}
  \def\svgwidth{4in}
  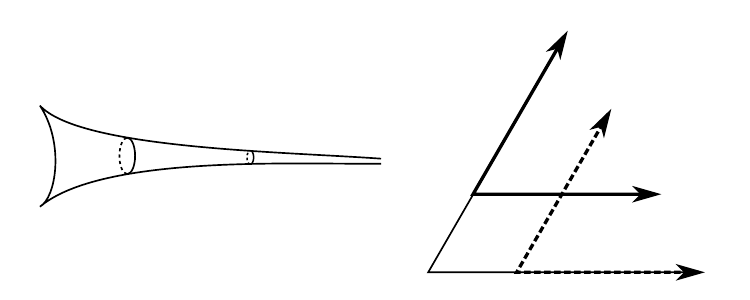
  \caption{\label{fig:cusps} Left:  $\cM$ for $p=2$.  Right: $A^+$ for
    $p=3$.  When $p=2$, $A^+$ is 1-dimensional, and the cusp has fundamental
    group $\Z$, conjugate to a parabolic subgroup.  When $p=3$, $A^+$
    is 2-dimensional, and the cusp is more complicated.  The marked
    regions correspond to $B_1(c)$ (bounded by solid lines) and
    $B_2(c)$ (bounded by dashed lines).  The images in $\SL(3;\Z)$ of
    the fundamental groups of $B_1(c)$ and $B_2(c)$ are parabolic subgroups of
    $\SL(3;\Z)$. }
\end{figure}

We claim:
\begin{cor} \label{cor:parabolicNbhds} There is a $c'>0$ such that if
  $x\in \cE$, $r(x)>c'$, and $B\subset \cE$ is the ball of radius
  $\frac{r(x)}{4p^2}$ around $x$, then $\rho(B)\subset g U(j,p-j)$ for
  some $g\in \Gamma$ and $1\le j\le p-1$.  Indeed, if $h\in \Gamma$ is such that $h\cS\cap B\ne
  \emptyset$, then $h\in g U(j,p-j)$.
\end{cor}
\begin{proof}
  We will find a $c'$ such that if $r(x)>c'$, then
  $$\frac{r(x)}{4p^2}<\frac{\phi_j(x)-\phi_{j+1}(x)-2c_\phi-c}{2}$$
  for some $j$, where $c$ is as in Lemma~\ref{lem:parabolicSubsets}.  If
  $y\in B$, then $x$ and $y$ are connected by a geodesic segment of
  length at most $r(x)/{4p^2}$, and if $z$ is a point on that segment,
  then
  $$|(\phi_j(z)-\phi_{j+1}(z))-(\phi_j(x)-\phi_{j+1}(x))|\le 2c_\phi+
  2\frac{r(x)}{4p^2},$$ so $z\in B_j(c)$, and $x$ and $y$ satisfy the
  conditions of Lemma~\ref{lem:parabolicSubsets}.

  Since $\sum_i \phi_i(x)=0$, we have
  $$|\phi_i(x)|\le p \max_j |\phi_j(x)-\phi_{j+1}(x)|$$
  for all $j$.  By Corollary~\ref{cor:Hausdorff}, 
  $$r(x)\le c_\phi+d_{A^+}(I,\phi(x)) \le c_\phi+ p^2 \max_j
  |\phi_j(x)-\phi_{j+1}(x)|,$$
  so there is a $j$ such that
  $$|\phi_j(x)-\phi_{j+1}(x)|\ge \frac{r(x)-c_\phi}{p^{2}}.$$
  However, by the definition of $A^+$,
  $\phi_j(x)-\phi_{j+1}(x)>\log \epsilon_{\cS}$ (see
  Lemma~\ref{lem:redThe}), so if $r(x)$ is sufficiently large, then 
  $$\phi_j(x)-\phi_{j+1}(x)\ge \frac{r(x)-c_\phi}{p^{2}}.$$
  If $r(x)$ is even larger, then
  $$\frac{r(x)}{4p^2}<\frac{\phi_j(x)-\phi_{j+1}(x)-2c_\phi-c}{2}$$
  as desired.
\end{proof}

In the next section, we will use this property of $r(x)$ to construct
a template.

\section{Reducing to maximal parabolic subgroups}\label{sec:redParaProof}
In this section, we will prove Lemma~\ref{lem:redPara} by constructing
a disc in $\cE=\SL(p;\R)/\SO(p)$, a triangulation of that disc, and a
template based on that triangulation.  The basic idea of the proof is
sketched in Sec.~\ref{subsec:redParaSketch}:  any curve in
$\cE$ can be filled by a Lipschitz disc, which might travel through
the thin part of $\cE$.  We triangulate the disc so that each triangle
lies in a single horoball, label the triangulation to get a template,
then bound the lengths of the words in the template.

Let $r:\cM\to \R$ be the depth function defined in
Sec.~\ref{sec:depthFunction}.  We will prove the following:
\begin{lem}\label{lem:templateExist}
  Let $q\in \Z$ and $q\ge 2$.  If $w=w_1\dots w_\ell$ is a word in
  $\Gamma$ which represents the identity, then there is a
  triangulation $\tau$ of a square of side length $\sim \ell$ with
  straight-line edges and a labelling of the vertices of $\tau$ by
  elements of $\Gamma$ such that the resulting template satisfies:
  \begin{enumerate}
  \item \label{lem:templateExist:length} If $g_1,g_2$ are the labels of an edge $e$ in the template,
    then 
    $$d_\Gamma(g_1,g_2)=O(\ell(e)),$$
    where $\ell(e)$ is the length of $e$ as a segment in the square.
  \item \label{lem:templateExist:para} There is a $c>0$ independent of
    $w$ such that if $g_1, g_2, g_3\in \Gamma$
    are the labels of a triangle in the template, then either
    $\diam\{g_1,g_2,g_3\}\le c$ or there is a $1\le k<q$ such that all of
    the $g_i$ are contained in the same coset of $U(k,q-k)$.
  \item \label{lem:templateExist:area}$\tau$ has $O(\ell^2)$
    triangles, and if the $i$th triangle of $\tau$ has vertices
    labeled $(g_{i1},g_{i2},g_{i3})$, then
    $$\sum_{i}(d_\Gamma(g_{i1},g_{i2})+d_\Gamma(g_{i1},g_{i3})+d_\Gamma(g_{i2},g_{i3}))^2=O(\ell^2).$$

    Similarly, if the $i$th edge of $\tau$ has vertices labeled
    $h_{i1}, h_{i2}$, then 
    $$\sum_{i}d_\Gamma(h_{i1},h_{i2})^2=O(\ell^2).$$
  \end{enumerate}
\end{lem}
This immediately implies Lemma~\ref{lem:redPara}.

We construct this template in the way described in
Section~\ref{subsec:redParaSketch}: we start with a filling of $w$ by
a Lipschitz disc $f:D^2\to \cE$, then construct a template for $w$ by
triangulating the disc and labelling its vertices using $\rho$.  We
ensure that properties \ref{lem:templateExist:length} and \ref{lem:templateExist:para} hold by carefully controlling the
lengths of edges.  If edges are too long, then property
\ref{lem:templateExist:para} will not hold.  On the other hand, if
$x,y\in \cE$, then $\rho(x)$ and $\rho(y)$ may be separated by up to
$\sim r(x)+r(y)+d_\cE(x,y)$, so if edges are too short, then
\ref{lem:templateExist:length} will not hold.  For both these
conditions, it suffices to construct a triangulation so that the
triangle containing $x$ has diameter roughly proportional to the depth
function $r(x)$.

We will need the following lemma, which cuts a square of side $2^k$
into dyadic squares whose side lengths are comparable to a Lipschitz
function $h$; that is, there is a $c>0$ such that if $S$ is one of the
subsquares, with side length $\sigma(S)$, then 
$$c^{-1} \min\{2^k,\min_{x\in S}h(x)\}\le \sigma(S)\le c \max_{x\in  S}h(x).$$
This is similar to the
decomposition used to prove the Whitney extension theorem, which,
given a closed set $K$, decomposes $\R^n\setminus K$ into cubes such
that for each cube $S$, the side length $\sigma(S)$ of $S$ satisfies
$\sigma(S)\sim d(S,K)$.

A \emph{dyadic square} is a square of the form
$$S_{i,j,s}:=[i 2^s,(i+1) 2^s]\times [j 2^s,(j+1) 2^s]$$
for some $i,j,s\in \Z$, $s\ge 0$.  We denote the set of dyadic squares
contained in $\sqD(t)$ by $\D_t$.  If $S$ is a square, let $\sigma(S)$
be its side length.
\begin{lemma}\label{lem:adaptiveSquares}
  Let $t=2^k$, $k\ge 0$, let $D^2(t)=[0,t]\times[0,t]$, and let
  $h:\sqD(t)\to \R$ be a $1$-Lipschitz function such that $h(x)\ge 1$
  for all $x$.  There is a set of dyadic squares $U$ such that:
  \begin{enumerate}
  \item $U$ covers $D^2(t)$, and any two squares in $U$ intersect only
    along their edges.  
  \item If $S\in U$, then 
    $$\min\biggl\{\frac{h(x)}{6},\frac{t}{2}\biggr\} \le \sigma(S) \le h(x)$$
    for all $x\in S$.
  \item Each square in $U$ neighbors no more than $16$ other squares.
  \end{enumerate}
\end{lemma}
\begin{proof}
  The dyadic squares can be arranged in a rooted tree whose root is
  $D^2(t)$ so that the children of a dyadic square of side length
  $2^s$, $s>1$ are the four squares of side length $2^{s-1}$ which it
  contains.  If $S$ is a dyadic square, let $a(S)$ be its parent square.  If $S$ and $T$ are
  dyadic squares whose interiors intersect, then one must be the
  ancestor of the other.  That is, either $S\subset T$ and $T=a^k(S)$
  for some $k$ or vice versa.

  Let
  $$U_0:=\{S \mid S\in \D_t \text{ and }  \sigma(S) \le h(x)\text{ for
    all } x\in S\}$$
  and let $U$ be the set of maximal elements in $U_0$:
  $$U:=\{S \mid S\in U_0 \text{ and } a^k(S)\not\in U_0 \text{ for all
    $k$}\}.$$
  We claim that this is the desired cover.

  First, we show that it is a cover of $\sqD(t)$.  If $x\in \sqD(t)$, then $x\in S$
  for some $S\in \D_t$ with $\sigma(S)=1$.  Since $h(z)\ge 1$ for all
  $z\in \sqD(t)$, we know that $S\in U_0$.  If $n$ is the largest
  integer such that $a^n(S)\in U_0$, then $a^n(S)\in U$.  So $x$ is
  contained in a square of $U$, and since $x$ was arbitrary, $U$ is a
  cover of $\sqD(t)$.

  Furthermore, if $S,T\in U$ intersect along more than an
  edge, then one must be an ancestor of the other.  Since $S$ and $T$
  are maximal elements of $U_0$, this means that $S=T$.

  Next, we prove property 2.  By the definition of $U_0$, if $S\in U$,
  then $\sigma(S) \le h(x)$ for all $x\in S$, so it remains to prove
  the lower bound on $\sigma(S)$.  If $S=D^2(t)$, then $\sigma(S)\ge
  t/2$, so the bound holds; otherwise, if $S\in U$, then $a(S)\not\in
  U_0$ by the definition of $U$, so there must be some $x_0\in a(S)$
  such that $h(x_0)< 2\sigma(S)$.  If $x\in S$, then $d(x,x_0)\le
  4\sigma(S)$, so $h(x)\le h(x_0)+d(x,x_0)< 6\sigma(S)$, as desired.

  Finally, we prove property 3.  Suppose that $S$ and $T$ neighbor
  each other and let $x\in S\cap T$.  By property 2, $\sigma(S)\le
  h(x)\le 6\sigma(T)$ and likewise $\sigma(T)\le h(x)\le 6\sigma(S)$.
  Indeed, since $S$ and $T$ are dyadic squares, we must have
  $\sigma(T)\le 4\sigma(S)\le 16\sigma(T)$, so each square in $U$ can
  be neighbors with at most 4 other squares on each side, for a total
  of 16.
\end{proof}

As a corollary, we obtain:
\begin{cor}\label{cor:adaptive}
  Let $t=2^k$, $k\ge 0$, let $D^2(t)=[0,t]\times[0,t]$, and let
  $h:\sqD(t)\to \R$ be a $1$-Lipschitz function such that $h(x)\ge 1$
  for all $x$.  There is a
  triangulation $\tau_h$ of $\sqD(t)$ such that
  \begin{enumerate}
  \item All vertices of $\tau_h$ are lattice points, and $\tau_h$
    contains no more than $2 t^2$ triangles.
  \item If $x$ and $y$ are connected by an edge of $\tau_h$, then 
    $$\min\{\frac{h(x)}{6},\frac{t}{2}\}\le d(x,y) \le\sqrt{2}h(x).$$
  \item If we consider $\tau_h^{(2)}$ to be the set of triangles of
    $\tau_h$, then 
    $$\sum_{\Delta\in \tau_h^{(2)}} \diam(\Delta)^2 \le 64 t^2.$$
    Likewise, if $\tau_h^{(1)}$ is the set of edges, then 
    $$\sum_{e\in \tau_h^{(1)}} \ell(e)^2 \le 128 t^2.$$
  \end{enumerate}
\end{cor}
\begin{proof}
  Let $U$ be the partition into squares constructed in
  Lemma~\ref{lem:adaptiveSquares}.  Two adjacent squares in $U$ need
  not intersect along an entire edge, so $U$ is generally not a
  polyhedron.  To fix this, we subdivide the edges of each square so
  that two distinct polygons in $U$ intersect either in a vertex, in
  an edge, or not at all; call the resulting polyhedron $U'$.  By
  replacing each $n$-gon in $U'$ with $n-2$ triangles, we obtain a
  triangulation, which we denote $\tau_h$.  We claim that this
  $\tau_h$ satisfies the required properties.

  The first property is clear; the vertices of any dyadic square are
  lattice points by definition, and the area of any triangle whose
  vertices are lattice points is at least $1/2$ by Pick's Theorem.

  The second property follows from the corresponding property of $U$.

  The third property follows from the fact that the number of
  neighbors of each square is bounded.  Since we divide each edge in
  $U$ into at most 4 edges of $U'$, each square $S$ of $U$ corresponds
  to at most $16$ triangles of $\tau_h$, each with diameter at most
  $2\sigma(S)$.  So if
  $\tau_h^{(2)}$ is the set of triangles of $\tau_h$, then
  $$\sum_{\Delta\in \tau_h^{(2)}} \diam(\Delta)^2 \le \sum_{S\in U}
  16(2 \sigma(S))^2.$$ Since $\sum_{S\in U} \sigma(S)^2=\area D^2(t)$,
  this is at most $64 t^2.$ Likewise, if $\tau_h^{(1)}$ is the set of
  edges, then
  $$\sum_{e\in \tau_h^{(1)}} \ell(e)^2 \le \sum_{S\in U}
  32(2 \sigma(S))^2= 128 t^2.$$
\end{proof}

We use this lemma to prove Lemma~\ref{lem:templateExist} by letting
$h(x)\sim r(x)$.

\begin{proof}[Proof of Lemma~\ref{lem:templateExist}]  
  Let $w(i)=w_1\dots w_i$.
  Let $\alpha:[0,\ell]\to \cE$ be the curve corresponding to $w,$
  parameterized so that $\alpha(i)=[w(i)]_\cE$.  If $c_\Sigma$ is the
  maximum length of a curve corresponding to a generator, then $\alpha$ is $c_\Sigma$-Lipschitz.
  Let $t=2^k$ be the smallest power of 2 larger than $\ell$, and let
  $\alpha':[0,t]\to\cE$
  $$\alpha'(x)=\begin{cases}\alpha(x) & \text{if $x\le \ell$} \\
    [I]_{\cE} & \text{otherwise}.
  \end{cases}$$
  Since $\cE$ is non-positively curved, we can use geodesics to fill
  $\alpha'$.  If $x,y\in \cE$, let $\gamma_{x,y}:[0,1]\to \cE$ be a
  geodesic parameterized so that $\gamma_{x,y}(0)=x$,
  $\gamma_{x,y}(1)=y$, and $\gamma_{x,y}$ has constant speed.  We can
  define a homotopy $f:[0,t] \times [0,t]\to \cE$ by
  $$f(x,y)=\gamma_{\alpha'(x),\alpha'(0)}(y/t);$$
  this sends three sides of $D:=[0,t]\times [0,t]$ to $[I]_\cE$ and is a filling of
  $\alpha$.  Since $\cE$ is non-positively curved, this map is
  $2c_\Sigma$-Lipschitz and has area $O(\ell^2)$.
  
  Let $h:D\to \R$,
  $$h(x)=\max\{1,\frac{r(f(x))}{16p^2 c_\Sigma}\}$$
  This function is $1$-Lipschitz.  If $h(x)$ is sufficiently large and
  $B\in D$ is a disc of radius $2h(x)$ around $x$, then $f(B)$ is
  contained in a ball of radius $r(f(x))/(4p^2)$ around $f(x)$.  By
  Cor.~\ref{cor:parabolicNbhds}, $\rho(f(B))$ is contained in a coset
  of a maximal parabolic subgroup.  Let $\tau_h$ be the triangulation
  of $D$ constructed in Cor.~\ref{cor:adaptive}.

  If $v$ is an interior vertex of $\tau_h$, label it $\rho(f(v))$.  If
  $(i,0)$ is a boundary vertex on the side of $D$ corresponding to
  $\alpha'$ and $i\le \ell$, label it by $w(i)$.  Label all the rest
  of the boundary vertices by $I$.  Note that for all vertices $v$, if $g$ is
  the label of $v$, then $f(v)\in g\cS$.

  If $x$ is a lattice point on the boundary of $D$, then
  $f(x)=[I]_\cM$ and so $h(x)=1$.  In particular, each lattice point
  on the boundary of $D$ is a vertex of $\tau_h$, so the
  boundary of $\tau_h$ is a $4t$-gon with vertices labeled
  $I,w(1),\dots,w(n-1),I\dots,I$.  We identify vertices
  labeled $I$ and remove self-edges to get a template $\tau$ for $w$.

  First, property \ref{lem:templateExist:length} follows from
  Lemma~\ref{lem:depthLength}.  That is, if $v_1$ and $v_2$ are the
  endpoints of an edge of $\tau$, labeled by $g_1$ and $g_2$, then
  $d(v_1,v_2)\sim r(f(v_1))\sim r(f(v_2))$, so by
  Lemma~\ref{lem:depthLength},
  $$d_{\Gamma}(g_1,g_2)=O(d(v_1,v_2)+r(f(v_1))+r(f(v_2)))=O(d(v_1,v_2))$$
  as desired.

  Second, note that if $x_1$, $x_2$, and $x_3$ are the vertices of a
  triangle of $\tau$, with labels $g_1$, $g_2$, and $g_3$, then
  Cor.~\ref{cor:adaptive} implies that $\diam\{x_1,x_2,x_3\}\le 2
  h(x_1)$.  If $h(x_1)$ is sufficiently large, then
  Cor.~\ref{cor:parabolicNbhds} shows that $g_1,g_2$, and $g_3$ are in
  the same coset of $U(j,p-j)$ for some $j$; otherwise, by property
  \ref{lem:templateExist:length}, $g_1$, $g_2$, and $g_3$ must be
  within bounded distance of one another.

  Finally, property \ref{lem:templateExist:area} follows from property
  \ref{lem:templateExist:length} and the corresponding property of $\tau_h$.
\end{proof}
Note that it is not necessary that $q\ge 5$ for this template to
exist.  In fact, a suitable generalization of the proposition should hold
for any lattice in a semisimple Lie group.

\section{Shortcuts and normal forms}\label{sec:normalForms}
As before, we let $\Gamma=\SL(p;\Z)$, with the generating set $\Sigma$
consisting of the unit transvections $e_{ij}=e_{ij}(1)$ and the
diagonal matrices.  In this section, we will define a normal form
$\omega:\Gamma\to \Sigma^*$ which associates each element of $\Gamma$
with a word in $\Gamma$ which represents it.  This normal form will
use short representatives of unipotent elements like those constructed
by Lubotzsky, Mozes, and Raghunathan \cite{LMRComptes}.

\subsection{Shortcuts}
Recall that $e_{ij}(x)$, $i\ne j$ represents the matrix which is
obtained from the identity matrix by replacing the $(i,j)$-entry with
$x$.  Lubotzsky, Mozes, and Raghunathan noted that when $p\ge 3$,
this group element can be represented by a word $\short{e}_{ij}(x)$ of
length $\sim \log |x|$, which we call a \emph{shortcut}.  Since the
particular generators used to construct a shortcut will be
important later on, we will define many different ways to shorten a
given transvection: if $S\subset \{1,\dots,p\}$ is a set such that
$i\in S$, $j\not\in S$, and $\#S\ge 2$, then
$\short{e}_{ij;S}(x)$ will be a shortcut for $e_{ij}(x)$ which is a
product of unit transvections lying in the parabolic subgroup
$U(S,\{j\})$.  More generally, recall that if $S, T\subset \{1,\dots,p\}$ are
disjoint and $V\in \R^S\otimes \R^T$, then $u(V)$ represents the
unipotent matrix in $U(S,T)$ corresponding to $V$.  If $\#S\ge 2$, we
will define a curve $\short{u}_S(V)$ which lies in a thick part of
$G$, goes from $I$ to $u(V)$, and has length $O(\log \|V\|_2)$.

We will provide a condensed version of the constructions of
$\short{e}_{ij;S}(x)$ and $\short{u}_S(V)$; for more details, see
\cite{LMRComptes} or \cite{RileyNav}.  We start by defining a solvable
subgroup $H_{S,T}\subset U(S,T)$ for each pair of disjoint sets $S,
T\subset \{1,\dots,p\}$.  Without loss of generality, we may take
$S=\{1,\dots,s\}$ and $T=\{s+1,\dots, s+t\}$.  Let $A$ and $B$ be
$\R$-split, $\Q$-anisotropic tori in $\SL(S)$ and $\SL(T)$
respectively; their integer points are isomorphic to $\Z^{s-1}$ and
$\Z^{t-1}$ respectively.  Let
$$H_{S,T}:=(A\times B)\ltimes (\R^S\otimes \R^T),$$
where $A$ acts on $\R^S$ on the left and $B$ acts on $\R^T$ on the
right.  Without loss of generality, we may take $S=\{1,\dots,s\}$ and
$T=\{s+1,\dots, s+t\}$, and write
$$H_{S,T}= \left\{\begin{pmatrix}M & V & 0\\
  0 & N & 0\\
  0 & 0 & I
\end{pmatrix}\middle| M\in A, N\in B, V\in \R^S\otimes \R^T\right
\}.$$ Note that the integer points of $H_{S,T}$ form a cocompact
lattice in $H_{S,T}$, so $H_{S,T}$ lies in a thick part of $\SL(p).$
We may conjugate $H_{S,T}$ so that $A$ and
$B$ become the subgroups of diagonal matrices with positive
coefficients; it follows that $\R^S\otimes \R^T$ is exponentially
distorted in $H_{S,T}$ as long as $\#S\ge 2$ or $\#T\ge 2$.

Thus, given any $V\in \R^S\otimes \R^T$, there is a curve in $H_{S,T}$ from $I$ to
$u(V)$ of length $\sim \log \|V\|_2$.
Note that this curve may depend on $S$ and $T$ as well as $V$.  The
following construction removes the dependence on $T$:
If $\#S\ge 2$, $S^c$ is the complement of $S$,
$V\in \R^S\otimes \R^{S^c}$, and $\{z_1,\dots, z_p\}$ is the standard
basis of $\R^p$, we can write $V=\sum_{i\in S^c} v_i \otimes z_i$ for
some vectors $v_i\in \R^S$.  For all $i\in S^c$, define $u_i:[0,1]\to
H_{S,S^c}$ to be a geodesic in $H_{S,\{i\}}$ which connects $I$ to
$u(v_i \otimes z_i)$ and define
$$\short{u}_S(V)=\prod_{i\in S^c} u_i.$$
This is a curve connecting $I$ to $u(V)$ which has length $O(\log
\|V\|_2)$.  Furthermore, if $T\subset \{1,p\}$ is such that $V\in
\R^S\otimes \R^T$, then $\short{u}_S(V)$ is a curve in $H_{S,T}$.  We
think of it as the result of using a torus in $\SL(S)$ to ``compress''
$u(V)$.

If $\#S\ge 2$, $i\in S$, $j\not\in S$, and $x\in \Z$, then
$\short{u}_S(x z_i\otimes z_j)$ is a curve in $H_{S,\{j\}}\subset
U(S,\{j\})$ which connects $I$ to $e_{ij}(x)$.  Let
$\short{e}_{ij;S}(x)$ be a word in $U(S,\{j\})$ approximating
$\short{u}_S(x z_i\otimes z_j)$.  Since $H_{S,\{j\}}$ lies in a thick
part of $U(S,\{j\})$, the length of $\short{e}_{ij;S}(x)$ is
comparable to the length of $\short{u}_{S}(xz_i\otimes z_j)$.  

In many cases, the precise value of $S$ does not matter, so for each
pair $(i,j)$, we choose a $d_{ij}$ such that $d_{ij}\not\in \{i,j\}$
and define $\short{e}_{ij}(x)=\short{e}_{ij;\{i,d_{ij}\}}(x)$.  As a
special case, for all $i,j$, we set $\short{e}_{ij}(\pm 1)=e_{ij}^{\pm 1}$.

\subsection{A normal form}
Recall that if $H\subset \Gamma$, we say that  $w$ is a \emph{shortcut word} in $H$ if we can
write $w=\prod_{i=1}^n w_i$, where each $w_i$ is either a diagonal
matrix in $H$ or a shortcut $\short{e}_{a_ib_i}(x_i)$ where
$e_{a_ib_i}(x_i)\in H$.  Note that any word in $H$ is automatically a
shortcut word, but not vice versa, since a shortcut for an element of
$H$ may use generators that are not in $H$.

We claim:
\begin{lemma}\label{lem:normalForm}
  There is a normal form $\omega:\Gamma\to \Sigma^*$ such that:
  \begin{enumerate}
  \item For all $g\in \Gamma$, $\ellw(\omega(g))=O(d_\Gamma(I,g))$.
  \item For all $i,j\in \{1,\dots, p\}$ with $i\ne j$ and $x\in \Z$, 
    $$\omega(e_{ij}(x))=\short{e}_{ij}(x).$$
  \item If $g\in P$ where $P=U(S_1,\dots,S_k)$ is a group of block
    upper-triangular matrices, then $\omega(g)$ is a product of a
    bounded number of shortcut words in the diagonal blocks of $P$,
    a bounded number of shortcuts corresponding to off-diagonal
    entries of $P$, and possibly one diagonal matrix.
  \end{enumerate}
\end{lemma}

\begin{proof}
  If $g=e_{ij}(x)$, we define $\omega(g)=\short{e}_{ij}(x)$; this
  satisfies all three conditions.  

  Otherwise, let $g\in \Gamma$ and let $P=U(S_1,\dots,S_k)\in \cP$ be
  the unique minimal $P\in \cP$ containing $g$.  Then $g$ is a
  block-upper-triangular matrix which can be written as a product
  \begin{equation}\label{eq:blockDecomp}
    g=\begin{pmatrix}
      m_1   & V_{12} & \dots  & V_{1k} \\
      0     & m_2     & \dots  & V_{2k} \\
      \vdots & \vdots  & \ddots & \vdots  \\
      0     & 0       & \dots  & m_k
    \end{pmatrix} d,
  \end{equation}
  where the $i$th block of the matrix corresponds to $S_i$.  Here,
  $V_{i,j}\in \Z^{S_i}\otimes \Z^{S_j}$, and $d\in D$ is a diagonal
  matrix chosen so that $\det m_i=1$.  If $P=\Gamma$, then there is only
  one block, and we take $m_1=g$, and $d=I$.  We can write $g$ as a product:
  $$\gamma_i:=\biggl(\prod_{j=1}^{i-1} u(V_{ji})\biggr) m_i$$
  $$g=\gamma_k\dots \gamma_1 d.$$
  We will construct $\omega(g)$ by replacing the terms in this product
  decomposition with shortcut words.

  First, consider the $m_k$.  When $\#S_k\ge 3$, we can use
  Thm.~\ref{thm:LMR} to replace $m_k$ by a word in $\SL(S_k;\Z)$, but the theorem does not apply when $\#S_k=2$ because
  $\SL(2;\Z)$ is exponentially distorted inside $\SL(p;\Z)$.  We thus
  use a variant of the Lubotzky-Mozes-Raghunathan theorem to write $m_k$
  as a shortcut word $\SL(2;\Z)$.
  \begin{prop}[{cf.\ \cite{LMRComptes}}]\label{prop:shortLMR}
    There is a constant $c$ such that for all $g\in \SL(k;\Z)$, there is
    a shortcut word in $\SL(k;\Z)$ which represents $g$ and has length
    $$\ellw(w)\le c\log \|g\|_2.$$
  \end{prop}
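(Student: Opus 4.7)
The plan is to deduce this directly from Theorem~\ref{thm:LMR}. The only manipulation needed is the observation that $\Sigma\subset\short{\Sigma}$ and that $\lambda$ acts as the identity on $\Sigma$, so for any word $w\in\Sigma^*$ we have $\short{\ell}(w)=\ellw(w)$. Thus any ordinary $\Sigma$-word of length $O(\log\|g\|_2)$ is automatically a $\short{\Sigma}$-word with the right bound on $\short{\ell}$.

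For $k\ge 3$, I would apply Theorem~\ref{thm:LMR} directly to $\SL(k;\Z)$, whose standard generating set $\Sigma\cap\SL(k;\Z)$ sits inside $\short{\Sigma}$. Thm.~\ref{thm:LMR} produces a word $w$ in these generators representing $g$ with
$$\ellw(w)\;\le\; c\,d_G(I,g)+c\;=\;O(\log\|g\|_2),$$
and then $\short{\ell}(w)=\ellw(w)$ yields the claim.

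For $k\le 2$, Theorem~\ref{thm:LMR} does not apply to $\SL(k;\Z)$ itself; in fact $\SL(2;\Z)$ has word length linear in $\|g\|_2$ with respect to its intrinsic generators (the element $e_{12}(n)$ has norm $\asymp n$ but word length $n$ in $\{e_{12}\}$), so it is exponentially distorted inside $\SL(p;\Z)$. To get around this I would instead apply Theorem~\ref{thm:LMR} to the ambient group $\SL(p;\Z)$, viewing $g\in\SL(k;\Z)$ as an element there. Since $p\ge 5$, LMR applies and produces a word $w\in\Sigma^*\subset\short{\Sigma}^*$ of $\ellw$-length $O(\log\|g\|_2)$ representing $g$, whence again $\short{\ell}(w)=\ellw(w)=O(\log\|g\|_2)$. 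The case $k=1$ is trivial.

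The substantive content lies entirely in Theorem~\ref{thm:LMR}, so the main (and only) obstacle is this deep result, which is cited from \cite{LMRComptes}. It is worth remarking that the proof of LMR proceeds by exactly the construction that underlies Lemma~\ref{lem:shortcuts}: one builds short paths to $e_{ij}(x)$ by conjugating $e_{ij}$ by powers of independent commuting matrices in $\SL(k;\Z)$ (for $k\ge 3$) or in $\SL(p;\Z)$ (when one needs to handle small $k$). This parallelism is precisely why the proposition is essentially LMR recast in the language of shortcuts.
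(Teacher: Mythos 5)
The paper offers no proof of this proposition at all --- it is quoted from \cite{LMRComptes} --- so the only question is whether your reduction to Theorem~\ref{thm:LMR} is sound. For $k\ge 3$ it is: applying Theorem~\ref{thm:LMR} to $\SL(k;\Z)$ gives a word in $\Sigma\cap\SL(k;\Z)\subset\short{\Sigma}$ of $\ellw$-length $O(d_G(I,g))=O(\log\|g\|_2)$, and $\short{\ell}$ agrees with $\ellw$ on such words. The gap is the case $k=2$, which is the only case with real content. Your word is obtained by applying Theorem~\ref{thm:LMR} to the ambient group $\SL(p;\Z)$, so it is a word in the full generating set $\Sigma$ and will in general use letters $e_{ij}$ with $i,j$ outside the relevant $2\times 2$ block. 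That satisfies the displayed statement only under the reading ``$w\in\short{\Sigma}^*$ with no constraint on which letters occur,'' but that is not how the proposition is used: the sentence immediately preceding it says the point is to write $m_k$ as a word in $\short{\Sigma}\cap\SL(S_k;\Z)$, and the normal form $\omega_0$ built from it is later manipulated (Sec.~\ref{sec:paraReduce}) under the assumption that each $\short{m_j}$ is a word in $\short{\Sigma}_{T_j}^*$, so that the block projections $p_i$ and Lemmas~\ref{lem:shortXiConj1} and \ref{lem:shortCommute} apply. If ambient generators were allowed there would be nothing to prove for any $k$ and no reason for the paper to invoke a ``variant'' of LMR; the exponential distortion of $\SL(2;\Z)$, which you correctly identify, is exactly the obstruction your argument sidesteps rather than overcomes.

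What is actually needed for $k=2$ is that every $g\in\SL(2;\Z)$ is represented by a word in $\short{\Sigma}_2=\short{\Sigma}\cap\SL(2;\Z)$ (letters $\sgen{12}{x}$, $\sgen{21}{x}$ and bounded diagonal/torsion elements) with $\short{\ell}$-length $O(\log\|g\|_2)$. This does not follow from Theorem~\ref{thm:LMR}; it is the cited variant. An elementary argument is available: by the Euclidean algorithm (continued fractions), $g$ is a bounded-torsion element times an alternating product $e_{12}(a_1)e_{21}(a_2)\cdots$ with $m=O(\log\|g\|_2)$ factors and $\prod_i\max\{1,|a_i|\}=O(\|g\|_2)$, hence $\sum_i\blog |a_i|=O(\log\|g\|_2)$; replacing each factor by the corresponding letter of $\short{\Sigma}_2$, whose $\short{\ell}$-length is $O(\blog|a_i|)$, gives the required word. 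You should either supply such an argument or cite the precise statement of \cite{LMRComptes} that provides it, rather than reduce to Theorem~\ref{thm:LMR} applied in the ambient group.
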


  For $i=1,\dots,k$, let $\short{m}_i$ be a shortcut word
  representing $m_i$ as in Prop.~\ref{prop:shortLMR}.  For $1\le i< j\le
  k$ and $V\in \Z^{S_i}\otimes \Z^{S_j}$, let
  $$\short{n}(V):=\prod_{a\in S_i, b\in
    S_j}\short{e}_{ab}(x_{ab}),$$
  where $x_{ab}$ is the $(a,b)$-coefficient of $V$; this is a shortcut
  word representing $u(V)$.

  Let
  $$\short{\gamma}_i:=\biggl(\prod_{j=1}^{i-1} \short{n}(V_{ji})\biggr) \short{m}_i$$
  $$\omega(g)=\short{\gamma}_k\dots \short{\gamma}_1d.$$
  This is a word in $\SL(p;\Z)$ which represents $g$.  It is
  straightforward to show that there is a constant $c_\omega$
  independent of $g$ such that $\ellw(\omega(g))\le c_\omega
  d_\Gamma(I,g)$.

  Furthermore, if $Q=U(T_1,\dots,T_r)$ is a block upper-triangular
  subgroup and $g\in Q$, then $Q\supset P$ and for every $i$, there is
  a $j$ such that $S_i\subset T_j$.  In particular, $\short{m}_i$ is a
  shortcut word in $\SL(T_j;\Z)$, and each shortcut making up
  $\short{V}_{ji}$ is either contained in $\SL(T_a)$ for some $a$ or
  corresponds to an off-diagonal entry of $Q$.  Since there are a
  bounded number of $\short{m}_i$ and a bounded number of terms in the
  $\short{V}_{ji}$, this normal form satisfies property 3.
\end{proof}

\subsection{Summary of notation}
For quick reference, we summarize the above constructions, which will
be used throughout the rest of the paper:
\begin{itemize}
\item $u(V)$ is the unipotent element in $U(S,T)$ corresponding to
  $V$.
\item $H_{S,T}$ is a solvable subgroup of $U(S,T)$, isomorphic to 
  $$(\R^{\#S-1}\times \R^{\#T-1})\ltimes (\R^S\otimes \R^T).$$
\item $\short{u}_S(V)$ is a curve in $H_{S,T}$ which connects $I$ and $u(V)$
\item If $i\in S$ and $j\not\in S$, then $\short{e}_{ij;S}(x)$ is a
  word in $U(S,\{j\})$ which represents $e_{ij}(x)$
\item For each $1\le i\ne j\le p$, we choose some $d_{ij}\not\in
  \{i,j\}$ and define
  $\short{e}_{ij}(x):=\short{e}_{ij;\{i,d_{ij}\}}(x)$.
\end{itemize}

\section{Manipulating shortcuts}\label{sec:shortManip}

In Section~\ref{sec:normalForms}, we constructed shortcuts $\short{e}_{ij;S}(x)$,
that is, words of logarithmic length which represent transvections
$e_{ij}(x)$.  The normal form $\omega$ is built using products of
these shortcuts, and in this section, we will develop ways to
manipulate such products.

\subsection{Moving shortcuts between solvable groups}

One of the main ideas behind these tools is that when $p$ is large, we
can construct shortcuts for $e_{ij}(x)$ which lie in small subgroups
of $\Gamma$ and we can construct quadratic-area homotopies from one to
another.  

This subsection is devoted to proving that when $p$ is large,
shortcuts which come from different solvable subgroups can be
connected by quadratic-area homotopies.  We will prove the following
lemma:
\begin{lem}\label{lem:shortEquiv}  
  If $p\ge 5$ and if $S\subset \{1,\dots, p\}$ is such that $2\le
  \#S \le p-2$, $i\in S$ and $j\not\in S$, then 
  $$\delta_{\Gamma}(\short{e}_{ij}(x),
  \short{e}_{ij;S}(x))=O((\log |x|)^2).$$
\end{lem}

We will use a
special case of a theorem of Leuzinger and Pittet on Dehn
functions of solvable groups.  Recall that the curves
$\short{u}_S(V)$ used to define the $\short{e}_{ij}$'s lie in
solvable subgroups of the form
$$H_{S,T}=(A\times B)\ltimes (\R^S\otimes \R^T),$$
where $A$ and $B$ are $\R$-split, $\Q$-anisotropic tori in $\SL(S)$ and $\SL(T)$
respectively.  These subgroups are contained in the thick part of $G$, and when
either $S$ or $T$ is large enough, results of Leuzinger and Pittet
\cite{LeuzPitQuad} imply that $H_{S,T}$ has
quadratic Dehn function (see also \cite{deCorTess}): 
\begin{thm}\label{thm:HDehn}
  If $s=\#S\ge 3$ or $t=\#T\ge 3$, then $H_{S,T}$ has a quadratic Dehn
  function.
\end{thm}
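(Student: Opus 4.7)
The plan is to exploit the structure of $H:=H_{S,T}$ as an abelian-by-abelian solvable group, following the template strategy developed earlier in the paper. Write $H=(A\times B)\ltimes U$ with $A=\R^{s-1}$, $B=\R^{t-1}$, and $U=\R^S\otimes \R^T$; the action of $A\times B$ on $U$ is diagonalizable with weights $\chi_{ij}\in (\R^{s-1+t-1})^*$ for $i\in S$, $j\in T$. Every $g\in H$ factors uniquely as $g=u(V)\cdot a\cdot b$ with $a\in A$, $b\in B$, $V\in U$, and I would define a normal form $\omega_H(g):=\short{u}(V;S,T)\cdot w_A(a)\cdot w_B(b)$, where $w_A,w_B$ are geodesic words in the abelian factors. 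By Lemma~\ref{lem:shortcuts}, $\ellw(\omega_H(g))=O(d_H(I,g))$.

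Applying Proposition~\ref{prop:dyadic} with $\alpha=2$, it then suffices to fill each $\omega_H$-triangle $\omega_H(g_1)\omega_H(g_2)\omega_H(g_1g_2)^{-1}$ of perimeter at most $r$ by a disc of area $O(r^2)$. Expanding each factor in normal form reduces this to a bounded number of elementary relations: commutations within $A\times B$ (trivial since it is abelian), commutations among the $\short{u}(V)$'s (trivial since $U$ is abelian), and the conjugation relations $a\cdot \short{u}(V)\cdot a^{-1}=\short{u}(aVa^{-1})$ for $a\in A\times B$ and $V\in U$. All of the content is in these conjugation relations.

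The main obstacle, and the step that uses the hypothesis $s\ge 3$ or $t\ge 3$, is filling the conjugation relation. Under $V\mapsto aVa^{-1}$ the weight component $x_{ij}\,v_i\otimes w_j$ is scaled by $\exp(\chi_{ij}(a))$, so some components of $V$ may be dilated while others are contracted along the straight-line path from $I$ to $a$, with no single direction of $A\times B$ contracting all weights simultaneously. The rank hypothesis guarantees a ``tameness'' condition on the weight configuration: for every weight $\chi_{ij}$ the remaining weights span a hyperplane transverse to it, so one may find a codimension-$1$ cone of sub-directions in $A\times B$ along which a chosen subset of weights all contract. Using this, one subdivides the path from $I$ to $a$ dyadically, realizing on each sub-path the contraction of a different subset of weights and using Lemma~\ref{lem:shortcuts} to rebuild the $\short{u}$-factor at each scale. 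Mirroring the shortcut construction of Lemma~\ref{lem:shortcuts} itself, this yields a disc of area $O((|a|+\blog\|V\|_2)^2)=O(r^2)$ filling each conjugation relation. Summing over the cells of the outer dyadic template then produces a quadratic filling of the original loop. Without the rank hypothesis $H_{S,T}$ degenerates into a Sol-type group, so this assumption is sharp.
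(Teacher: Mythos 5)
Your overall outline---normal form adapted to the semidirect product $(\R^{s-1}\times\R^{t-1})\ltimes(\R^S\otimes\R^T)$, reduction to $\omega_H$-triangles via Prop.~\ref{prop:dyadic}, and a further reduction to a few families of elementary relations---matches the paper's strategy, but you have misplaced where the difficulty (and the rank hypothesis) actually lives, and at that point your argument has a genuine gap. You dismiss the relations \emph{among the unipotent shortcuts} as ``trivial since $U$ is abelian.'' This is not true at the level of words/curves: the shortcut $\short{u}(V;S,T)$ is not a path inside $U=\R^S\otimes\R^T$; it is a concatenation of conjugated pieces $\gamma_{ij}(x)=D_i(\overline{x})\,e_{ij}(x/\overline{x})\,D_i(\overline{x})^{-1}$ that travel far out along the diagonal subgroup. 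Since $U$ is exponentially distorted in $H_{S,T}$, filling a triangle $\short{u}(V_1)\short{u}(V_2)\short{u}(V_1+V_2)^{-1}$ (or a commutator $[\gamma_{ij}(x),\gamma_{kl}(y)]$) inside $U$ itself would cost exponential area; commutativity of $U$ buys you nothing. In the paper this is exactly the hard step, and it is exactly where $s\ge 3$ (or $t\ge 3$) enters: each such relation is pushed into a subgroup isomorphic to $\Sol_{2s-1}=\R^{s-1}\ltimes\R^{s}$, which has quadratic Dehn function precisely when $s\ge 3$, together with a separate argument for the same-row case $i=k$, $j\ne l$. Your proposal contains no substitute for this step.

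Conversely, the conjugation relations $a\,\short{u}(V)\,a^{-1}=\short{u}(aVa^{-1})$, which you identify as ``the main obstacle'' requiring a tameness/cone argument with dyadic subdivision of the diagonal path, do not in fact need the rank hypothesis. Because the shortcut already compresses each weight component to a bounded elementary matrix, one conjugates component by component and fills each piece with a thin rectangle $(a,b)\mapsto e_{ij}(bx_1)D^{a}$: along a one-parameter diagonal path the single weight $\chi_{ij}$ varies monotonically, so the rectangle stays uniformly thin since both endpoint entries have absolute value at most $1$. No choice of contracting cone of directions is needed. Moreover, after conjugating the components separately you must reassemble them into $\short{u}(aVa^{-1})$, which circles back to the addition relations you declared trivial. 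So the argument as written both omits the step that genuinely uses $s\ge 3$ or $t\ge 3$ and spends its effort (in a vague form---``tameness,'' ``codimension-$1$ cone,'' ``rebuild at each scale'' are not made precise) on a step that a thin-rectangle computation handles directly. Your closing remark about sharpness is essentially right: for $s=t=2$ the weights occur in opposite pairs and the group behaves like $\Sol$, with exponential Dehn function.
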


We use manipulations in $H_{S,T}$ to prove the following lemma:
\begin{lem} \label{lem:xiConj} 
  Let $S\subset \{1,\dots,p\}$ be such that $2\le \#S\le p-2$ and let $T=S^c$ be the complement of
  $S$.  Let $0<\epsilon<1/2$ be sufficiently
  small that $H_{S,T}\subset G(\epsilon)$.  If $\gamma$ is a curve in the $\epsilon$-thick part
  of $\SL(S)\times \SL(T)$ which connects $(I,I)$ to $(M,N)$, and if
  $V\in \R^S\otimes \R^{T}$, then
  $$\delta_{G(\epsilon')}(\gamma \short{u}_S(V)
  \gamma^{-1},\short{u}_S(MVN^{-1}))= O((\ellw(\gamma)+\log
  (\|V\|_2+2))^2).$$
  where $\epsilon'$ is independent of $\gamma$ and $V$.
\end{lem}
\begin{proof}
  Let $s=\#S$ and $t=\#T$.  Let $A$ and $B$ be the tori used to define
  $H_{S,T}$ and let Let $\{v_1,\dots,v_s\}\subset \R^S$ and
  $\{w_1,\dots,w_t\}\subset \R^T$ be the corresponding eigenbases of
  $\R^S$ and $\R^T$.

  We first consider the case that $\gamma$ is a curve in
  $\SL(T)$ and that $V=x v_i\otimes w_j$.  In this case $M=I$ and we want to fill the curve
  $$\omega:=\gamma \short{u}_S(V) \gamma^{-1}\short{u}_S(VN^{-1})^{-1}.$$
  Let $\ell=\ellc(\omega)$.
  Let $\delta=\exp(-\ellw(\gamma))$ and let $D\in A$ be such
  that $\|x D v_i\|_2\le \delta$; we can choose $D$ so that
  $d_{A}(I,D)=O(\ell)$.  If we conjugate $\omega$ by
  $D$, it becomes easy to fill.  We will fill $\omega$
  with a disc of the form shown in Figure~\ref{fig:hstfill}.

  \begin{figure}
    \def\svgwidth{3.5in}
    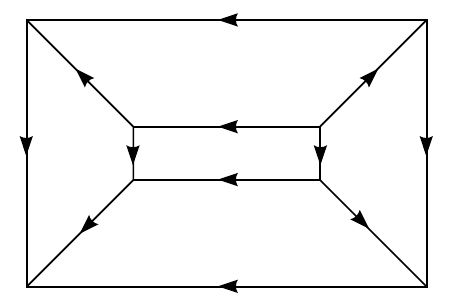
    \caption{\label{fig:hstfill} A filling of $\omega=\gamma \short{u}_S(V) \gamma^{-1}\short{u}_S(VN^{-1})^{-1}$.}
  \end{figure}

  This disc is comprised of four trapezoids and a central ``thin
  rectangle''.  Each of the edges labeled $D$ corresponds to a
  translate of the geodesic in $A$ which connects $I$ to $D$,
  and each edge has length at most $O(\ell)$.  Each trapezoid
  can be filled with quadratic area.  The left and right trapezoids
  are contained in $H_{S,T}$, so they have quadratic filling area in
  $H_{S,T}$; furthermore, since $H_{S,T}$ lies in the thick part of
  $G$, the filling stays in the thick part.  The top and bottom
  trapezoids each represent a commutator of a curve in $\SL(S)$ and a
  curve in $\SL(T)$, and so can be filled by the rectangle resulting
  from the product of those curves.  Each curve stays in some thick
  part of $\SL(S)$ or $\SL(T)$, so the rectangle does as well.

  The central thin rectangle can be filled by a disc of area
  $O(\ell)$.  Call the edges labeled by $\gamma$ the ``long
  edges'' of the rectangle.  Since $\delta$ is small, these
  long edges synchronously fellow travel, and since they lie in the
  thick part they can be filled by a
  disc in the thick part of area $O(\ell)$.  This gives a
  quadratic filling of $\omega$.

  The same technique works if instead we have $\gamma:[0,1]\to \SL(S)$
  and $V=x v_i\otimes w_j$.  The main change is that $D$ is now a
  matrix in $B$ such that $\|x w_j D\|_2\le \delta$.

  Now suppose $\gamma$ is a curve in $\SL(S)\times \SL(T)$.  It
  can be homotoped to a concatenation of curves
  $\gamma=\gamma_S\gamma_T$, where $\gamma_S$ and $\gamma_T$ are the
  projections of $\gamma$ to each factor.  This homotopy can be taken
  to have quadratic area and lie in the thick part of $G$, and the
  lemma can be applied to $\gamma_S$ and $\gamma_T$ separately.  This
  proves the lemma in the case that $V=x v_i\otimes w_j$.

  In general, we can decompose $V$ as a sum of eigenvectors
  $V=\sum_{i,j} x_{ij} v_i\otimes w_j$, so we will use the quadratic
  Dehn function of $H_{S,T}$ to break $\short{u}_S(V)$ up into pieces
  corresponding to each eigenvector and apply the lemma to each piece.
  We can construct a homotopy from
  $\gamma \short{u}_S(V)\gamma^{-1}$ to $\short{u}_S(MVN^{-1})$ which
  goes through the following stages:
  \begin{align*}
    &\gamma \short{u}_S(V)\gamma^{-1} & \\
&  \gamma \biggl(\prod_{i,j} \short{u}_S(x_{ij} v_i\otimes
    w_j)\biggr)\gamma^{-1} & & \text{by Thm.~\ref{thm:HDehn}} \\
    &\prod_{i,j} \gamma \short{u}_S(x_{ij} v_i\otimes w_j)\gamma ^{-1}
     && \text{by free insertions} \\
    &\prod_{i,j} \short{u}_S(M(x_{ij} v_i\otimes w_j)N^{-1}) && 
    \text{by the arguments above}\\
    &    \short{u}_S(MVN^{-1}) &  &\text{by Thm.~\ref{thm:HDehn}}
  \end{align*}
  Each stage has quadratic area, so the homotopy as a whole has
  quadratic area.
\end{proof}

Let  $\{z_1,\dots,z_p\}$ be the standard basis of $\R^p$.
Lemma~\ref{lem:shortEquiv} then follows from the following lemma:
\begin{lem}\label{lem:fullShortEquiv}
  There is an $0<\epsilon<1/2$ such that if $i\in S,S'$ and $j\not\in
  S\cup S'$, where $2\le \#S,\#S'\le p-2$, and if $x\in\R$, then
  $$\delta_{G(\epsilon)}(\short{u}_S(x z_i\otimes z_j),\short{u}_{S'}(x
  z_i\otimes z_j))=O((\log |x|)^2).$$

  In particular, 
  $$\delta_{\Gamma}(\short{e}_{ij}(x),
  \short{e}_{ij;S}(x))=O((\log |x|)^2).$$
\end{lem}
\begin{proof}
  First, consider the case that $S\subset S'$.

  Let $T=S^c$
  be the complement of $S$ and $T'=(S')^c$ be the complement of $S'$.
  Then $\short{u}_S(x z_i\otimes z_j)$ is a curve in
  $H_{S,T}$, and $H_{S,T}$ and $H_{S',T'}$ both have quadratic Dehn functions.  Let
  $s=\#S$, $t=\#T$.
  
  Recall that $A=\R^{s-1}\subset H_{S,T}$ 
  is an $\R$-split, $\Q$-anisotropic torus in $\SL(S)$; let $A(\Z)$ be 
  the integer points of $A$.  We can decompose $x z_i$ as a sum
  of eigenvectors $x z_i=\sum_k v_k$ and ``compress'' each term in
  this sum using $A$.  That is, there are vectors $y_k\in \R^S$ such
  that $\|y_k\|_2\le 1$ and elements $A_k\in A(\Z)$ such that
  $v_k=A_ky_k$ and $d_{A}(I,A_k)=O(\log |x|)$.
  Then
  $$e_{ij}(x)=\prod_k A_k u(y_k\otimes z_j)A^{-1}_k.$$
  Let $\gamma_k$, $k=1,\dots, s$ be a geodesic in $A$ which connects
  $I$ to $A_k$ and has length $O(\log |x|)$.  Let $\U_k:[0,1]\to G$
  be the curve $\U_k(t)=u(t y_k\otimes z_j)$.  We can then construct a
  curve
  $$\omega=\prod_k \gamma_k \U_k \gamma_k^{-1}$$
  which connects $I$ to $e_{ij}(x)$.

  We can use $\omega$ as an intermediate stage in a homotopy between
  $\short{u}_S(x z_i\otimes z_j)$ and $\short{u}_{S'}(x z_i\otimes
  z_j)$.  On one hand, $\omega$ lies in $H_{S,T}$ and has length
  $O(\log |x|)$, so there is a quadratic-area homotopy from
  $\short{u}_S(x z_i\otimes z_j)$ to $\omega$.  On the other hand,
  since $\gamma_k$ lies in a thick part of $\SL(S)$, $\omega$ also lies in a
  thick part of $\SL(S')$, and we can apply Lemma~\ref{lem:xiConj} to
  each term of $\omega$ to construct a homotopy from $\omega$ to
  $$\omega'=\prod_k \short{u}_{S'}(v_k \otimes z_j).$$
  This is a curve in $H_{S',T'}$ of length $O(\log |x|)$, so there is
  a quadratic-area homotopy from $\omega'$ to $\short{u}_{S'}(x z_i\otimes z_j)$.
  Concatenating these homotopies produces a homotopy from $\short{u}_{S}(x
  z_i\otimes z_j)$ to $\short{u}_{S'}(x z_i\otimes z_j)$ as desired.
  
  For the general case, let $k\in S$ and $k'\in S'$ be such that $i\ne
  k$, $i\ne k'$.  Then, if $V=x z_i\otimes z_j$, we can use the
  argument above to construct a homotopy from $\short{u}_{S}(V)$ to
  $\short{u}_{S'}(V)$ which goes through the stages
  $$\short{u}_{S}(V)\to \short{u}_{\{i,k\}}(V)\to \short{u}_{\{i,k,k'\}}(V)\to \short{u}_{\{i,k'\}}(V)\to \short{u}_{S'}(V)$$
  (if $k=k'$, the middle stages can be omitted).
 
  Since $\Gamma$ acts geometrically on $G(\epsilon)$ and
  $\short{e}_{ij;S}(x)$ is an approximation of $\short{u}_S(x
  z_i\otimes z_j)$, this implies that 
  $$\delta_{\Gamma}(\short{e}_{ij;S}(x),
  \short{e}_{ij;S'}(x))=O((\log |x|)^2),$$
  and in particular, 
  $$\delta_{\Gamma}(\short{e}_{ij}(x),
  \short{e}_{ij;S}(x))=O((\log |x|)^2).$$
\end{proof}

We also note the following corollary which will be useful later:
\begin{cor}\label{cor:uShortEquiv}
  If $S,S',T \subset \{1,\dots p\}$ are such that $2\le \#S,\#S'\le
  p-2$, $S\cap T=\emptyset$, and $S'\cap T=\emptyset$, and if $V\in
  \R^{S\cap S'}\otimes \R^T$, then there is an $0<\epsilon<1/2$ such
  that 
  $$\delta_{G(\epsilon)}(\short{u}_S(V),\short{u}_{S'}(V))=O((\log \|V\|_2)^2).$$
\end{cor}
\begin{proof}
  For all $i\in S\cap S'$ and $j\in T$, let $v_{ij}$ be the
  coefficient of $V$ in the $(i,j)$-position.  Let
  $$\omega=\prod_{i,j} \short{u}_S(v_{ij} z_i\otimes z_j)$$
  $$\omega'=\prod_{i,j} \short{u}_{S'}(v_{ij} z_i\otimes z_j).$$
  Then $\omega$ is a curve in $H_{S,S^c}$ with length $O(\log
  \|V\|_2)$, so there is a quadratic-area homotopy from
  $\short{u}_S(V)$ to $\omega$ and likewise from $\omega'$ to
  $\short{u}_{S'}(V)$.  By Lemma~\ref{lem:fullShortEquiv}, there is a
  quadratic-area homotopy from $\omega$ to $\omega'$.  Combining these
  homotopies proves the corollary.
\end{proof}

Lemma~\ref{lem:relToTri} is then a corollary of Lemma~\ref{lem:shortEquiv}:
\begin{proof}[Proof of Lemma~\ref{lem:relToTri}]\label{sec:relToTriProof}
Suppose that $w$ is a shortcut word in $\SL(q;\Z)$ and $q\ge 5$.  We
can write $w=\prod_{i=1}^n w_i$, where each $w_i$ is either a diagonal
matrix in $H$ or a shortcut $\short{e}_{a_ib_i}(x_i)$ where
$e_{a_ib_i}(x_i)\in \SL(q;\Z)$.  Since $q\ge 3$, we can use
Lemma~\ref{lem:shortEquiv} to replace each shortcut
$\short{e}_{a_ib_i}(x_i)$ by a shortcut $\short{e}_{a_ib_i;S_i}(x_i)$
which lies in $\SL(q;\Z)$ at a total cost of order $O(\ellw(w)^2)$.
The result is a word $w'$ in $\SL(q;\Z)$ of length $O(\ellw(w))$, and 
$\delta(w,w')=O(\ell(w)^2)$ as desired.
\end{proof}

\subsection{The shortened Steinberg presentation}

The Steinberg presentation gives relations between products of
elementary matrices; in this section, we will develop ways to
manipulate the corresponding shortcut words.  

This subsection is devoted to building an analogue of the Steinberg
presentation for shortcut words.  We will prove the following lemma:
\begin{lem}[The shortened Steinberg presentation]\label{lem:infPres} If $x,y\in \Z\setminus
  \{0\}$, then
  \begin{enumerate}
  \item \label{lem:infPres:add}
    If $1\le i,j\le p$ and $i\ne j$, then
    $$\delta_{\Gamma}(\short{e}_{ij}(x)\short{e}_{ij}(y),\short{e}_{ij}(x+y))=O((\log |x|+\log |y|)^2).$$
    In particular,
    $$\delta_{\Gamma}(\short{e}_{ij}(x)\short{e}_{ij}(-x))=\delta_{\Gamma}(\short{e}_{ij}(x)^{-1},\short{e}_{ij}(-x))=O((\log |x|)^2).$$
  \item \label{lem:infPres:multiply}
    If $1\le i,j,k\le p$ and $i\ne j\ne k$, then
    $$\delta_{\Gamma}([\short{e}_{ij}(x),\short{e}_{jk}(y)],\short{e}_{ik}(xy))= O((\log |x|+\log |y|)^2).$$
  \item \label{lem:infPres:commute}
    If $1\le i,j,k,l\le p$, $i\ne l$, and $j\ne k$
    $$\delta_{\Gamma}([\short{e}_{ij}(x),\short{e}_{kl}(y)])=O((\log |x|+\log |y|)^2).$$
  \item \label{lem:infPres:swap}
    Let $1\le i,j,k,l\le p$, $i\ne j$, and $k\ne l$, and 
    $$s_{ij}=e_{ji}^{-1}e_{ij}e_{ji}^{-1},$$
    so that $s_{ij}$ represents
    $$\begin{pmatrix} 0 & 1 \\ -1 & 0
    \end{pmatrix}\in\SL(\{i,j\};\Z).$$  Then
    $$\delta_{\Gamma}(s_{ij} \short{e}_{kl}(x)
    s^{-1}_{ij},\short{e}_{\sigma(k)\sigma(l)}(\tau(k,l)x))=O( (\log
    |x|+\log |y|)^2),$$
    where $\sigma$ is the permutation switching
    $i$ and $j$, and $\tau(k,l)=-1$ if $k=i$ or $l=i$ and $1$
    otherwise.
  \item \label{lem:infPres:diag} If $b=\diagmat(b_1,\dots,b_p)$, then 
    $$\delta_{\Gamma}(b \short{e}_{ij}(x) b^{-1},\short{e}_{ij}(b_i b_j x))=O( \log |x|^2).$$  \end{enumerate}
\end{lem}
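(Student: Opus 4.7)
The plan is to deduce each of the five identities as a null-homotopy that takes place inside a single solvable subgroup $H_{S, T}$ from Lemma~\ref{lem:shortcuts}, exploiting the quadratic Dehn function provided by Theorem~\ref{thm:HDehn} whenever $\#S \ge 3$ or $\#T \ge 3$. Since $p \ge 5$, such a split is always available. Lemma~\ref{lem:shortEquiv} lets us substitute $\short{e}_{ab}(z)$ by $\short{u}(zz_a\otimes z_b; S', T')$ for any compatible $(S', T')$ at cost $O((\log|z|)^2)$, so the real task is to pick $(S, T)$ appropriately in each part and, where all of the elementary matrices cannot be packed into a single $H_{S, T}$, to invoke Lemma~\ref{lem:xiConj} to handle the residual conjugation.

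Parts (1) and (3) are direct. For (1), pick $S \ni i$ with $\#S \ge 3$ and $T \ni j$ partitioning $\{1, \ldots, p\}$; once all three shortcuts are translated to their $(S, T)$-form, their concatenation is a null-homotopic curve in $H_{S, T}$ and Theorem~\ref{thm:HDehn} provides the quadratic filling, with the stated corollaries following by taking $y = -x$. Part (3) divides into subcases depending on whether $\{i,j\}$ and $\{k,l\}$ share a row index, share a column index, or are disjoint; in each subcase one finds a single $H_{S, T}$ containing both $e_{ij}$ and $e_{kl}$ as curves, whose commutator is null-homotopic by Theorem~\ref{thm:HDehn}. Parts (4) and (5) apply Lemma~\ref{lem:xiConj} directly: $s_{ij} = e_{ji}^{-1}e_{ij}e_{ji}^{-1}$ is a bounded-length word in $\Sigma_S^*$ for any $S \supset \{i, j\}$, and a diagonal matrix $b \in D$ factors (after bounded-area sign adjustments, since the entries are $\pm 1$) as $b_Sb_T \in \SL(S;\Z) \times \SL(T;\Z)$. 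The matrix computations $s_{ij}(xz_k \otimes z_l)s_{ij}^{-1} = \tau(k,l)\,x z_{\sigma(k)} \otimes z_{\sigma(l)}$ and $b_Sb_T(xz_i\otimes z_j)b_T^{-1}b_S^{-1} = b_ib_j\,xz_i \otimes z_j$ then deliver the required identifications. The one degenerate subcase of (4) where $\{k, l\} = \{i, j\}$ is reduced to the non-degenerate subcases by first using part (2) to rewrite $\short{e}_{ij}(x)$ as a commutator involving an auxiliary third index, then applying (4) to each factor.

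The main obstacle is part (2), the commutator identity $[\short{e}_{ij}(x), \short{e}_{jk}(y)] \simeq \short{e}_{ik}(xy)$. The three matrices $e_{ij}, e_{jk}, e_{ik}$ cannot lie in any single $H_{S, T}$: $e_{ij}$ and $e_{ik}$ force $i \in S$ and $j, k \in T$, but then $e_{jk}(y)$'s $T$-block is not of the diagonal form $\prod B_r^{y_r}$ demanded by $H_{S, T}$. The fix is to treat $\short{e}_{jk}(y)$ as a conjugating word rather than as a curve in $H_{S, T}$. Choose $S \supset \{i\}$ with $\#S \ge 2$ and $T \supset \{j, k, d\}$ with $d \notin \{i, j, k\}$ (possible because $p \ge 5$), so that $\#T \ge 3$ and Theorem~\ref{thm:HDehn} applies to $H_{S, T}$. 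By Lemma~\ref{lem:shortEquiv}, replace $\short{e}_{ij}(x)$ and $\short{e}_{ik}(xy)$ by their $(S, T)$-versions (curves in $H_{S,T}$) and replace $\short{e}_{jk}(y)$ by $\short{e}_{jk; \{j, d\}, \{k\}}(y)$, a word of length $O(\log|y|)$ lying in $\Sigma_T^*$. Apply Lemma~\ref{lem:xiConj} with $\gamma = \short{e}_{jk; \{j, d\}, \{k\}}(y)$ and $V = xz_i\otimes z_j$: the computation $V \cdot e_{jk}(y)^{-1} = xz_i\otimes z_j - xyz_i\otimes z_k$ gives
\[
\short{e}_{jk}(y)\,\short{u}(xz_i\otimes z_j; S, T)\,\short{e}_{jk}(y)^{-1} \;\simeq\; \short{u}(xz_i\otimes z_j - xyz_i\otimes z_k; S, T)
\]
at cost $O((\log|x| + \log|y|)^2)$. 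A final application of Theorem~\ref{thm:HDehn} inside $H_{S, T}$ splits the right-hand side as $\short{u}(xz_i\otimes z_j; S, T)\,\short{u}(-xyz_i\otimes z_k; S, T)$, and rearranging the conjugation into a commutator (using part (1) to identify $\short{u}(-xyz_i\otimes z_k; S, T)$ with $\short{e}_{ik}(xy)^{-1}$) produces the desired identity at the required quadratic cost.
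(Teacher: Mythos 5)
Your argument is, in substance, the paper's own: everything is pushed into a single solvable group $H_{S,T}$ via Lemma~\ref{lem:shortEquiv} and filled there by Theorem~\ref{thm:HDehn}, with Lemma~\ref{lem:xiConj} supplying the conjugations. In part (2) you and the paper make mirror-image choices of which elementary matrix plays the conjugator: you conjugate $\short{u}(xz_i\otimes z_j;S,T)$ by the $\Sigma_T^*$-word for $e_{jk}(y)$ (acting on the right, giving $V\mapsto VN^{-1}$), while the paper conjugates $\short{u}(yz_j\otimes z_k;S,S^c)$ by the $\Sigma_S^*$-word for $e_{ij}(x)$ with $S=\{i,j,d\}$; the two are interchangeable and cost the same. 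Your treatment of parts (1) and (3), and of the degenerate case $\{k,l\}=\{i,j\}$ of (4) (rewrite $\short{e}_{ij}(x)$ as a commutator via (2), conjugate each factor by the already-established cases, recombine via (2)) coincides with the paper's; for the non-degenerate cases of (4) you apply Lemma~\ref{lem:xiConj} uniformly where the paper mixes part (3) with Lemma~\ref{lem:xiConj}, which is an immaterial difference.

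The one step that would fail as written is part (5). You assert that $b\in D$ factors, ``after bounded-area sign adjustments,'' as $b_Sb_T\in\SL(S;\Z)\times\SL(T;\Z)$. The entries of $b$ are $\pm1$ with an even number of $-1$'s overall, but the $-1$'s need not split evenly between $S$ and $T$: for $b=\diagmat(-1,1,\dots,1,-1)$ with $i=1\in S$ and $j=p\in T$ one has $\det(b|_S)=\det(b|_T)=-1$, and then no adjustment of the diagonal factorization lands in $\SL(S;\Z)\times\SL(T;\Z)$, so Lemma~\ref{lem:xiConj} cannot be invoked as you state. The repair is cheap and uses tools you already have: either deduce (5) from (4) as the paper does, writing $b$ as a product of boundedly many elements $s_{kl}$; or, in the bad-determinant case, first split off one swap, writing $b=s_{ac}^{2}b'$ with $a\in S\setminus\{i\}$, $c\in T\setminus\{j\}$ (possible since $p\ge 5$), note that conjugation by $s_{ac}$ fixes $\short{e}_{ij}(x)$ up to quadratic cost by the all-indices-distinct case of (4), and observe that $b'$ now restricts to determinant-one diagonal matrices on both $S$ and $T$, so your computation $b'(xz_i\otimes z_j)b'^{-1}=b_ib_j\,xz_i\otimes z_j$ and Lemma~\ref{lem:xiConj} finish the argument.
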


\begin{proof}
  Part \ref{lem:infPres:add} follows from Theorem~\ref{thm:HDehn}.
  Recall that there is a $d_{ij}$ such that
  $\short{e}_{ij}(x)=\short{e}_{ij;\{i,d_{ij}\}}(x)$.  Let
  $S=\{i,d_{ij}\}$.  Then
  $\short{e}_{ij}(x)\short{e}_{ij}(y)\short{e}_{ij}(x+y)^{-1}$ is an
  approximation of a closed curve in $H_{S,S^c}$ of length $O((\log
  |x|+\log |y|)^2).$ Since $H_{S,S^c}$ has quadratic Dehn function, it
  can be filled by a quadratic-area disc.

  The rest of the parts of Lemma~\ref{lem:infPres} involve conjugating
  a shortcut by a word.  Most of the proofs below follow the same
  basic outline: to conjugate $\short{e}_{ij}(x)$ by a word $w$
  representing a matrix $M$, first choose $S$ such that $M\in \SL(S)$, where
  $i\in S$ and $j\not \in S$.  Next, replace $w$ with a word $w'$ in
  $\SL(S)$ and replace $\short{e}_{ij}(x)$ by $\short{e}_{ij;S}(x)$.  Finally, use
  Lemma~\ref{lem:xiConj} to conjugate $\short{e}_{ij;S}(x)$ by $w'$.

  \noindent Part \ref{lem:infPres:multiply}:

  Let $d\not\in \{i,j,k\}$ and let $S=\{i,j,d\}$, so that
  $\short{e}_{ij;\{i,d\}}(x)$ is a word in $\SL(S;\Z)$.  We construct
  a homotopy going through the stages
  \begin{align*}
    \omega_0&=  [\short{e}_{ij}(x),\short{e}_{jk}(y)]\short{e}_{ik}(xy)^{-1} \\
    \omega_1&=  [\short{e}_{ij;\{i,d\}}(x),\short{u}_S(y z_{j}\otimes z_{k})]\short{e}_{ik;S}(xy)^{-1}\\
    \omega_2&=  \short{u}_S((xy z_i+y z_{j})\otimes z_{k})\short{u}_S(y z_{j}\otimes z_{k})^{-1}\short{u}_S(xy z_{i}\otimes z_k)^{-1}.
  \end{align*}
  Here, we use Lemma~\ref{lem:shortEquiv} to construct a homotopy
  between $\omega_0$ and $\omega_1$.  The homotopy between $\omega_1$
  and $\omega_2$ is an application of Lemma~\ref{lem:xiConj} with
  $\gamma=\short{e}_{ij;\{i,d\},\{j\}}(x)$ and $V=y z_{j}\otimes
  z_{k}$.  Finally, $\omega_2$ is a curve in $H_{S,S^c}$ with length
  $O(\log |x|+\log |y|)$, and thus has filling area $O((\log |x|+\log
  |y|)^2)$.  The total area used is $O((\log |x|+\log |y|)^2)$.
  
  \noindent Part \ref{lem:infPres:commute}:

  Let $S=\{i,k,d\}$.  We use the same
  techniques to construct a homotopy going through the stages
  \begin{align*}
    & [\short{e}_{ij}(x),\short{e}_{kl}(y)]\\
    & [\short{e}_{ij;S}(x),\short{e}_{kl;S}(y)] & & \text{by Lem.~\ref{lem:shortEquiv}}\\
    & \emptyword& & \text{by Thm.~\ref{thm:HDehn} applied to $H_{S,S^c}$},
  \end{align*}
  where $\emptyword$ represents the empty word.
  This homotopy has area $O((\log |x|+\log |y|)^2)$.

  \noindent Part \ref{lem:infPres:swap}:

  We consider several cases depending on $k$ and $l$.  When
  $i,j,k,$ and $l$ are distinct, the result follows from part \ref{lem:infPres:commute},
  since $s_{ij}=e_{ji}^{-1}e_{ij}e_{ji}^{-1}$, and we can use part
  \ref{lem:infPres:commute} to commute each letter past $\short{e}_{kl}(x)$.  If $k=i$ and
  $l\ne j$, let $d\not\in \{i,j,l\}$, and let
  $S=\{i,j,d\}$.  There is a homotopy from
  $$s_{ij} \short{e}_{il}(x) s^{-1}_{ij}\short{e}_{jl}(-x)^{-1}$$
  to
  $$s_{ij} \short{u}_S(x z_i\otimes z_l) s^{-1}_{ij}\short{e}_{jl}(x z_j\otimes z_l)$$
  of area $O( (\log |x|)^2),$ and since $s_{ij}$ is a word in $\SL(S;\Z)$, the
  proposition follows by an application of Lemma \ref{lem:xiConj}.  A
  similar argument applies to the cases $k=j$ and $l\ne i$; $k\ne i$
  and $l= j$; and $k\ne j$ and $l= i$.

  If $i=k$ and $j=l$, let $d\not \in \{i,j\}$.
  There is a homotopy going through the stages
  \begin{align*}
&    s_{ij} \short{e}_{ij}(x) s^{-1}_{ij} & \\
&    s_{ij} [e_{id},\short{e}_{dj}(x)] s^{-1}_{ij}& & \text{ by part (\ref{lem:infPres:multiply})}\\
&    [s_{ij}e_{id}s^{-1}_{ij},s_{ij}\short{e}_{dj}(x)s^{-1}_{ij}]&  &\text{ by free insertion}\\
&    [e_{jd}^{-1},\short{e}_{di}(x)] & & \text{ by previous cases}\\
&    \short{e}_{ji}(-x) & &\text{ by part (\ref{lem:infPres:multiply})}
  \end{align*}
  and this homotopy has area $O( (\log |x|)^2)$.  One can treat the
  case that $i=l$ and $j=k$ the same way.

  Since any diagonal matrix in $\Gamma$ is the product of at most
  $p$ elements $s_{ij}$, part \ref{lem:infPres:diag} follows from part \ref{lem:infPres:swap}.
\end{proof}

\section{Reducing to diagonal blocks}\label{sec:redDiagProof}

In this section, we work to prove Lemma~\ref{lem:redDiag}, which
claims that we can break an $\omega$-triangle in
a block upper-triangular subgroup $U(S_1,\dots,S_k)\subset
\SL(p;\Z)$ into shortcut words in the blocks
$\SL(S_i;\Z)$ on the diagonal.  As before, we let $G=\SL(p;\R)$ and $\Gamma=\SL(p;\Z)$.

Let $P:=U(S_1,\dots,S_k)$ and let $P^+\subset P$ be the
finite-index subgroup consisting of matrices in $P$ whose diagonal
blocks all have determinant 1.  Let $K:=\times_i \SL(S_i;\Z)$
and consider the map $\eta:P^+\to K$ which sends an element of $P^+$ to
its diagonal blocks.  If $N=\ker \eta$, we can write $P^+$ as a
semidirect product $P^+=K\ltimes N$.

In most cases, one can prove Lemma~\ref{lem:redDiag} in two stages:
first, break an $\omega$-triangle in $P^+$ into shortcut words in $K$
and $N$, then fill the resulting shortcut words.  This is harder to do
when $P=U(p-1,1)$ or $U(1,p-1)$, because we can't use
Lemma~\ref{lem:xiConj} to manipulate the shortcut words.  It is
possible to use Lemma~\ref{lem:infPres} to conjugate unipotent
matrices by words in $\SL(p-1)$ one generator at a time, but this
produces a cubic bound on the Dehn function rather than a quadratic
bound.  Instead, we will use the methods of Sec.~\ref{sec:redParaProof}
to break words in $U(p-1,1)$ and $U(1,p-1)$ into $\omega$-triangles in
smaller parabolic subgroups.  In the next subsection, we consider the
case that $\#S_i \le p-2$ for all $i$, and in
Sec.~\ref{subsec:specialCase}, we will consider the case of $U(p-1,1)$
and $U(1,p-1)$.

\subsection{Case 1: Small $S_i$'s}\label{subsec:Case1}

Let $P$, $P^+$, $K$, and $N$ be as above.  

The goal of this section is to prove:
\begin{prop}\label{prop:paraReduceSmall}  
  Let $P=U(S_1,\dots,S_k)$, where $\#S_i\le p-2$ for all
  $i$.  If $g_1,g_2,g_3\in P$ and $g_1g_2g_3=1$, let
  $$w=\omega(g_1)\omega(g_2)\omega(g_3).$$
  Then we can break $w$ into words $v_1,\dots, v_k$ at cost
  $O(\ell(w)^2)$, where $v_i$ is a shortcut word in $\SL(S_i;\Z)$ and
  $\ell(v_i)=O(\ell(w))$.
\end{prop}

We will prove this by breaking $w$ into a product of a shortcut word
in $K$ and a shortcut word in $N$, then filling each of these shortcut
words.  

If $g\in \Gamma$ and $Q=U(T_1,\dots,T_r)$ is the minimal element of
$\cP$ containing $g$, then $\omega(g)$ is a product of a shortcut word
in $\SL(T_i)$ for each $i$, at most $p^2$ shortcuts
$\short{e}_{ij}(x_{ij})$ (one for each entry above the diagonal), and
a diagonal matrix.  If $g\in P$, then $Q\subset P$, so each $T_i$ is a
subset of some $S_j$.  In particular, each shortcut word in $\SL(T_i)$
is also a shortcut word in some $\SL(S_j)$, and each transvection
$e_{ij}(x)$ with $i>j$ is either an element of some
$\SL(S_j)$ or an element of $N$.  Consequently, we can consider
$w$ as a product of at most $3p^2$ shortcut words in the
$\SL(S_i)$, at most $3p^2$ shortcuts $\short{e}_{ij}(x_{ij})$ such
that $e_{ij}(x_{ij})\in N$, and three diagonal matrices.  

If $V\in \Z^{S_i}\otimes \Z^{S_j}$, let
$$\short{n}_{ij}(V):=\prod_{a\in S_i, b\in S_j}\short{e}_{ab}(v_{ab}).$$
Call a shortcut word in one of the $\SL(S_i)$ a \emph{diagonal word}
and call a word of the form $\short{n}_{ab}(V)$ an \emph{off-diagonal
  block}.  If $e_{ij}(x)\in N$, then $\short{e}_{ij}(x)$ is an
off-diagonal block, so $w$ is a product of up to 3 diagonal matrices,
up to $3p^2$ diagonal words, and up to $3p^2$ off-diagonal blocks.

We can use this terminology to describe the proof of Prop.~\ref{prop:paraReduceSmall}:
\begin{enumerate}
\item Break $w$ into $w_K$, a product of boundedly many diagonal
  words, and $w_N$, a product of boundedly many off-diagonal blocks. (Cor.~\ref{cor:splitDiag})
\item Break $w_K$ into one diagonal word for each $S_i$. (Cor.~\ref{cor:splitWords})
\item Fill $w_N$ using Lemma~\ref{lem:infPres}.  (Lem.~\ref{lem:shortNP})
\end{enumerate}

First, though, we rid ourselves of the diagonal matrices in $w$.
Lemma~\ref{lem:infPres} lets us move diagonal matrices past shortcuts,
so we can shift the diagonal matrices to the beginning of $w$ using
$O(\ellw(w)^2)$ applications of relations.  Since each diagonal word
and off-diagonal block represents an element of $P^+$, the product of
the diagonal matrices is a diagonal matrix which is an element of
$P^+$.  Replace the three diagonal matrices with the product of $k$
diagonal matrices, one in each $\SL(S_i;\Z)$ (we think of each of these as a
diagonal word with one letter).  The resulting word,
which we call $w'$, is the product of up to $3p^2+p$ diagonal words
and up to $3p^2$ off-diagonal blocks.

The next step is to separate the diagonal words and the off-diagonal
blocks.  We need the following lemma:
\begin{lemma}\label{lem:collectDiags}
  Assume, as above, that $\#S_i\le p-2$ for all $i$. 
  If $v$ is a shortcut word in $\SL(S_a)$ which represents $M$ and
  $V\in \Z^{S_b}\otimes \Z^{S_c}$ for some $1\le b<c\le k$, then
  \begin{enumerate}
  \item \label{lem:collectDiags:ab} If $a=b$, then 
    $$\delta_\Gamma(v\short{n}_{bc}(V)v^{-1},\short{n}_{bc}(MV))=O((\ellw(v)+\log
    \|V\|_2)^2)$$
  \item \label{lem:collectDiags:ac} If $a=c$, then
    $$\delta_\Gamma(v\short{n}_{bc}(V)v^{-1},\short{n}_{bc}(VM^{-1}))=O((\ellw(v)+\log
    \|V\|_2)^2)$$
  \item \label{lem:collectDiags:distinct}
    If $a$, $b$, and $c$ are distinct, then 
    $$\delta_\Gamma([v,\short{n}_{bc}(V)])=O((\ellw(v)+\log
    \|V\|_2)^2)$$
  \end{enumerate}
\end{lemma}
\noindent \textit{Remark:} If we instead assume that $3\le \#S_i\le
p-3$ for all $i$, proving the lemma becomes much simpler.  For
example, if $3\le \#S_a\le p-3$, we can prove part
\ref{lem:collectDiags:distinct} by replacing the shortcuts in $v$ by
words in $\SL(S_a;\Z)$ and replacing the shortcuts in
$\short{n}_{bc}(V)$ by words in $\SL((S_a)^c;\Z)$.  Since the
corresponding sets of generators commute, we can commute the words at
quadratic cost.  When $\#S_a$ is particularly large or small, though,
we need to use more involved methods.

\begin{proof}
  We may assume that $\#S_a\ge 2$; otherwise, $v$ would be the empty
  word.  

  \noindent Parts \ref{lem:collectDiags:ab} and \ref{lem:collectDiags:ac}:

  We will mainly consider part \ref{lem:collectDiags:ab}; part
  \ref{lem:collectDiags:ac} is essentialy symmetric.

  Since $\short{n}_{bc}(V)$ is a product of shortcuts, we will
  show that
  $$\delta_\Gamma(v\short{e}_{ij}(x)v^{-1},\short{n}_{bc}(x Mz_i
  \otimes z_j))=O((\ellw(v)+\log |x|)^2)$$
  for every $x\in \Z$, $i\in S_b$, $j\in S_c$ and then apply that to
  each term of $\short{n}_{bc}(V)$.  

  Let $\omega_0=v\short{e}_{ij}(x)v^{-1}$.  First, we use
  Lemma~\ref{lem:shortEquiv} to replace the shortcuts in $v$ and
  $v^{-1}$ by words in $\SL(S)$ for some $S$.  If $\#S_b\ge 3$, we can
  take $S=S_b$, otherwise, take $l\in \{1,\dots,p\}$ such that
  $l\not\in S$, $l\ne j$, and let $S=S_b\cup \{l\}$.  Call the
  resulting word $v'$.  We can use the same lemma to replace
  $\short{e}_{ij}(x)$ by $\short{u}_S(x z_i \otimes z_j)$,
  transforming $\omega_0$ to
  $$\omega_1=v'\short{u}_S(x z_i \otimes z_j)(v')^{-1}.$$

  Finally, Lemma~\ref{lem:xiConj} applies to $\omega_1$, so we can
  transform it to $\short{u}_S(x M z_i \otimes z_j)$.  Since this is a
  curve in $H_{S,S^c}$, which has a quadratic Dehn function, we can
  use Theorem~\ref{thm:HDehn} and Lemma~\ref{lem:shortEquiv} to
  transform this to $\short{n}_{bc}(x M z_i \otimes z_j)$.  

  Applying this result to each term of $\short{n}_{bc}(V)$, we can
  transform $v\short{n}_{bc}(V)v^{-1}$ to
  $$\prod_{i,j} \short{n}_{bc}(v_{ij} Mz_i\otimes z_j)$$
  We can apply parts \ref{lem:infPres:add} and
  \ref{lem:infPres:commute} of Lemma~\ref{lem:infPres} to reduce this to 
  $\short{n}_{bc}(MV)$ as desired.  Part \ref{lem:collectDiags:ac}
  follows similarly.

  \noindent Part \ref{lem:collectDiags:distinct}:

  Since $\short{n}_{bc}(V)$ is a product of at most $p^2$ shortcuts,
  it suffices to show that if $v\in \SL(S)$ and $m,n\not\in S$, then
  $$\delta_\Gamma([v,\short{e}_{mn}(x)])=O((\ellw(v)+\log |x|)^2).$$
  If $2\le \#S\le p-3$, then we can use Lemma~\ref{lem:shortEquiv} to
  replace $v$ with a word in $\SL(S\cup \{m\})$ and prove the lemma by
  applying Lemma~\ref{lem:xiConj} to $H_{S\cup \{m\},(S\cup
    \{m\})^c}$.  It just remains to consider the case that $\#S=p-2$.

  Without loss of generality, we may take $S=\{2,\dots,p-1\}$.  Since
  $\#S\ge 3$, we can use Lemma~\ref{lem:shortEquiv} to replace $v$
  with a word $v'$ in $\SL(S)$.  We claim that 
  $$\delta_\Gamma([v',\short{e}_{1p}(x)])=O((\ellw(v)+\log |x|)^2).$$
  We will construct a homotopy from $v'\short{e}_{1p}(x)(v')^{-1}$ to
  $\short{e}_{1p}(x)$ through the curves
  \begin{align*}
    & v'[e_{12}(1),\short{e}_{2p}(x)](v')^{-1} & & \text{by
      Lemmas~\ref{lem:shortEquiv} and \ref{lem:infPres}} \\
    & [v'e_{12}(1){(v')}^{-1},v' \short{e}_{2p}(x){(v')}^{-1}] & & \text{by
      free insertion}\\
    &
    \biggl[\prod_{i=2}^{p-1}\short{e}_{1i}(m_i),\prod_{i=2}^{p-1}\short{e}_{ip}(n_i)\biggr]
    & & \text{by Lemma~\ref{lem:xiConj}}\\
    & \short{e}_{1p}(\sum_i m_i n_i)=\short{e}_{1p}(x) & &\text{by Lemma~\ref{lem:infPres}}
  \end{align*}
  Here, $m_i$ and $n_i$ are the coefficients of $Mz_2$ and $z_2M^{-1}$
  respectively.    The total cost of these steps is at most $O((\ellw(v)+\log
  |x|)^2)$.  The last step needs some explanation.  Let 
  $$w_1=\prod_{i=2}^{p-1}\short{e}_{1i}(m_i)$$
  $$w_2=\prod_{i=2}^{p-1}\short{e}_{ip}(n_i),$$
  so we are transforming $[w_1,w_2]$ to $\short{e}_{1p}(x)$.  Each
  term $\short{e}_{1i}(m_i)$ of $w_1$ commutes with every term of
  $w_1$ and $w_2$ except for $\short{e}_{ip}(n_i)$ and its inverse,
  and Lemma~\ref{lem:infPres} lets us transform
  $[\short{e}_{1i}(m_i),\short{e}_{ip}(n_i)]$ to
  $\short{e}_{1p}(m_in_i)$.  This commutes with every term of $w_1$
  and $w_2$.  So, if we use Lemma~\ref{lem:infPres} to move terms of
  $w_1$ past $w_2$, the only new terms that appear are of this form,
  so once we get rid of all of the terms of $w_1$ and $w_2$, we are
  left with 
  $$\prod_{i=2}^{p-1} \short{e}_{1p}(m_i n_i).$$
  Since $\sum_i m_in_i=z_2 M^{-1}\cdot M x z_2=x$, we can use
  Lemma~\ref{lem:infPres} to convert this to $\short{e}_{1p}(x)$.  All
  of the coefficients in this process are bounded by $\|M\|_2^2 x$,
  and $\|M\|_2$ is exponential in $\ellw(w)$, so this step
  has cost $O((\ell(w)+\log |x|)^2)$.  This concludes the proof.
\end{proof}

In particular, this lets us break $w$ into a product of diagonal
words and a product of off-diagonal blocks:
\begin{cor}\label{cor:splitDiag}
  If $g_1,g_2,g_3\in P$, $g_1g_2g_3=1$, and
  $$w=\omega(g_1)\omega(g_2)\omega(g_3),$$
  then there are words $w_K$ and $w_N$ such that $w_K$ is a product of
  at most $3p^2+p$ diagonal words, $w_N$ is a
  product of at most $3p^2$ off-diagonal blocks,
  $\ellw(w_K)=O(\ellw(w))$, $\ellw(w_N)=O(\ellw(w))$, and 
  $$\delta_\Gamma(w,w_Kw_N)=O(\ellw(w)^2).$$
\end{cor}
\begin{proof}
  As we noted before Lemma~\ref{lem:collectDiags}, it takes
  $O(\ellw(w)^2)$ applications of relations to replace $w$ by a word
  $w'$ which is a product of at most $3p^2+p$ diagonal words and at
  most $3p^2$ off-diagonal blocks.  Lemma~\ref{lem:collectDiags} lets
  us move diagonal words past off-diagonal blocks.  This process will
  affect the coefficients of these off-diagonal blocks, but it is
  straightforward to check that these coefficients remain bounded by
  $e^{\ellw(w)}$ throughout the entire process.  Thus, moving a
  diagonal word past an off-diagonal block always has cost
  $O(\ellw(w)^2)$.  We start with a bounded number of diagonal words
  and off-diagonal blocks, and no additional terms are created in the
  process, so we use Lemma~\ref{lem:collectDiags} only boundedly many
  times and the total cost remains $O(\ellw(w)^2)$.  The resulting
  word can be broken into a product of $3p^2+p$ diagonal words, which
  we call $w_K$, and a product of at most $3p^2$ off-diagonal blocks,
  which we call $w_N$.
\end{proof}

Next, we sort the diagonal words so that all the shortcut words in
$\SL(S_i;\Z)$ are grouped together for $i=1,\dots,k$.  We use the
following lemma:
\begin{lemma}\label{lem:shortCommute}
  Let $S, T\subset \{1,\dots, p\}$ be disjoint subsets such that
  $\#S,\#T\le p-2$.  Let $w_S$ be a shortcut word $\SL(S;\Z)$ and let
  $w_T$ be a shortcut word in $\SL(T;\Z)$.  Then
  $$\delta_\Gamma([w_S, w_T])=O((\ellw(w_S)+\ellw(w_T))^2).$$
\end{lemma}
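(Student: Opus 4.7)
My plan is to fill $[w_S, w_T]$ by presenting it as the boundary of a rectangular disc, modeled on the fact that $\SL(S;\R)$ and $\SL(T;\R)$ commute as subgroups of $\SL(p;\R)$ whenever $S$ and $T$ are disjoint. If the expansions $\lambda(w_S)$ and $\lambda(w_T)$ can be arranged to be supported on disjoint index sets, then every pair of letters commutes as a matrix identity via a single bounded-area Steinberg relator (available from the finite presentation of $\SL(p;\Z)$), and arranging these relators in an $\short{\ell}(w_S)\times\short{\ell}(w_T)$ rectangular grid produces a filling of area $O(\short{\ell}(w_S)\cdot\short{\ell}(w_T))\le O((\short{\ell}(w_S)+\short{\ell}(w_T))^2)$.

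The first step is therefore to use Lemma~\ref{lem:shortEquiv} to rewrite each shortcut $\short{e}_{ab}(x)$ occurring in $\lambda(w_S)$ so that its auxiliary index lies inside an enlarged set $S^+\supseteq S$ disjoint from a corresponding $T^+\supseteq T$.  When $\#S\ge 3$ I may take $S^+=S$, choosing auxiliaries in $S\setminus\{a,b\}$; when $\#S=2$ and $(S\cup T)^c\ne\emptyset$ I set $S^+=S\cup\{d\}$ for any $d\in(S\cup T)^c$.  I perform the symmetric rewriting for $w_T$.  Each replacement costs $O((\log|x|)^2)$, and since $\log|x_i|\ge 0$ the inequality $\sum_i(\log|x_i|)^2 \le (\sum_i\log|x_i|)^2$ shows the total rewriting cost is $O(\short{\ell}(w_S)^2+\short{\ell}(w_T)^2)$.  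Once $\lambda(w_S)\in\Sigma_{S^+}^*$ and $\lambda(w_T)\in\Sigma_{T^+}^*$ with $S^+\cap T^+=\emptyset$, the rectangular grid filling goes through as above.

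The main obstacle is the edge case $\#S=2$ together with $S\cup T=\{1,\dots,p\}$, in which every admissible auxiliary index for $w_S$'s shortcuts lies inside $T$ and the supports of $\lambda(w_S)$ and $\lambda(w_T)$ are forced to overlap.  Since $p\ge 5$ we have $\#T=p-2\ge 3$; I pick $d\in T$, set $S^+=S\cup\{d\}$ and $T^+=T\setminus\{d\}$, and confine $\lambda(w_S)$ to $\Sigma_{S^+}^*$ as before.  The residual interactions are between cross-letters of $\lambda(w_S)$ involving $d$ and letters of $\lambda(w_T)$ also involving $d$; naive letter-by-letter commutation gives only a cubic bound, because the commutator $[e_{ad},e_{dc}]=e_{ac}$ is nontrivial.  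To recover a quadratic bound, the plan is to reorganize $\lambda(w_S)$ so that all $d$-involving cross-letters are bundled as a single $\short{u}$-expression with respect to the decomposition $(S,T)$, and then apply Lemma~\ref{lem:xiConj} with $\gamma$ equal to $w_T$.  Since $w_T\in\SL(T;\Z)$ fixes $\R^S$, the matrix-level conjugation of the bundled $\short{u}$-term is trivial, and Lemma~\ref{lem:xiConj} supplies the needed quadratic-cost filling for the path-level discrepancy.  Carrying out this reorganization while keeping track of the interleaved non-cross letters of $\lambda(w_S)$, and verifying that the bookkeeping telescopes to a quadratic total, is the technical heart of the proof.
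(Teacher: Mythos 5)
Your treatment of the easy cases (both expanded supports can be made disjoint) is fine and matches the paper's, but the argument you sketch for the genuinely hard case does not work as described. With $\#S=2$, $d\in T$, the $d$-involving letters of $\lambda(w_S)$ cannot be ``bundled as a single $\short{u}$-expression with respect to $(S,T)$'': the expansion $\short{e}_{ab;\{a,d\},\{b\}}(x)$ with $a,b\in S$ consists of conjugates of the form $A^{c}u(\cdot)A^{-c}$ with $A\in\SL(\{a,d\})$, so its $d$-letters include $e_{da}$- and $e_{db}$-type letters (which lie in $\R^{T}\otimes\R^{S}$, the \emph{opposite} block) as well as the semisimple $\SL(\{a,d\})$ factors; none of these is of the form $u(V;S,T)$, and pulling them out of the sandwiching $A^{\pm c}$ destroys the logarithmic compression (coefficients blow up exponentially). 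Moreover, even for a genuine $\short{u}(V;S,T)$ with $V\in\R^{S}\otimes\R^{T}$, conjugation by $w_T$ representing $N\in\SL(T;\Z)$ is \emph{not} trivial: Lemma~\ref{lem:xiConj} gives $\short{u}(VN^{-1})$, not $\short{u}(V)$; what $w_T$ centralizes is $\SL(S)$ itself, not the cross block. So the mechanism you rely on for the quadratic bound does not exist as stated. Even under a charitable repair (one bundle, hence one application of Lemma~\ref{lem:xiConj}, per letter of $w_S$, with $\gamma$ a $\Sigma_T$-form of $w_T$), each application costs $O((\ellw(\gamma)+\log\|V\|_2)^2)$, which is quadratic in the length of $w_T$; summing over the letters of $w_S$ reproduces exactly the cubic bound you are trying to avoid. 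You also leave unhandled the sub-case $\#S=\#T=2$ with $p=5$, where disjoint enlargements $S^{+},T^{+}$ do not exist at all; the paper sidesteps this by first enlarging one of the two sets to the complement of the other.

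The paper's proof supplies the two ingredients your outline is missing. First, each shortcut on the size-$2$ side is rewritten as a commutator of two shortcuts, each having one index in the big set: e.g.\ $\short{e}_{1p}(t)\simeq[e_{12}(1),\short{e}_{2p}(t)]$ with $2\in S$. Conjugating both factors by a $\Sigma_S$-word $w$ representing $M$ via Lemma~\ref{lem:xiConj} turns this into a commutator of $\short{u}(z_1\otimes z_2M^{-1})$ and $\short{u}(Mtz_2\otimes z_p)$, which collapses back to $\short{e}_{1p}(t)$ using the identity $z_2M^{-1}\cdot Mtz_2=t$, all at cost $O((\ellw(w)+\log|t|)^2)$. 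Second, a subdivision trick: cutting $w$ into blocks of length about $\log|t|$ improves the cost of moving one shortcut past $w$ to $O((\log|t|)^2+\ellw(w)\log|t|)$, and these per-letter costs sum to $O((\short{\ell}(w_S)+\short{\ell}(w_T))^2)$. Without some version of this subdivision, any scheme whose per-letter cost is quadratic in the length of the long word gives only a cubic bound, so the ``bookkeeping'' you defer is not a routine verification but the actual content of the lemma.
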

\begin{proof}
  If $\#S\ge 3$ and $\#T\ge 3$, we can use Lemma~\ref{lem:shortEquiv}
  to replace $w_S$ and $w_T$ by words in $\SL(S;\Z)$ and $\SL(T;\Z)$,
  then commute the resulting words letter by letter.  Similarly, if
  $\#S=1$ or $\#T=1$, then $w_S$ or $w_T$ is trivial.  It remains only
  to study the case that one of $\#S$ and $\#T$ is 2.  Without loss of
  generality, we take $S=\{1,2\}$ and $T=S^c$.
  Consider the case that $w_T$ is a word in $\SL(S^c;\Z)$.

  Let $v=w_T$ be a word in $\SL(S^c;\Z)$ and consider
  $\delta_\Gamma([v,w_S])$.  Since $w_S$ is a shortcut word, we can
  write it as a product $w_S=w_1\dots w_n$ of diagonal matrices and
  shortcuts.  If $w_i$ is a diagonal matrix, it commutes with each
  letter of $v$; otherwise, we can bound the filling area of $[v,w_i]$ using
  part \ref{lem:collectDiags:distinct} of Lemma~\ref{lem:collectDiags},
  which states that
  $$\delta_\Gamma([w_i,v])\le c (\ellw(w_i)+\ellw(v))^2,$$
  so
  $$\delta_\Gamma([w_S,v])\le \sum_i \delta_\Gamma([w_i,v])\le c n (\ellw(w_S)+\ellw(v))^2.$$
  To get rid of the extra $n$, we need a slightly better bound.
  
  When $\ellw(v)\ge \ellw(w_i)$, we can get a stronger bound
  on $\delta_\Gamma([w_i,v])$ by breaking $v$ into segments of length
  $\sim \ellw(w_i)$.  Let $v=v_1\dots v_d$, where $\ellw(w_i)\le
  \ellw(v_j)\le 2\ellw(w_i)$ for each $j$.  Then
  $d\le \ellw(v)/\ellw(w_i)$ and 
  $$\delta_\Gamma([w_i,v_j])\le 9 c \ellw(w_i)^2,$$
  so 
  $$\delta_\Gamma([w_i,v])\le 9 c d \ellw(w_i)^2\le 9 c
  \ellw(v)\ellw(w_i).$$
  On the other hand, if $\ellw(v)< \ellw(w_i)$, then 
  $$\delta_\Gamma([w_i,v])\le 4 c \ellw(w_i)^2.$$
  So we have
  \begin{align*}
    \delta_\Gamma([w_S,v])&\le \sum_i \delta_\Gamma([w_i,v]) \\
    &    \le \sum_i 9 c  \ellw(v)\ellw(w_i)+4 c \ellw(w_i)^2 \\
    &    \le 9 c  \ellw(v)\ellw(w_S)+4 c \ellw(w_S)^2=O((\ellw(w_S)+\ellw(v))^2).
  \end{align*}

  So if $w_T$ is a word in $\SL(S^c;\Z)$, the lemma holds.  Otherwise,
  $w_T$ is a shortcut word in $\SL(S^c;\Z)$, and since $\#S^c\ge 3$,
  we can use Lemma~\ref{lem:shortEquiv} to replace $w_T$ with a word
  in $\SL(S^c;\Z)$ at cost $O(\ellw(w_T)^2).$ The lemma
  follows.
\end{proof}

We use this lemma repeatedly to sort the shortcut words in $w_K$.
\begin{cor}\label{cor:splitWords}
  If $v$ is a word representing the identity which is the product of
  at most $c$ diagonal words, then we can break $v$ into diagonal words
  $v_1,\dots, v_k$ at cost $O(c^2\ell(v)^2)$, where $v_i$ is a shortcut word in
  $\SL(S_i;\Z)$, $\ell(v_1\dots v_k)=\ell(v)$.
\end{cor}
\begin{proof}
  We just need to swap diagonal words in $v$ until all the diagonal
  words in $\SL(S_1;\Z)$ are at the beginning, followed by all the diagonal
  words in $\SL(S_2;\Z)$ and so on.  This takes at most $c^2$ swaps,
  and each swap has cost $O(\ellw(v)^2).$  Since $v$ represents the
  identity, each $v_k$ also represents the identity.
\end{proof}

So we can break the original $w$ into the $v_1,\dots, v_k$ and $w_N$
at cost $O(\ell(w)^2)$, and the $v_i$ each have
$\ell(v_i)=O(\ell(w))$.  To prove the lemma, it just remains to fill
$w_N$.  Recall that $w_N$ is a product of at most $3p^2$ off-diagonal
blocks.
\begin{lemma} \label{lem:shortNP} If $w=w_1\dots w_d$ is a product of
  $d$ off-diagonal blocks which represents the identity, then
  $$\delta_\Gamma(w)=O(d\ellw(w)^2).$$
\end{lemma}
\begin{proof}
  Let $g_i$ be the group element represented by $w_i$.  We will define
  a normal form $\omega_N$ for $N$ and then fill $w$ by bounding
  $$\delta_\Gamma(\omega_N(g_1\dots
  g_{i-1})\omega_N(g_i),\omega_N(g_1\dots g_{i})).$$
  We can combine these fillings into a filling for $w$.
  
  If $m\in N$, we can write $m$ in block upper-triangular form:
  $$m=\begin{pmatrix}
    I  & V_{12} & \dots  & V_{1k} \\
    0     & I     & \dots  & V_{2k} \\
    \vdots & \vdots  & \ddots & \vdots  \\
    0     & 0       & \dots  & I
  \end{pmatrix},$$
  where $V_{ij}\in \Z^{S_i}\otimes \Z^{S_j}$.
  We can decompose this into a product
  $$\omega_N(m)=x_k(m)\dots x_1(m)$$
  where
  $$x_i(m)=\short{n}_{i,i+1}(V_{i,i+1})\dots \short{n}_{i,k}(V_{i,k}).$$
  This is a normal form for elements of $N$; note that it has one term
  for each block above the diagonal.  Furthermore, each subword of $w$
  represents a single off-diagonal block, so there are $a_i$, $b_i$,
  and $V_i$ such that for all $i$,
  $\omega_N(g_i)=\short{n}_{a_i,b_i}(V_{i})$.

  So consider $\omega_N(g_1\dots g_{i-1})\short{n}_{a_i,b_i}(V_{i})$.
  The coefficients of $g_1\dots g_{i-1}$ are all bounded by
  $\exp(\ellw(w))$, so $\omega_N(g_1\dots g_{i-1})$ and
  $n=\short{n}_{a_i,b_i}(V_{i})$ both have length $O(\ellw(w))$.  We
  will transform $\omega_N(g_1\dots g_{i-1})n$ to $\omega_N(g_1\dots
  g_{i})$ by moving $n$ to the left until we can combine it with the
  right term in $\omega_N(g_1\dots g_{i-1})$.  We move $w$ to the left
  by commuting it with other off-diagonal blocks using
  Lemma~\ref{lem:infPres}.  That is, we make replacements:
  $$\short{n}_{ab}(V) n \to \begin{cases}
    n \short{n}_{ab}(V) & \text{if $a\ne b_i$ and $b\ne a_i$} \\
    n \short{n}_{ab}(V) \short{n}_{a,b_i}(VV_{i}) & \text{if $b=a_i$}.\\
    n \short{n}_{ab}(V) \short{n}_{a_i,b}(-V_{i}V) & \text{if $a=b_i$} 
  \end{cases}$$ One can use Lemma~\ref{lem:infPres} to make each of
  these replacements at cost
  $O(\ellw(w)^2)$.  Each replacement moves
  $n$ one term to the left and possibly creates one extra off-diagonal
  block.  We make replacements until $n$ is next to the
  $\short{n}_{a_i,b_i}(V')$-term in $\omega_N(g_1\dots g_{i-1})$, then
  use Lemma~\ref{lem:infPres} to combine the two terms into a single
  off-diagonal block.  Note that since we stop at the
  $\short{n}_{a_i,b_i}(V')$-term, we only move $n$ past
  $\short{n}_{ab}(V)$ when $a\le a_i$, so we never need the third
  case.

  During this process, we have inserted at most $k$ additional
  off-diagonal blocks.  Using a similar replacement process, we can
  move these new blocks to their places in $\omega_N(g_1\dots
  g_{i-1})$.  One can check that this does not add any further new
  blocks and has cost $O(\ellw(w)^2)$.  Thus
  $$\delta_\Gamma(\omega_N(g_1\dots g_{i-1})n,\omega_N(g_1\dots
  g_{i}))=O(\ellw(w)^2).$$
  Therefore,
  $$\delta_\Gamma(w)\le \sum_{i=1}^d \delta_\Gamma(\omega_N(g_1\dots
  g_{i-1})n,\omega_N(g_1\dots g_{i}))=O(d \ellw(w)^2),$$
  as desired.
\end{proof}

\subsection{Case 2: Large $S_i$'s}\label{subsec:specialCase}
The goal of this section is to prove:
\begin{prop}\label{prop:paraReduceLarge}  
  Let $P=U(p-1,1)$.  If $g_1,g_2,g_3\in P$ and $g_1g_2g_3=1$, let
  $$w=\omega(g_1)\omega(g_2)\omega(g_3).$$
  Then we can break $w$ into words $v_1,\dots, v_d$ at cost
  $O(\ell(w)^2)$, where $v_i$ is a shortcut word in $\SL(p-1;\Z)$ and
  $\sum \ell(v_i)^2=O(\ell(w)^2)$.
\end{prop}

Because some of our lemmas give poor bounds in this case, the proof of
Prop.~\ref{prop:paraReduceLarge} is very different from the proof of
Prop.~\ref{prop:paraReduceSmall}.  Instead of using combinatorial
manipulations to construct a filling, we use a variation of the
adaptive triangulation argument used to prove
Lemma~\ref{lem:redPara} to reduce the problem of filling $w$ to the
problem of filling $\omega$-triangles in parabolic subgroups of $P$.
We can then use Prop.~\ref{prop:paraReduceSmall} to fill such triangles.

Since $P$ is a finite-index extension of $\SL(p-1;\Z)\ltimes
\Z^{p-1},$ any word in $P$ which represents the identity can be
reduced to a word in $\SL(p-1;\Z)\ltimes \Z^{p-1}$ at cost linear in
the length of the word.  Dru{\c{t}}u showed that if $p\ge 4$, then the
group $H=\SL(p-1;\R)\ltimes \R^{p-1}$ has a quadratic Dehn function
\cite{DrutuFilling}, but we will need the stronger result that a curve
of length $\ell$ can be filled by a Lipschitz map of a disc of radius $\ell$. 

Let $\cE_H:=H/\SO(p-1)$.  The map $H\to \SL(p-1)$ induces a fibration
of $\cE_H$ over $\cE_{p-1}:=\SL(p-1)/\SO(p-1)$ with fiber $\R^{p-1}$.
Let $m:\cE_H\to \cE_{p-1}$ be this projection map.  If $x\in H$, let
$[x]_{\cE_H}$ be the corresponding point of $\cE_H$.  We will show:
\begin{lemma}\label{lem:drutuLipschitz}
  If $p\ge 4$, there is a $c_0$ such that for any $\gamma:[0,\ell]\to
  \cE_H$ which is a constant-speed parameterization of a closed curve of
  length $\ell$, if $\ell\ge 1$, and if $D^2(\ell):=[0,\ell]\times
  [0,\ell]$, then there is a $c_0$-Lipschitz map $f:D^2(\ell)\to \cE_H$
  which agrees with $\gamma$ on the boundary of $D^2(\ell)$.
\end{lemma}
The proof of this lemma requires some involved geometric and
combinatorial constructions, so we postpone it until the end of this
section.

Assuming the lemma, we can prove Prop.~\ref{prop:paraReduceLarge}.
First, we need to translate the notions in
Section~\ref{sec:depthFunction} to the context of $H$.  Let
$\cM_{p-1}:=\SL(p-1;\Z)\backslash \cE_{p-1}$.  Let $A^+,N^+\subset
\SL(p-1)$ be as in \ref{sec:depthFunction}, so that $N^+A^+$ is a
Siegel set in $\SL(p-1)$ and $\cS_{p-1}:=[N^+A^+]_{\cE_{p-1}}$ is a
fundamental set.  Let $N_H^+\subset H$ be the subset of $\R^{p-1}\in
H$ consisting of vectors with components in $[-1/2,1/2]$, and define
$\cS_H=[N_H^+N^+A^+]_{\cEH}$.  This is a fundamental set for the
action of $P$ on $\cEH$.  Just as we defined the map $\rho:\cE\to
\SL(p;\Z)$ using $\cS$, we can define a map $\rho_H:\cE_H\to P$ such
that $\rho_H(\cS_H)=I$ and $x\in \rho_{H}(x)\cS_H$ for all $x\in
\cE_H$.  Note that, unlike the previous case, $\cS_H$ is not Hausdorff
equivalent to $A^+$.

Define a depth function
$r:\cE_{p-1}\to \R^+$ by letting
$$r(x)=d_{\cM_{p-1}}([x]_{\cM_{p-1}},[I]_{\cM_{p-1}}).$$
Since $\cE_H$ fibers over $\cE_{p-1}$, we can define a depth function
$r_H:\cE_H\to \R^+$ by letting $r_H(x)=r(m(x))$.  

A lemma similar to Lemma~\ref{lem:depthLength} holds:
\begin{lemma}\label{lem:depthLengthH}
  There is a $c$ such that if $x,y\in \cE_H$, then 
  $$d_\Gamma(\rho_H(x),\rho_H(y))\le c(d_\cEH(x,y)+r_H(x)+r_H(y))+c$$
\end{lemma}
\begin{proof}
  If $x\in \cEH$ and $n_1\in N_H^+$, $n_2\in N^+$, and
  $a\in A^+$ are such that $x=[\rho_H(x) n_1n_2a]_\cEH$, then
  $r_H(x)=r([n_2a]_{\cE_{p-1}})$, so 
  $$d_\cEH([\rho_H(x)]_\cEH,x)\le d_H(I,n_1)+r_H(x)=r_H(x)+O(1).$$
  In particular,
  $$d_\cEH([\rho_H(x)]_\cEH,[\rho_H(y)]_\cEH)\le 2 \diam(N_H^+)+r_H(x)+r_H(y)+d_\cEH(x,y),$$
  and since $p\ge 5$, the lemma follows from Thm.~\ref{thm:LMR}.
\end{proof}

In addition, balls which are deep in the cusp are contained in a
translate of a parabolic subgroup.  Specifically,
Cor.~\ref{cor:parabolicNbhds} implies that there is a $c_1>0$ such
that if $x\in \cE_{H}$, $r_H(x)>c_1$, and $B$ is the ball of radius
$\frac{r_H(x)}{4(p-1)^2}$ around $x$, then $\rho_H(B)\subset g
U(q,p-1-q,1)$ for some $1\le q\le p-2$ and some $g\in P$.

We can thus use Cor.~\ref{cor:adaptive} to prove the following (cf.\
Lemma~\ref{lem:templateExist}):
\begin{lem}\label{lem:PTemplateExist}
  Let $p\ge 4$ and let $P=U(p-1,1)$.  There is a $c_2$ such that if
  $w$ is a word in $P$ which represents the identity and $\ell=\ellw(w)$,
  then there is a template $\tau$ for $w$ such that
  \begin{enumerate}
  \item  If $g_1, g_2, g_3\in P$
    are the labels of a triangle in the template, then either
    $\diam\{g_1,g_2,g_3\}\le c_2$ or there is a $q$ such that all of
    the $g_i$ are contained in the same coset of $U(q,p-1-q,1)$.
  \item If $g_1,g_2$ are the labels of an edge $e$ in the template,
    then 
    $$d_\Gamma(g_1,g_2)=O(\ell(e)).$$
  \item $\tau$ has $O(\ell^2)$ triangles, and if the $i$th triangle of
    $\tau$ has vertices labeled $(g_{i1},g_{i2},g_{i3})$, then
    $$\sum_{i}(d_\Gamma(g_{i1},g_{i2})+d_\Gamma(g_{i1},g_{i3})+d_\Gamma(g_{i2},g_{i3}))^2=O(\ell^2).$$

    Similarly, if the $i$th edge of $\tau$ has vertices labeled
    $h_{i1}, h_{i2}$, then 
    $$\sum_{i}d_\Gamma(h_{i1},h_{i2})^2=O(\ell^2).$$
  \end{enumerate}
\end{lem}
\begin{proof}
  The proof of the lemma proceeds in much the same way as the proof
  of Lemma~\ref{lem:templateExist}.  If $t$ is the smallest power of
  $2$ which is greater than $\ell$, we can use
  Lemma~\ref{lem:drutuLipschitz} to construct a Lipschitz map
  $f:[0,t]\times [0,t]\to \cE_H$ which takes the boundary of
  $D:=[0,t]\times [0,t]$ to the curve corresponding to $w$.  We assume
  that this is parameterized so that if $x$ is a lattice point in
  $\partial D$, then $f(x)=[g]_{\cE_H}$ for some $g\in P$.  This map
  has a Lipschitz constant bounded independent of $\ell$, say by
  $c_3$.

  Let $h:D\to \R$,
  $$h(x)=\max\{1,\frac{r_H(f(x))}{8(p-1)^2c_3}\}.$$
  This function is $1$-Lipschitz and we let $\tau_h$ be the
  triangulation of $D$ constructed in Cor.~\ref{cor:adaptive}.
  If $v$ is a vertex of $\tau_h$, we label it $\rho_H(f(v))$; this
  makes $\tau_h$ a template.  

  If $x$ is a lattice point on the boundary of $D$, then $f(x)\in
  [P]_{\cE_H}$ and so $h(x)=1$.  In particular, each lattice point on
  the boundary of $D$ is a vertex of $\tau_h$, so the boundary of
  $\tau_h$ is a $4t$-gon whose vertex labels fellow-travel with $w$.
  We can add $O(t)$ small triangles to $\tau_h$ to get a template
  $\tau$ for $w$.

  To prove that this satisfies the conditions of the lemma, we first need to
  show that if $v_1$ and $v_2$ are the endpoints of an edge of $\tau$,
  labeled by $g_1$ and $g_2$, then $d_{\Gamma}(g_1,g_2)\lesssim
  d(v_1,v_2)$.  By Lemma~\ref{lem:depthLengthH}, we know that 
  $$d_{\Gamma}(g_1,g_2)=O\left(d_{\cE_H}(f(v_1),f(v_2))+r_H(f(v_1))+r_H(f(v_2))+1\right),$$
  and each of these terms is $O(d(v_1,v_2))$, so
  $d_{\Gamma}(g_1,g_2)=O(d(v_1,v_2))$ as desired.

  Part 2 of the lemma then follows from the bounds in
  Cor.~\ref{cor:adaptive}.  Part 1 of the lemma follows from the
  fact that if $x_1$, $x_2$, and $x_3$ are the vertices of a
  triangle of $\tau$, with labels $g_1$, $g_2$, and $g_3$, then
  Cor.~\ref{cor:adaptive} implies that 
  $$\diam\{x_1,x_2,x_3\}\le 2  h(x_1)\le \frac{r_H(f(x))}{4(p-1)^2c_3}.$$
  If $h(x_1)$ is sufficiently large, then
  the remark before the lemma implies that $g_1,g_2$, and $g_3$ are in
  the same coset of $U(q,p-1-q,1)$ for some $j$.
\end{proof}

Proposition~\ref{prop:paraReduceLarge} follows as a corollary:
\begin{proof}[Proof of Prop.~\ref{prop:paraReduceLarge}]
  If $w$ is a word in $P$, let $\tau$ be a template for $w$ satisfying
  the properties in Lemma~\ref{lem:PTemplateExist}.  We can use $\tau$
  to break $w$ into a set of $\omega$-triangles and bigons.  Each of
  these either has bounded length or is a shortcut word in some
  $U(q,p-1-q,1)$.  The ones with bounded length can be filled with
  bounded area.  Since there are at most $O(\ell(w)^2)$ of these, this
  has total cost $O(\ell(w)^2)$.  The shortcut words can be broken
  into smaller pieces by using Prop.~\ref{prop:paraReduceSmall}.  This
  also has total cost $O(\ell(w)^2)$, and the resulting shortcut words
  $v_1,\dots,v_d$ fulfill the conditions of the proposition.
\end{proof}

So, to prove Proposition~\ref{prop:paraReduceLarge}, it suffices to
prove Lemma~\ref{lem:drutuLipschitz}.

\subsection{Proof of Lemma~\ref{lem:drutuLipschitz}}

In this section, we will show that Lipschitz curves in $\cE_H$ can be
filled with Lipschitz discs.  We will proceed by decomposing a loop in
$\cE_H$ into simple pieces.  First, we will recall an argument of
Gromov that in order to fill a Lipschitz loop with a Lipschitz disc,
it suffices to be able to fill a family of Lipschitz triangles with
Lipschitz discs (similar to the arguments using templates in
Section~\ref{sec:sketchProof}).  Since $H$ is a semidirect product,
these triangles can be chosen so that each side is a concatenation of
a geodesic in $\cE_{p-1}:=\SL(p-1;\R)$ and a curve that represents an
element of $\R^{p-1}$.  We complete the proof by filling polygons
whose sides consist of such curves with Lipschitz discs.

In this section, let $D^2(\ell):=[0,\ell]\times [0,\ell]$ and
$D^2:=D^2(1)$, and let $S^1(\ell)$ and $S^1$ be the boundaries of
$D^2(\ell)$ and $D^2$.

The following remarks and lemma about Lipschitz maps between polygons will be
helpful.
\begin{rem} \label{rem:polys}
  If $S$ is a convex polygon with non-empty interior and diameter at most $1$,
  there is a map $S\to D^2$ whose Lipschitz constant varies continuously
  with the vertices of $S$.
\end{rem}

\begin{rem} \label{rem:boundedFill}
  For every $\ell$, there is a $c(\ell)$ such that any closed
  curve in $\cE_H$ of length $\ell$ can be filled by a
  $c(\ell)$-Lipschitz map $D^2(1)\to \cE_H$.  This follows from
  compactness and the homogeneity of $\cE_H$.
\end{rem}

\begin{lem} \label{lem:reparam} Let
  $\gamma:S^1(\ell)\to X$ be a closed curve and let
  $\beta_0:D^2(\ell)\to X$ be a map such that $\beta_0|_{S^1(\ell)}$
  is some reparameterization of $\gamma$.  Then there is a map
  $\beta:D^2(\ell)\to X$ such that $\gamma=\beta|_{S^1(\ell)}$ such that
  $$\Lip(\beta)=O(\max\{\Lip(\beta_0),\Lip(\gamma)\}).$$
\end{lem}
\begin{proof}
  Let $\gamma_0=\beta_0|_{S^1(\ell)}$ and let
  $h:S^1(\ell)\times[0,\ell]\to X$ be a homotopy from $\gamma$ to
  $\gamma_0$.  Since the two curves differ by a reparameterization, we
  can choose $h$ to have Lipschitz constant of order $O(\max\{\Lip
  \gamma_0,\Lip \gamma\})$.  If we glue $\beta_0$ and $h$ together, we
  get a map $\beta_1:D'\to X$, where
  $$D'=\bigl[D^2(\ell) \cup (S^1(\ell)\times[0,\ell])\bigr]/\sim$$
  is the space obtained by identifying $S^1(\ell)\times \{1\}$ and $\partial
  D^2(\ell)$.  This map is a filling of $\gamma$ with Lipschitz constant
  $O(\max\{\Lip \gamma,\Lip \beta_0\})$, and since $D'$ is
  bilipschitz equivalent to $D^2(\ell)$, there is a map $\beta:D^2(\ell)\to X$
  which agrees with $\gamma$ on its boundary and has Lipschitz
  constant $O(\max\{\Lip \gamma,\Lip \beta_0\})$.
\end{proof}

One application of this remark is to convert homotopies to discs; if
$f_1,f_2:[0,\ell]\to X$ are two maps with the same endpoints, and
$h:[0,\ell]\times [0,\ell]\to X$ is a Lipschitz homotopy between $f_1$
and $f_2$ with endpoints fixed, then there is a disc filling
$f_1f_2^{-1}$ with Lipschitz constant $O(\Lip(h))$.

Suppose that $X$ is a metric space and $\omega$ is a normal form for
$X$.  That is, suppose that if $x,y\in X$, then $\omega(x,y):[0,1]\to X$ is a
constant-speed curve connecting $x$ and $y$ and that there is a $c$ such that
$\ell(\omega(x,y))\le c d(x,y)+c$.  (Note that we don't require
$\omega$ to satisfy any fellow-traveler properties.)  We may assume
that $\omega(x,x)$ is a constant curve for each $x$.  Then, just as we
built fillings of curves out of fillings of triangles in
Section~\ref{sec:sketchProof}, we can build Lipschitz discs by gluing
together Lipschitz triangles (cf.\ \cite{GroCC}).  Let $\Delta$ be the
equilateral triangle with side length 1.
\begin{prop}
  Let $X$ be a homogeneous riemannian manifold or a simplicial complex
  with bounded complexity, let $\omega$ be a normal form for $X$.
  Suppose that there is a $c_1$ such that for all $x,y,z\in X$, there
  is a map $f_{x,y,z}:\Delta\to X$ which takes the sides of the
  triangle to $\omega(x,y)$, $\omega(y,z)$, and $\omega(x,z)^{-1}$ and
  such that $\Lip f_{x,y,z}\le c_1 \diam\{x,y,z\}+c_1$.  Then there is
  a $C$ such that for every unit-speed Lipschitz closed curve
  $\gamma:[0,\ell]\to X$ of length $\ell \ge 1$, there is a map
  $g:D^2(\ell)\to X$ which agrees with $\gamma$ on the boundary and
  has $\Lip g\le C$.
\end{prop}
\begin{proof}
  \begin{figure}
    \def\svgwidth{4in}
    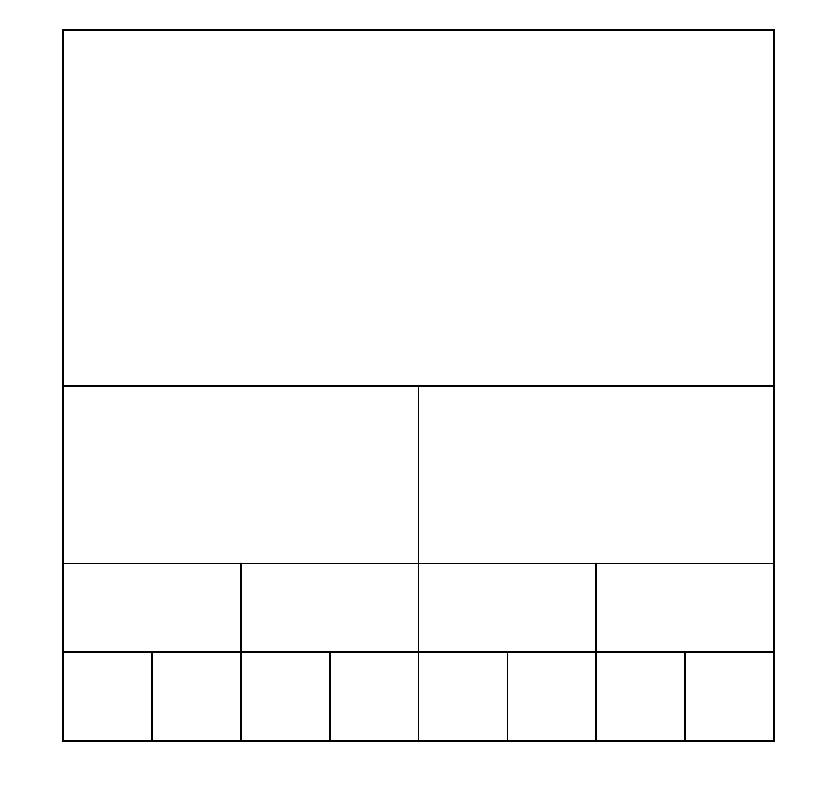
    \caption{\label{fig:paraLipDehn}A filling of $\gamma$ in $X$}
  \end{figure}
  
  We will construct a map $g:[0,\ell]\times[0,\ell]\to X$ which agrees
  with $\gamma$ on one side and stays constant on the other three
  sides.  The construction is essentially Gromov's construction of
  Lipschitz extensions from \cite{GroCC}.  Let $k=\lceil \log_2
  \ell \rceil$.  We will construct $g$ as in Figure~\ref{fig:paraLipDehn}.  The figure depicts a
  decomposition of $[0,\ell]\times [0,\ell]$ into $k+1$ rows of rectangles;
  the top row has one $\ell\times \frac{\ell}{2}$ rectangle, while the $i$th
  from the top consists of $2^{i-1}$ rectangles of dimensions
  $2^{-i+1}\ell\times 2^{-i}\ell$.  The bottom row is an exception, consisting
  of $2^k$ squares of side length $2^{-k}\ell$.  Call the resulting
  complex $D$.

  We label all the edges of $D$ by curves in $X$.  First, we label
  all the vertical edges by constant curves; the vertical
  edges with $x$-coordinate $a$ are labeled by $\gamma(a)$.  We label
  horizontal edges using the normal form:  the edge from $(x_1,y)$ to
  $(x_2,y)$ is labeled by $\omega(\gamma(x_1),\gamma(x_2))$, except for the
  bottom edge of $D$, which is labeled $\gamma$.  We can
  then define $g$ on the $1$-skeleton of $D$ by sending each edge to
  the constant-speed parameterization of its label.  It is easy to
  check that this construction is Lipschitz, with Lipschitz constant
  of order $O(\ell)$.

  Let $R$ be a rectangle in $D$.  If $R$ is in the bottom row of
  cells, then $g$ maps the boundary of $R$ to a curve of length
  bounded independently of $\gamma$, so we may extend $g$ over $R$
  using Remark~\ref{rem:boundedFill}, and all these extensions have
  Lipschitz constant bounded independently of $\gamma$.  Otherwise,
  suppose that $R$ is a $2^{-i+1}\ell\times 2^{-i}\ell$ rectangle.  Then,
  since $g$ maps both its vertical edges to points, the restriction of
  $g$ to the boundary of $R$ is a curve of the form
  $\omega(x,y)\omega(y,z)\omega(x,z)^{-1}$, where $x=\gamma(t)$,
  $y=\gamma(t+2^{-i}\ell)$, and $z=\gamma(t+2^{-i+1}\ell)$.  By assumption,
  there is a map $f_{x,y,z}:\Delta\to X$ which fills this curve and
  has Lipschitz constant $\le c_1 2^{-i-1}\ell +c_1$.  Since $R$ is
  bilipschitz equivalent to $D^2(2^{-i-1}\ell)$, we can reparameterize
  $f_{x,y,z}$ to get a map $R\to X$ which agrees with $g$ on $\partial
  R$ and has Lipschitz constant bounded independently of $\gamma$ and
  $i$.

  Defining extensions like this on every rectangle gives us a map
  $g:[0,\ell]\times [0,\ell]\to X$ whose boundary is a
  reparameterization of $\gamma$ and whose Lipschitz constant is
  bounded independent by some $C_0$ independent of $\gamma$, so the
  proposition follows by applying Lemma~\ref{lem:reparam}.
\end{proof}

Now we construct a normal form $\omega_{H}$ for $\cEH$.  First, for each
$h\in H$, we will construct a curve $\lambda_h:[0,1]\to H$ which
connects $I$ to $h$.  We can write $h$ as
$$h=\begin{pmatrix}
  M & v \\
  0 & 1
\end{pmatrix}$$ for some $M\in \SL(p-1)$ and $v\in \R^{p-1}$; we
denote the corresponding unipotent matrix in $H$ by $u(v)$.  Let
$\gamma_M$ be a geodesic in $\SL(p-1)$ which connects $I$ to $M$.  We
will construct $\lambda_h$ by concatenating $\gamma_M$ and a
curve $\psi(v):[0,1]\to H$ which connects $I$ to $u(v)$.

If $v=0$, let $\psi(v)$ be constant.  If $v\in \R^{p-1},$ $v\ne 0$, we
can write $v=\kappa \bar{v}$, where $\kappa:=\max\{ \|v\|_2,1\}$ and
$\bar{v}:=\frac{v}{\kappa},$ so that $\|\bar{v}\|_2\le 1$ and $0\le \log
\kappa =O(d_H(I,u(v)))$.  Let
$$v_1=\frac{v}{\|v\|_2}, v_2, \dots,v_{p-1}\in \R^{p-1}$$
be an orthonormal basis of $\R^{p-1}$.  Let $D(v)$ be the matrix which
stretches $v_1$ by a factor of $\kappa$ and shrinks the rest of the
$v_i$ by a factor of $\kappa^{1/(p-2)}$.  This is positive definite
and has determinant 1.  Furthermore, $D(v)\bar{v}=v$, so
$u(v)=D(v)u(\bar{v})D(v)^{-1}$.  Let $\D(v)$ be the curve $t\mapsto
D(v)^t$ for $0\le t\le 1$, let $\U(v)$ be the curve $t\mapsto u(tv)$
for $0\le t\le 1$, and define $\psi(v)$ to be the concatenation
$\psi(v)=\D(v)\U(\bar{v})\D(v)^{-1}$.  This has length $O(\blog
\|v\|)$, where $\blog x = \max\{1,\log x\}$.  Define
$\lambda_h=\gamma_M\psi(v)$.  It is easy to check that this satisfies
the desired length bounds.

If $x,y\in \cEH$, choose lifts $\tilde{x}, \tilde{y}$ such that
$[\tilde{x}]_\cEH=x$ and $[\tilde{y}]_\cEH=y$.  Since
$\SO(p-1)$ has bounded diameter,
different choices of lift only differ by a bounded distance.
Define $\omega_H(x,y)$ so that
$$\omega_H(x,y)(t)=[\tilde{x}\lambda_{\tilde{x}^{-1}\tilde{y}}(t)]_\cEH.$$
It is easy to check that this
satisfies the desired length bounds.

Next, we will construct discs filling triangles whose sides are in
normal form.  We claim:
\begin{lemma}\label{lem:LipCombFill}
  If $h_1, h_2\in H$ and
  $$w=\tilde\omega_H(h_1)\tilde\omega_H(h_2)\tilde\omega_H(h_1h_2)^{-1},$$
  then there is a filling $f:D^2\to \cE_H$ of $[w]_\cEH$ such that
  $\Lip(f)=O(d(I,h_1)+d(I,h_2))$.
\end{lemma}
To prove this lemma, we will follow the template of Figure~\ref{fig:LipCombFill}.
\begin{figure}
  \def\svgwidth{2.5in}
  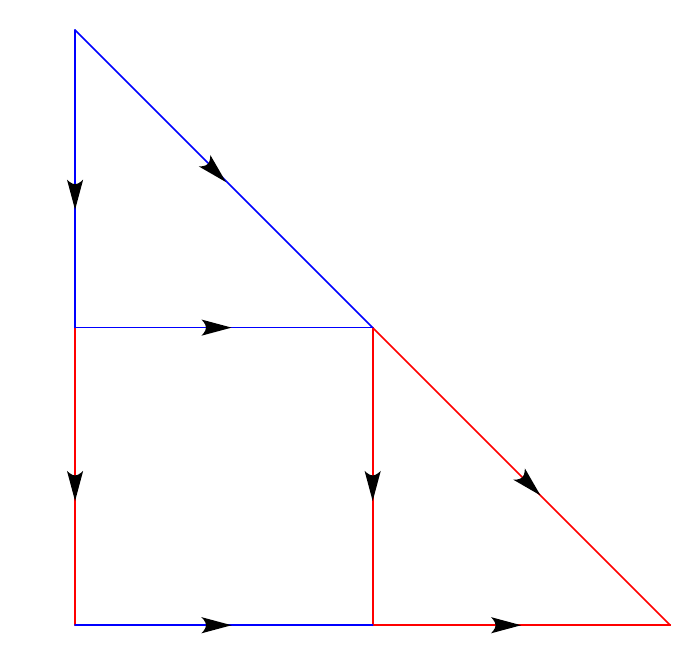
  \caption{\label{fig:LipCombFill}A quadratic filling of
    $\omega_H(M_1v_1)\omega_H(M_2v_2)\omega_H(M_3v_3)^{-1}$}
\end{figure}
The figure suggests that a filling for $w$ can be constructed from
fillings for two triangles and a rectangle.  The following lemmas will
construct those fillings.

\begin{lemma}\label{lem:lipPsiConj}
  Let $\gamma:[0,1]\to \SL(p-1)$ be a curve connecting $I$ and $M$ and
  let $v\in \R^{p-1}$.  There is a map $f:D^2\to \cEH$ which sends
  the boundary of $D^2$ to the curve
  $[\gamma \psi(v)\gamma^{-1} \psi(Mv)^{-1}]_\cEH$ and which has Lipschitz constant
  $\Lip f=O(\blog \|v_1\|_2+\ell(\gamma))$.
\end{lemma}

\begin{lemma}\label{lem:LipShortenedTriFill}
  Let $v_1,v_2\in \R^{p-1}$.  There is a map $f:D^2\to \cEH$ which sends
  the boundary of $D^2$ to the curve
  $[\psi(v_1)\psi(v_2)\psi(v_1+v_2)^{-1}]_\cEH$ which has Lipschitz constant
  $\Lip f=O(\blog \|v_1\|_2+\blog \|v_2\|_2)$.
\end{lemma}

If we assume these two lemmas, then the proof of
Lemma~\ref{lem:LipCombFill} follows easily:
\begin{proof}[Proof of Lemma~\ref{lem:LipCombFill}]
  Let $X$ be a triangle decomposed as in Figure~\ref{fig:LipCombFill}.
  We put a metric on $X$ so that the two small triangles are isoceles
  right triangles with legs of length 1 and the corner rectangle is a
  square with side length 1.  Under this metric, $X$ is bilipschitz
  equivalent to $D^2$.  Let $\ell=\ellc(w)$.  Construct a map on the
  $1$-skeleton of $X$ so that if an edge is labeled by a curve
  $\gamma$ in the figure, it is sent to a constant-speed
  parameterization of $[\gamma]_\cEH$.  It is easy to check that the
  Lipschitz constant of this map is of order $O(\ell)$.  We can use
  Lemmas~\ref{lem:LipShortenedTriFill} and \ref{lem:lipPsiConj} to
  construct maps from the lower right triangle and the corner square
  to $\cEH$ with Lipschitz constants of order $O(\ell)$.

  It only remains to construct a filling of the upper triangle.  The
  boundary of the upper triangle is a curve in
  $\cE_{p-1}=\SL(p-1)/\SO(p-1)$, so we can construct a Lipschitz
  filling of the upper triangle by coning it off by geodesics.  This
  filling also has Lipschitz constant of order $O(\ell)$, completing
  the construction.
\end{proof}

It remains to prove the two lemmas.
\begin{proof}[Proof of Lemma~\ref{lem:lipPsiConj}]
  Let $w=\gamma \psi(v)\gamma^{-1} \psi(Mv)^{-1}$.  If $v=0$, then
  $w=\gamma\gamma^{-1}$, so we may assume that $v\ne 0$.  If
  $\ellc(w)\le 1$, we can use Remark~\ref{rem:boundedFill} to fill
  $w$, so we also assume that $\ellc(w)\ge 1$.

  Recall that $\psi(v)$ is defined as the concatenation
  $\D(v)\U(\bar{v})\D(v)^{-1}$, where $\bar{v}$ is a vector parallel to $v$
  with length at most 1.  We can thus write
  $$w=\gamma \D(v)\U(\bar{v}) \D(v)^{-1}\gamma^{-1} \D(Mv)
  \U(-\overline{Mv})\D(Mv)^{-1}.$$ Let
  $$\gamma'=\D(Mv)^{-1} \gamma \D(v).$$
  Changing the basepoint of $w$, we obtain the curve
  $$w_1=\gamma' \U(\bar{v})(\gamma')^{-1}\U(-\overline{Mv}).$$
  This can be filled by a map of the form
  $$\beta(x,t)=\gamma'(x)u(t\cdot \gamma'(x)^{-1}\overline{Mv})$$
  (see Figure~\ref{fig:lipXiConj:Rect}).  This filling has a foliation
  (horizontal curves in the figure) consisting of curves 
  $\U(\gamma'(x)^{-1}\overline{Mv})$, but these may be exponentially large.
  We will use  a homotopy in $\SL(p-1)$ to replace $\gamma'$ by a
  curve $\sigma$ such that the length of $\sigma(x)^{-1}\overline{Mv}$ is
  always bounded.

  \begin{figure}
    \def\svgwidth{2in}
    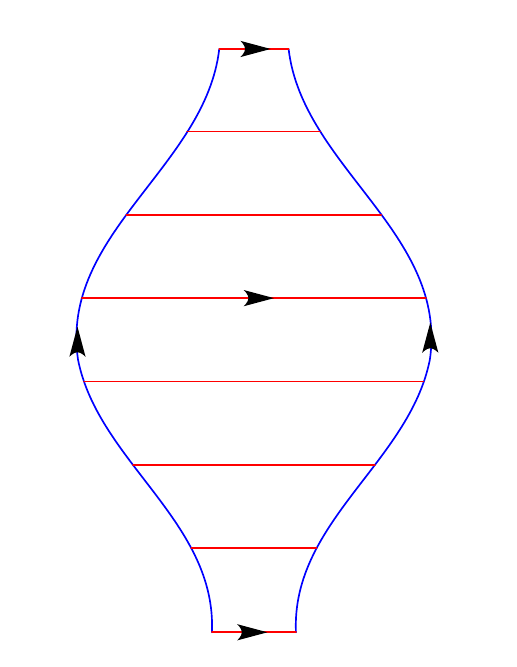
    \caption{\label{fig:lipXiConj:Rect}An exponential filling of $\gamma' \U(\bar{v})(\gamma')^{-1}\U(-\overline{Mv})$}
  \end{figure}

  First, we construct $\sigma$.  Let 
  $$S:=\{m\mid m\in \SL(p-1), \|m^{-1} \overline{Mv}\|_2\le 1\},$$
  and let
  $$M':=D(Mv)^{-1}MD(v)$$
  be the endpoint of $\gamma'$.  Since $\bar{v}=(M')^{-1}\overline{Mv}$, the
  endpoint of $\gamma'$ lies in $S$, and we will construct a
  curve $\sigma$ in $S$ which connects the identity to $M'$.

  Consider the case that $\overline{Mv}=\bar{v}$, so that $M'$ is in
  the stabilizer of $\overline{Mv}$, which we write  $\SL(p-1)_{\overline{Mv}}$.  This stabilizer is contained in
  $S$ and is isomorphic to $\SL(p-2)\ltimes \R^{p-2}$.  Furthermore,
  it is connected and since $p\ge 5$, its inclusion in $\SL(p-1)$ is
  undistorted, so we can let $\sigma$ be a path in
  $\SL(p-1)_{\overline{Mv}}$ between $I$ and $M'$.

  To construct $\sigma$ in the general case, it suffices to construct
  a curve in $S$ which connects $M'$ to a point in $\SL(p-1)_{\overline{Mv}}$;
  we can then apply the previous case.
  If $\|\overline{Mv}\|_2=\|\bar{v}\|_2$, we can take this to be a
  curve in
  $\SO(p-1)$ of bounded length.  Otherwise, we can construct a path of
  matrices that shrink (or grow) $\overline{Mv}$ and grow (or shrink)
  all the
  perpendicular directions; this path can be taken to lie in $S$ and
  its length is at most 
  $$O\left(\left|\log \|\overline{Mv}\|_2-\log \|\bar{v}\|_2\right|\right)\le  O(\log \|M'\|_2)$$

  We will use $\sigma$ to construct a map $f:[0,2\ellc(w)+1]\times
  [0,\ellc(w)]\to \cEH$ whose boundary is a parameterization of $w$.
  The domain of this map is divided into two $\ellc(w)\times \ellc(w)$
  squares and a $\ellc(w)\times 1$ rectangle
  (Fig.~\ref{fig:lipXiConj}); the squares will be homotopies in
  $\SL(p-1)$.  We will map the boundaries of each of these shapes into
  $\cE_H$ by Lipschitz maps, then construct Lipschitz discs in $\cE_H$
  with those boundaries.

  Let $f$ take the $1$-skeleton of the rectangle into $\cEH$ as
  labeled in the figure, parameterizing each edge with constant speed.
  The boundaries of the shapes in the figure are then
  $[\sigma\gamma^{-1}]_{\cEH}$ and $$w_2=[\sigma \U(\bar{v}) \sigma^{-1}
  \U(-\overline{Mv})]_{\cEH}.$$ The first curve, $\sigma\gamma^{-1}$, is a
  closed curve in $\SL(p-1)$ of length $O(\ellc(w))$.  Since
  $\SL(p-1)/\SO(p-1)$ is non-positively curved, the projection to
  $\cEH$ has a filling in $\cE_H$ with area $O(\ellc(w)^2)$.  This can
  be taken to be a $c$-Lipschitz map from $D^2(\ellc(w))$, where $c$
  depends only on $p$.

  The second curve is the
  boundary of a ``thin rectangle''.  That is, there is a Lipschitz map
  $$\rho:[0,\ellc(w)] \times [0,1]\to H$$
  $$\rho(x,t)=\sigma(x)u(t \sigma(x)^{-1} \overline{Mv})=u(t
  \overline{Mv})\sigma(x)$$
  which sends the four sides of the rectangle to $\sigma, \U(\bar{v}),
  \sigma^{-1},$ and $\U(-\overline{Mv})$.  Projecting this disc to $\cEH$
  gives a Lipschitz filling of $w_2.$ 

  We glue these discs together to get a Lipschitz map from the
  rectangle to $\cEH$.  The boundary of the rectangle is a Lipschitz
  reparameterization of $[w]_\cEH$, so we can use Lemma~\ref{lem:reparam} to
  get a filling of $[w]_\cEH$ by a disc $D^2(\ell(w))$ with Lipschitz constant of order $O(1)$.
  Rescaling this gives a filling of $[w]_\cEH$ by the disc $D^2$ with 
  Lipschitz constant of order $O(\ellc(w))$ as desired.

  \begin{figure}
    \def\svgwidth{4in}
    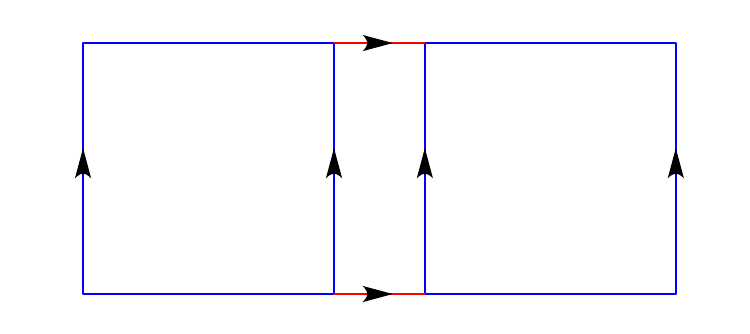
    \caption{\label{fig:lipXiConj}A quadratic filling of $\gamma' \U(\bar{v})(\gamma')^{-1}\U(-\overline{Mv})$}
  \end{figure}
\end{proof}

\begin{proof}[Proof of Lemma~\ref{lem:LipShortenedTriFill}]
  Let $w=\psi(v_1)\psi(v_2)\psi(v_1+v_2)^{-1}$.  As before, we may
  assume that $\ellc(w)>3$.  Let $S=\langle v_1, v_2\rangle\subset
  \R^{p-1}$ be the subspace generated by the $v_i$ and let
  $\lambda=\max\{\|v_1\|_2, \|v_2\|_2,\|v_1+v_2\|_2\}$.  Let $D\in
  \SL(p-1)$ be the matrix such that $Ds=\lambda s$ for $s\in S$ and
  $Dt=\lambda^{-1/(p-1-\dim(S))} t$ for vectors $t$ which are
  perpendicular to $S$; this is possible because $\dim(S)\le 2$ and
  $p\ge 5$.

  Let $\gamma_D$ be the curve $t\mapsto D^t$ for $0\le t\le 1$; this has
  length $O(\log \lambda)=O(\ellc(w))$.  We construct a filling of
  $[w]_\cEH$ based on a triangle with side length $\ellc(w)$ as in
  Figure~\ref{fig:shortTri}.  The central triangle in the figure has
  side length $1$; since $\ellc(w)\ge 3$, the trapezoids around the
  outside are bilipschitz equivalent to discs $D^2(\ell)$ with
  Lipschitz constant bounded independently of $w$.  Let $f$ take each
  edge to $H$ as labeled, and give each edge a constant-speed
  parameterization; $f$ is Lipschitz on each edge, with a Lipschitz
  constant independent of the $v_i$.  Let $\bar{f}$ be the projection
  of $f$ to $\cEH$.  We've defined $\bar{f}$ on the edges in the
  figure; it remains to extend it to the interior of each cell.
  
  The map $\bar{f}$ sends the boundary of the center triangle to a
  curve of length at most 3, so we can use Rem.~\ref{rem:boundedFill}
  to extend $\bar{f}$ to its interior.  The map $\bar{f}$ sends the
  boundary of each trapezoid to a curve of the form
  \begin{equation}[\psi(v_i)^{-1}\gamma_D \U(\lambda
    v_i)\gamma_D^{-1}]_\cEH.
  \end{equation}
  Lemma~\ref{lem:lipPsiConj} gives Lipschitz discs filling such curves.
  Each of these fillings has Lipschitz constant bounded independently
  of $w$, so the resulting map on the triangle is a filling of
  $[w]_\cEH$ by a triangle of side length $\ellc(w)$ with Lipschitz
  constant bounded independently of $w$.  By rescaling and mapping the
  triangle to $D^2$, we obtain a filling of $[w]_\cEH$ by the disc
  $D^2$ with 
  Lipschitz constant of order $O(\ellc(w))$ as desired.

  \begin{figure}
    \def\svgwidth{3in}
    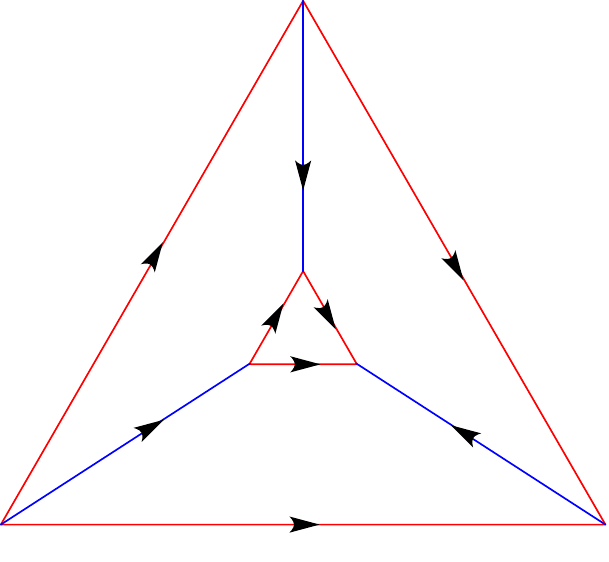
    \caption{\label{fig:shortTri}A quadratic filling of $\psi(v_1)\psi(v_2)\psi(v_1+v_2)^{-1}$}
  \end{figure}
\end{proof}

\section{The base case: $\SL(2;\Z)$}\label{sec:baseCaseProof}

In this section, we will prove Lemma~\ref{lem:baseCase},  which states
that if $w$ is a shortcut word in $\SL(2;\Z)$, then
$$\delta_\Gamma(w)=O(\ell^2).$$

The proof uses the adaptive template methods developed in
Sec.~\ref{sec:redParaProof}.  The main change from
Sec.~\ref{sec:redParaProof} is that the curve that we fill will not be
in the thick part of $\cE_2$.

  Let $w=w_1\dots w_n$ be a shortcut word representing the identity,
  where each $w_i$ is either a diagonal matrix in $\SL(2;\Z)$ or a
  shortcut of the form $\short{e}_{12}(x)$ or $\short{e}_{21}(x)$.  
  We first use Lemma~\ref{lem:infPres} to replace all occurrences of
  $\short{e}_{21}(x)$ in $w$ by $g \short{e}_{12}(-x)g^{-1}$, where $g$ is a
  word representing a Weyl group element.  This has cost
  $O(\ellw(w)^2)$, and it lets us assume that $\short{e}_{21}(x)$ does
  not occur in $w$ for $|x|\ge 1$.

  For each $i$, let $w(i)$ be the group element represented by
  $w_1\dots w_i$.  Let $\cS_2$ be a Siegel set for $\SL(2;\Z)$.
  For each $i$, we will construct a curve
  $\alpha_i:[0,\ellw(w_i)]\to \cE_2$ which connects $[w(i)]_{\cE_2}$ to
  $[w(i+1)]_{\cE_2}$ such that:
  \begin{itemize}
  \item The curves $\alpha_i$ are uniformly Lipschitz, with Lipschitz
    constants bounded independently of $w$.
  \item There is an integer $t_i\in [0,\ellw(w_i)]$ such that
    if $0\le j\le t_i$ is an integer, then $\alpha_i(j)\in w(i) \cS_2$ and 
    if $t_i<  j\le \ellw(w_i)$, then $\alpha_i(j)\in w(i+1) \cS_2$.
  \end{itemize}
  If $\ellw(w_i)< 3$, we define $\alpha_i$ on $[0,1]$ as the geodesic connecting
  $[w(i)]_{\cE_2}$ and $[w(i+1)]_{\cE_2}$ and we define $\alpha_i$ on
  $[1,\ell(w_i)]$ to be the constant value $[w(i+1)]_{\cE_2}$.  We
  let $t_i=0$.

  If $\ellw(w_i)\ge 3$, let $x$ be such that
  $w_i=\short{e}_{12}(x)$.  Let
  $$D=\begin{pmatrix}e & 0 \\
    0 & e^{-1}
  \end{pmatrix}$$
  and note that $[D^x]_{\cE_2}\in \cS_2$ for all $x\ge 0$.  Let
  $t_i=\left\lceil \frac{\ellw(w_i)}{3}\right\rceil$, and let
  $\beta:[0,\ellw(w_i)]\to\SL(2;\R)$ be the concatenation of
  geodesic segments connecting
  \begin{align*}
    p_1&=w(i)\\
    p_2&=w(i)D^{\log(|x|)/2}\\
    p_3&=w(i)D^{\log(|x|)/2}e_{12}(\pm 1)\\
    p_4&=w(i)D^{\log(|x|)/2}e_{12}(\pm 1)D^{-\log(|x|)/2}=w(i)e_{12}(x)=w(i+1).
  \end{align*}
  Here the sign of $\pm1$ is the same as the sign of $x$.
  Parameterize this curve so that $\beta|_{[0,t_1]}$ connects $p_1$
  and $p_2$, $g|_{[t_1,t_1+1]}$ connects $p_2$ and $p_3$, and
  $g|_{[t_1+1,\ellw(w_i)]}$ connects $p_3$ and $p_4$.  Let
  $\alpha_i=[\beta]_{\cE_2}$.  This curve has velocity bounded
  independently of $x$.  Furthermore, if $t\in \Z$, then
  $\alpha_i(t)\in w(i)\cS_2$ if $t\le t_1$ and $\alpha_i(t)\in w(i+1)
  \cS_2$ if $t> t_1$.

  Let $\alpha:[0,\ellw(w)]\to \cE_2$ be the concatenation of
  the $\alpha_i$.  From here, we largely follow the proof of
  Lemma~\ref{lem:templateExist}; we construct a filling $f$ of
  $\alpha$, an adaptive triangulation $\tau$, and a template based on
  $\tau$ so that a vertex $x$ of $\tau$ is labeled by an element
  $\gamma$ such that $f(x)\in \gamma \cS_2$.

  Let $d$ be the smallest power of $2$ larger than $\ellw(w)$ and let
  $\alpha':[0,d]\to \cE_2$ be the extension of $\alpha$ to $[0,d]$,
  where $\alpha'(t)=[I]_{\cE_2}$ when $t\ge \ellw(w)$.  Let
  $D^2(d)=[0,d]\times [0,d]$.  We can map $\partial D^2(d)$ into
  $\cE_2$ by sending one side to $\alpha'$ and sending the other three
  sides to $[I]_{\cE_2}$, and since $\cE_2$ is nonpositively curved,
  we can extend this map to all of $D^2(d)$ by coning it to a point
  along geodesics.  Call the resulting map $f:D^2(d)\to \cE_2$.  This
  is $c$-Lipschitz for some $c$ independent of $w$ and has area
  $O(\ellw(w)^2)$.

  Let $\cM_2=\SL(2;\Z)\backslash \cE_2$ and define the depth
  function $r:\cE_2\to \R+$ by
  $$r(x)=d_{\cM_2}([x]_{\cM_2},[I]_{\cM_2})$$
  Let $h:[0,d] \times [0,d]$ be 
  $$h(x)=\max\{1,\frac{r(f(x))}{32 c}\}$$
  This is a 1-Lipschitz function, so we can use
  Cor.~\ref{cor:adaptive} to construct a triangulation $\tau_h$ of
  $[0,d]\times [0,d]$.  It remains to convert this triangulation into
  a template.  
  
  For each vertex $v$ of $\tau_h$, we label $v$ by an element $g\in
  \SL(2;\Z)$ such that $f(v)\in g\cS_2$.  For the interior vertices,
  any such element suffices.  For the boundary vertices, we must make
  choices that agree with $w$.  Let $\ell_i=\sum_{j=1}^i \ellw(w_j)$
  so that $\alpha_i$ and $\alpha_{i+1}$ meet at $(\ell_i,0)$.  We have
  $\alpha'(\ell_i)=[w(i)]_{\cE_2}$, so $h(\ell_i,0)=1$ and
  $(\ell_1,0)$ must be a vertex of $\tau_h$; we label it $w(i)$.  Let
  $\beta_0=0$, $\beta_i=\ell_{i-1}+t_{i}$ for $i=1,\dots,n$, and
  $\beta_{n+1}=d$, so that if $\beta_i < t\le \beta_{i+1}$, then
  $f(t,0)=\alpha'(t)\in w(i)\cS_2$.  For all $t$ with $\beta_i < t\le
  \beta_{i+1}$, label the point $(t,0)$ by the element $w(i)$.
  Boundary vertices that are not of the form $(t,0)$ are all sent to
  $[I]_{\cE_2}$ under $f$, and we label them by $I$.  With this
  labeling, the boundary word of $\tau_h$ is $w$.

  A filling of the triangles in $\tau_h$ thus gives a filling of $w$.
  As in Lemma~\ref{lem:templateExist}, each triangle of $\tau_h$ either has
  short edges and thus a bounded filling area, or has vertices whose
  labels lie in a translate of a parabolic subgroup.  In this case,
  that parabolic subgroup must be $U(1,1)$, and
  Lemma~\ref{lem:infPres} allows us to fill any such triangle with
  quadratic area.  Cor.~\ref{cor:adaptive}.(3) thus implies that 
  $\delta(w)=O(\ellw(w)^2)$, as desired.

\section{Open questions}\label{sec:open}

One natural open question is whether these results can be extended to
a proof of Conj.~\ref{conj:mainConj}, or, as an important special
case, whether they can be used to find a bound on the Dehn function of
$\SL(4;\Z)$.  Some parts of the proof, especially the geometric lemmas
in Sec.~\ref{sec:redParaProof} extend naturally to other lattices in
semisimple groups.  That is, if $\Gamma$ acts on a symmetric space
$\cE$, one can define a fundamental set $\cS$ which is a union of
Siegel sets, use $\cS$ to define a map $\rho:\cE\to \Gamma$, and show
that if $x$ and $y$ are close together and deep in a cusp, then
$\rho(x)$ and $\rho(y)$ lie in a coset of some parabolic subgroup.
Using this fact, one can find various ways to construct templates
whose triangles have vertices lying in parabolic subgroups.

For $\SL(p;\Z)$, we filled these triangles using combinatorial lemmas,
but these lemmas are hard to generalize to other groups.  In general,
appropriate analogues of Lemma~\ref{lem:infPres} and
Lemma~\ref{lem:shortEquiv} should lead to a polynomial bound
on the Dehn function of a lattice.  One way to get such a bound is to
use a template consisting of simplices all of size $\sim 1$, as in
\cite{YoungQuart}.  In this case, each edge can be labeled by a group
element which lies in a parabolic subgroup.  By the Langlands
decomposition, this parabolic subgroup has a reductive part and a
unipotent part, and the group element is the product of a bounded
element of the reductive part and an exponentially large unipotent
element.  Lemmas which conjugate unipotent elements by reductive
elements will then suffice to fill the resulting $\omega$-triangles.

For $\SL(4;\Z)$, we can say a little more.  One of the main advantages
of using $\SL(p;\Z)$ here is that when $p$ is large, it contains many
solvable subgroups (the $H_{S,T}$'s defined in
Sec.~\ref{sec:normalForms}) with quadratic Dehn functions and large
intersections -- this is one thing allowing us to prove, for example
Lemma~\ref{lem:shortEquiv}.  This gets more difficult for small $p$ because
the solvable groups and their intersections get smaller.

For example, when $p\ge 6$, Lemma~\ref{lem:shortEquiv} can be proved
in a few lines: Let $\gamma_{S,T}$ be a geodesic connecting $I$ and
$e_{1,6}(x)$ in $H_{S,T}$.  As long as $\#S\ge 2$ or $\#T\ge 2$, this
has length $\sim \log |x|$.  We can then construct a homotopy from,
say, $\gamma_{\{1,2,3\},\{6\}}$ to $\gamma_{\{1,3,4\},\{6\}}$ which
goes through the stages
$$\gamma_{\{1,2,3\},\{6\}}\to \gamma_{\{1\},\{5,6\}}\to
\gamma_{\{1,3,4\},\{6\}}.$$ In the first step, we use the fact that
both curves lie in $H_{\{1,2,3\},\{5,6\}}$, which has quadratic Dehn
function; likewise, in the second step, we use the fact that both
curves lie in $H_{\{1,3,4\},\{5,6\}}$.  The full lemma can be proved
in the same way.  When $p=5$, however, the lemma is more difficult to
prove, because the overlaps between solvable subgroups are smaller,
and when $p=4$, the lemma is not known.  In fact,
Lemma~\ref{lem:shortEquiv} is the main obstacle to proving a
polynomial bound on the Dehn function of $\SL(4;\Z)$.  In unpublished
work, I have reduced the problem of bounding the Dehn function of the
whole group by a polynomial to the problem of proving that
$\delta_{\SL(4;\Z)}(\short{e}_{ij}(x),\short{e}_{ij;S}(x))$ is bounded
by a polynomial in the length of $\short{e}_{ij}(x)$.

Similarly, one may ask about higher-order filling inequalities in
arithmetic groups.  These filling inequalities generalize the Dehn
function, but instead of bounding the area of a disc filling a curve
$\gamma$ of a given length, they bound the $(k+1)$-volume of a chain
filling a cycle of a given $k$-volume.  Gromov 
stated a generalization of Conj.\ref{conj:mainConj} to this situation
\begin{conj}\label{conj:higherConj}
  If $\Gamma$ is an irreducible lattice in a symmetric space with $\R$-rank at
  least $k+2$, then any $k$-cycle of volume $V$ has a filling by a
  $k$-chain of volume polynomial in $V$.
\end{conj}
Bestvina, Eskin, and Wortman \cite{BestEskWort} have made partial
progress toward a more general conjecture stated in terms of volume
distortion in lattices.  

\bibliography{slnz}

\newcommand{\etalchar}[1]{$^{#1}$}
\def\cprime{$'$}
\begin{thebibliography}{ECH{\etalchar{+}}92}

\bibitem[Beh79]{Behr77}
Helmut Behr.
\newblock {${\rm SL}_{3}({\bf F}_{q}[t])$} is not finitely presentable.
\newblock In {\em Homological group theory ({P}roc. {S}ympos., {D}urham,
  1977)}, volume~36 of {\em London Math. Soc. Lecture Note Ser.}, pages
  213--224. Cambridge Univ. Press, Cambridge, 1979.

\bibitem[BEW]{BestEskWort}
M.~Bestvina, A.~Eskin, and K.~Wortman.
\newblock {Filling boundaries of coarse manifolds in semisimple and solvable
  arithmetic groups}, arXiv:1106.0162.

\bibitem[BHC62]{BorHar-Cha}
A.~Borel and Harish-Chandra.
\newblock Arithmetic subgroups of algebraic groups.
\newblock {\em Ann. of Math. (2)}, 75:485--535, 1962.

\bibitem[Bri02]{Bridson}
M.~R. Bridson.
\newblock The geometry of the word problem.
\newblock In {\em Invitations to geometry and topology}, volume~7 of {\em Oxf.
  Grad. Texts Math.}, pages 29--91. Oxford Univ. Press, Oxford, 2002.

\bibitem[BW07]{BuxWortman}
Kai-Uwe Bux and Kevin Wortman.
\newblock Finiteness properties of arithmetic groups over function fields.
\newblock {\em Invent. Math.}, 167(2):355--378, 2007.

\bibitem[dCT10]{deCorTess}
Yves de~Cornulier and Romain Tessera.
\newblock Metabelian groups with quadratic {D}ehn function and
  {B}aumslag-{S}olitar groups.
\newblock {\em Confluentes Math.}, 2(4):431--443, 2010.

\bibitem[Din94]{Ding}
J.~T. Ding.
\newblock A proof of a conjecture of {C}. {L}. {S}iegel.
\newblock {\em J. Number Theory}, 46(1):1--11, 1994.

\bibitem[Dru04]{DrutuFilling}
C.~Dru{\c{t}}u.
\newblock Filling in solvable groups and in lattices in semisimple groups.
\newblock {\em Topology}, 43(5):983--1033, 2004.

\bibitem[ECH{\etalchar{+}}92]{ECHLPT}
D.~B.~A. Epstein, J.~W. Cannon, D.~F. Holt, S.~V.~F. Levy, M.~S. Paterson, and
  W.~P. Thurston.
\newblock {\em Word processing in groups}.
\newblock Jones and Bartlett Publishers, Boston, MA, 1992.

\bibitem[Ger93]{GerstenSurv}
Steve~M. Gersten.
\newblock Isoperimetric and isodiametric functions of finite presentations.
\newblock In {\em Geometric group theory, {V}ol.\ 1 ({S}ussex, 1991)}, volume
  181 of {\em London Math. Soc. Lecture Note Ser.}, pages 79--96. Cambridge
  Univ. Press, Cambridge, 1993.

\bibitem[Gro]{GroftA}
C.~Groft.
\newblock Generalized {D}ehn functions {I}, arXiv:0901.2303v1.

\bibitem[Gro93]{GroAII}
M.~Gromov.
\newblock Asymptotic invariants of infinite groups.
\newblock In {\em Geometric group theory, Vol.\ 2 (Sussex, 1991)}, volume 182
  of {\em London Math. Soc. Lecture Note Ser.}, pages 1--295. Cambridge Univ.
  Press, Cambridge, 1993.

\bibitem[Gro96]{GroCC}
M.~Gromov.
\newblock Carnot-{C}arath\'eodory spaces seen from within.
\newblock In {\em Sub-Riemannian geometry}, volume 144 of {\em Progr. Math.},
  pages 79--323. Birkh\"auser, Basel, 1996.

\bibitem[Ji98]{Ji}
L.~Ji.
\newblock Metric compactifications of locally symmetric spaces.
\newblock {\em Internat. J. Math.}, 9(4):465--491, 1998.

\bibitem[JM02]{JiMacPherson}
L.~Ji and R.~MacPherson.
\newblock Geometry of compactifications of locally symmetric spaces.
\newblock {\em Ann. Inst. Fourier (Grenoble)}, 52(2):457--559, 2002.

\bibitem[Leu04a]{LeuzingerPolyRet}
E.~Leuzinger.
\newblock On polyhedral retracts and compactifications of locally symmetric
  spaces.
\newblock {\em Differential Geom. Appl.}, 20(3):293--318, 2004.

\bibitem[Leu04b]{Leuzinger}
E.~Leuzinger.
\newblock Tits geometry, arithmetic groups, and the proof of a conjecture of
  {S}iegel.
\newblock {\em J. Lie Theory}, 14(2):317--338, 2004.

\bibitem[LMR93]{LMRComptes}
A.~Lubotzky, S.~Mozes, and M.~S. Raghunathan.
\newblock Cyclic subgroups of exponential growth and metrics on discrete
  groups.
\newblock {\em C. R. Acad. Sci. Paris S\'er. I Math.}, 317(8):735--740, 1993.

\bibitem[LP96]{LeuzPitRk2}
E.~Leuzinger and Ch. Pittet.
\newblock Isoperimetric inequalities for lattices in semisimple {L}ie groups of
  rank {$2$}.
\newblock {\em Geom. Funct. Anal.}, 6(3):489--511, 1996.

\bibitem[LP04]{LeuzPitQuad}
E.~Leuzinger and Ch. Pittet.
\newblock On quadratic {D}ehn functions.
\newblock {\em Math. Z.}, 248(4):725--755, 2004.

\bibitem[Mil71]{Milnor}
J.~Milnor.
\newblock {\em Introduction to algebraic {$K$}-theory}.
\newblock Princeton University Press, Princeton, N.J., 1971.
\newblock Annals of Mathematics Studies, No. 72.

\bibitem[Ril05]{RileyNav}
T.~R. Riley.
\newblock Navigating in the {C}ayley graphs of {${\rm SL}_N(\Bbb Z)$} and
  {${\rm SL}_N(\Bbb F_p)$}.
\newblock {\em Geom. Dedicata}, 113:215--229, 2005.

\bibitem[Ste62]{Steinberg}
Robert Steinberg.
\newblock G\'en\'erateurs, relations et rev\^etements de groupes alg\'ebriques.
\newblock In {\em Colloq. {T}h\'eorie des {G}roupes {A}lg\'ebriques
  ({B}ruxelles, 1962)}, pages 113--127. Librairie Universitaire, Louvain, 1962.

\bibitem[You]{YoungQuart}
R.~Young.
\newblock {A polynomial isoperimetric inequality for SL(n,Z)},
  arXiv:math.GR/0903.2495.

\end{thebibliography}
\end{document}